\newtheorem{theorem}{Theorem}
\newtheorem{assumption}{Assumption}
\newtheorem{corollary}{Corollary}
\newtheorem{definition}{Definition}
\newtheorem{example}{Example}
\newtheorem{lemma}{Lemma}
\newenvironment{proof}[1][Proof]{\textbf{#1.} }{\ \rule{0.5em}{0.5em}}
\begin{document}

\title{On progressive filtration expansion with a process}
\author{Younes Kchia\thanks{%
Centre de Math\'ematiques Appliqu\'ees, Ecole Polytechnique, Paris} \and %
Philip Protter\thanks{%
Statistics Department, Columbia University, New York, NY, 10027} 
\thanks{%
Supported in part by NSF grant DMS-0906995}} 
\date{\today}
\maketitle

\begin{abstract}
In this paper we study progressive filtration expansions with c\`adl\`ag processes. Using results from the weak convergence of $\sigma$-fields theory, we first establish a semimartingale convergence theorem. Then we apply it in a filtration expansion with a process setting and provide sufficient conditions for a semimartingale of the base filtration to remain a semimartingale in the expanded filtration. Finally, an application to the expansion of a Brownian filtration with a time reversed diffusion is given through a detailed study.
\end{abstract}

\section{Introduction}

One of the key insights of K. It\^o when he developed the It\^o integral was to restrict the space of integrands to what we now call predictable processes.  This allowed the integral to have a type of bounded convergence theorem that N. Wiener was unable to obtain with unrestricted random integrands. The It\^o integral has since been extended to general semimartingales.  If one tries however to expand (i.e. to enlarge) the filtration, then one is playing with fire, and one may lose the key properties It\^o originally obtained with his restriction to predictable processes.  In the 1980's a theory of such filtration expansions  was nevertheless successfully developed for two types of expansion:  initial expansions and progressive expansions; see for instance \cite{Jacod:1987} and \cite{Jeulin/Yor:1985}, or the more recent partial exposition in~\cite[Chapter VI]{Protter:2005}. The initial expansion of a filtration $\mathbb F = (\mathcal F_t)_{t\geq 0}$ with a random variable $\tau$ is the filtration $\mathbb H$ obtained as the right-continuous modification of $(\mathcal F_t \vee \sigma (\tau))_{t\geq 0}$. The progressive expansion $\mathbb G$ is obtained as any right-continuous filtration containing $\mathbb F$ and making $\tau$ a stopping time. When referring to the progressive expansion with a random variable in this paper, we mean the smallest such filtration. One is usually interested in the cases where $\mathbb F$~semimartingales remain semimartingales in the expanded filtrations and in their decompositions when viewed as semimartingales in the expanded filtrations.  The subject has regained interest recently, due to applications in Mathematical Finance, as exemplified (for example) in the work of Jeanblanc and Le Cam~\cite{Jeanblanc/LeCam:2009}.

In this article we go beyond the simple cases of initial expansion and progressive expansion with a random variable.  Instead we consider the (more complicated) case of expansion of a filtration through dynamic enlargement, by adding a stochastic process as it evolves simultaneously to the evolution of the original process.  In order to do this, we begin with simple cases where we add marked point processes, and then we use the theory of the convergence of sigma fields recently developed by Antonelli, Coquet, Kohatsu-Higa, Mackevicius, M\'emin, and Slominski (see~\cite{AKH},\cite{Coquet},\cite{Coquet0}) to obtain more sophisticated enlargement possibilities.  We combine the convergence results with an extension of an old result of Barlow and Protter~\cite{BarlowProtter}, finally obtaining the key results, which include the forms of the semimartingale decompositions in the enlarged filtrations.  We then apply these results to an example where we enlarge the filtration with another process which is evolving backwards in time.  To do this we need to use density estimates inspired by the work of Bally and Talay~\cite{BallyTalay}.

The techniques developed in this paper require a long preliminary treatment of the convergence of $\sigma$ fields, and to a lesser extent the convergence of filtrations.  This delays the key theorems such that they occur rather late in the paper, so perhaps it is wise to indicate that the main results of interest (in the authors' opinion) are Theorems~\ref{T:exp} and~\ref{T:semimgX}, which show how one can expand filtrations with processes and have semimartingales remain semimartingales in the enlarged filtrations.  The authors also wish to mention here that the example provided in Theorem~\ref{T:diff} shows how the hypotheses  (perhaps a bit strange at first glance) of Theorem~\ref{T:semimgX}  can arise naturally in applications, and it shows the potential utility of the results of this paper.   That said, the preliminary results on the weak convergence of $\sigma$ fields has an interest in their own right.  

\subsection{Previous Results}

For the initial expansion $\mathbb H$ of a filtration $\mathbb F = (\mathcal F_t)_{t\geq 0}$ with a random variable $\tau$, 
one well-known situation is when \textit{Jacod's criterion} is satisfied (see \cite{Jacod:1987} or alternatively~\cite[Theorem 10, p. 371]{Protter:2005}), and as far as one is concerned by the progressive expansion, filtration~$\mathbb G$, this always holds up to the random time $\tau$ as proved by Jeulin and Yor and holds on all $[0,\infty)$ for honest times (see \cite{Jeulin/Yor:1985}). In both \cite{KLP} and \cite{Jeanblanc/LeCam:2009}, this is proved to hold also for random times satisfying \textit{Jacod's criterion}. In \cite{KLP}, the authors link the two previous types of expansions and are able to provide similar results for more general types of expansion of filtrations. They extend for instance these results to the multiple time case, without any restrictions on the ordering of the individual times and more importantly to the filtration expanded by a counting process $N_t^n = \sum_{i=1}^n X_i 1_{\{\tau_i \leq t\}}$, i.e.~the smallest right-continuous filtration containing $\mathbb F$ and to which the process $N^n$ is adapted. 

For a given filtration $\mathbb F$ and a given c\`adl\`ag process $X$, the smallest right-continuous filtration containing $\mathbb F$ and to which $X$ is adapted will be called the progressive expansion of $\mathbb F$ with $X$. In this paper we pursue the analysis started in \cite{KLP} and investigate the stability of the semimartingale property of $\mathbb F$~semimartingales in progressive expansions of $\mathbb F$ with c\`adl\`ag processes $X$. We apply the results in \cite{KLP} together with results from the theory of weak convergence of $\sigma$-fields (see \cite{Coquet} and \cite{Coquet0}) to obtain a general criterion that guarantees this property, at least for $\mathbb F$~semimartingales satisfying suitable integrability assumptions. Hoover~\cite{Hoover}, following remarks by M. Barlow and S. Jacka, introduced the weak convergence of $\sigma$-fields and of filtrations in 1991.  The next big step was in 2000 with the seminal paper of Antonelli and Kohatsu-Higa\cite{AKH}.  This was quickly followed by the work of Coquet, M\'emin and Mackevicius \cite{Coquet0} and by Coquet, M\'emin and Slominsky \cite{Coquet}. We will recall fundamental results on the topic but we refer the interested reader to \cite{Coquet} and \cite{Coquet0} for details. In these papers, all filtrations are indexed by a compact time interval $[0,T]$. We work within the same framework and assume that a probability space $(\Omega, \mathcal H, P)$ and a positive integer $T$ are given. All filtrations considered in this paper are assumed to be completed by the $P$-null sets of $\mathcal H$. By the natural filtration of a process $X$, we mean the right-continuous filtration associated to the natural filtration of $X$. The concepts of weak convergence of $\sigma$-fields and of filtrations rely on the topology imposed on the space of c\`adl\`ag processes and we use the Skorohod $J_1$ topology as it is done in~\cite{Coquet}. 

\subsection{Outline}

An outline of this paper is the following. In section 2, we recall basic facts on the weak convergence of $\sigma$-fields and establish fundamental lemmas for subsequent use. The last subsection provides a sufficient condition for the semimartingale property to hold for a given c\`adl\`ag adapted process based on the weak convergence of $\sigma$-fields. The sufficient condition we provide at this point is unlikely to hold in a filtration expansion context, however the proof of this result underlines what can go wrong under the more natural assumptions considered in the next section.

Section 3 extends the main theorem in \cite{BarlowProtter} and proves a general result on the convergence of $\mathbb G^n$~special semimartingales to a $\mathbb G$~adapted process $X$, where $(\mathbb G^n)_{n\geq 1}$ and $\mathbb G$ are filtrations such that $\mathcal G^n_t$ converges weakly to $\mathcal G_t$ for each $t\geq 0$. The process $X$ is proved to be a $\mathbb G$~special semimartingale under sufficient conditions on the regularity of the local martingale and finite variation parts of the $\mathbb G^n$~semimartingales. This is then applied to the case where the filtrations $\mathbb G^n$ are obtained by progressively expanding a base filtration $\mathbb F$ with processes $N^n$ converging in probability to some process $N$. We provide sufficient conditions for an $\mathbb F$~semimartingale to remain a $\mathbb G$~semimartingale, where $\mathbb G$ is the progressive expansion of $\mathbb F$ with $N$. 

Section 4 applies the results obtained in Section 3 to the case where the base filtration $\mathbb F$ is progressively expanded by a c\`adl\`ag process whose increments satisfy a \textit{generalized Jacod's criterion} with respect to the filtration $\mathbb F$ along \textit{some} sequence of subdivisions whose mesh tends to zero. An application to the expansion of a Brownian filtration with a time reversed diffusion is given through a detailed study, and the canonical decomposition of the Brownian motion in the expanded filtration is provided. Finally, a possible application to stochastic volatility models is suggested.

\section{Weak convergence of $\sigma$-fields and filtrations} \label{S:1}

\subsection{Definitions and fundamental results}

Let $\mathbb{D}$ be the space of c\`adl\`ag\footnote{French acronym for right-continuous with left limits} functions from $[0,T]$ into $\mathbb{R}$. Let $\Lambda$ be the set of time changes from $[0,T]$ into $[0,T]$, i.e.~the set of all continuous strictly increasing functions $\lambda : [0,T]\rightarrow [0,T]$ such that $\lambda(0)=0$ and $\lambda(T)=T$. We define the Skorohod distance as follows
$$
d_S(x,y)=\inf_{\lambda \in \Lambda} \big\{ ||\lambda - Id||_{\infty} \vee ||x-y\circ\lambda||_{\infty}\big\}
$$
for each $x$ and $y$ in $\mathbb{D}$. Let $(X^n)_{n\geq 1}$ and $X$ be c\`adl\`ag processes (i.e.~whose paths are in~$\mathbb{D}$), indexed by $[0,T]$ and defined on $(\Omega, \mathcal H, P)$. We will write $X^n \stackrel{P}{\rightarrow} X$ when $(X^n)_{n\geq 1}$ converges in probability under the Skorohod $J_1$ topology to $X$ i.e.~when the sequence of random variables $(d_S(X^n,X))_{n\geq 1}$ converges in probability to zero. We can now introduce the concepts of weak convergence of $\sigma$-fields and of filtrations.

\begin{definition}\label{D:sigmafield}
A sequence of $\sigma$-algebras $\mathcal{A}^n$ converges weakly to a $\sigma$-algebra $\mathcal{A}$ if and only if for all $B\in\mathcal{A}$, $E(1_B\mid\mathcal{A}^n)$ converges in probability to $1_B$. We write $\mathcal{A}^n\stackrel{w}{\rightarrow}\mathcal{A}$.
\end{definition}

\begin{definition}\label{D:filtration}
A sequence of right-continuous filtrations $\mathbb F^n$ converges weakly to a filtration $\mathbb F$ if and only if for all $B\in\mathcal F_T$, the sequence of c\`adl\`ag martingales $E(1_B\mid\mathcal F^n_{.})$ converges in probability under the Skorohod $J_1$ topology on $\mathbb{D}$ to the martingale $E(1_B\mid\mathcal F_{.})$. We write $\mathbb F^n\stackrel{w}{\rightarrow}\mathbb F$.
\end{definition}

The following lemmas provide characterizations of the weak convergence of $\sigma$-fields and filtrations. We refer to \cite{Coquet} for the proofs.

\begin{lemma}\label{L:sigma}
A sequence of $\sigma$-algebras $\mathcal A^n$ converges weakly to a $\sigma$-algebra $\mathcal A$ if and only if $E(Z\mid\mathcal A^n)$ converges in probability to $Z$ for any integrable and $\mathcal A$~measurable random variable $Z$.
\end{lemma}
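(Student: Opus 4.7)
The plan is to prove the two directions separately, with the forward direction being essentially tautological and the reverse direction following a standard three-step approximation (indicators $\to$ simple functions $\to$ integrable).

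For the "if" direction, simply take $Z = 1_B$ for an arbitrary $B \in \mathcal A$. Such a $Z$ is integrable and $\mathcal A$-measurable, so the hypothesis gives $E(1_B \mid \mathcal A^n) \to 1_B$ in probability, which is exactly Definition~\ref{D:sigmafield}.

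For the "only if" direction, first upgrade from indicators to simple $\mathcal A$-measurable functions by linearity: if $Z_k = \sum_{i=1}^m a_i 1_{B_i}$ with $B_i \in \mathcal A$, then
\[
E(Z_k \mid \mathcal A^n) - Z_k = \sum_{i=1}^m a_i \bigl( E(1_{B_i} \mid \mathcal A^n) - 1_{B_i} \bigr),
\]
and a finite sum of sequences converging to $0$ in probability converges to $0$ in probability. Next, given an integrable $\mathcal A$-measurable $Z$ and $\varepsilon > 0$, use the density of simple functions in $L^1$ to choose a simple $\mathcal A$-measurable $Z_k$ with $\|Z - Z_k\|_1 < \varepsilon^2$. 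Decompose
\[
E(Z \mid \mathcal A^n) - Z = E(Z - Z_k \mid \mathcal A^n) + \bigl( E(Z_k \mid \mathcal A^n) - Z_k \bigr) + (Z_k - Z).
\]
The first term has $L^1$-norm at most $\|Z - Z_k\|_1 < \varepsilon^2$ by the $L^1$-contraction of conditional expectation, so Markov's inequality bounds the probability that it exceeds $\varepsilon$ by $\varepsilon$; the third term is handled identically. The middle term tends to $0$ in probability by the simple-function case. A triangle inequality then gives $P\bigl( |E(Z \mid \mathcal A^n) - Z| > 3\varepsilon \bigr) < 3\varepsilon$ for all sufficiently large $n$, proving convergence in probability.

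The main (mild) obstacle is the middle step: one must be sure that the approximation error passes uniformly through the operator $E(\cdot \mid \mathcal A^n)$ as $n$ varies. This is precisely where the $L^1$-contraction property of conditional expectation is essential, since it lets us dominate $E(Z - Z_k \mid \mathcal A^n)$ in $L^1$ by the fixed quantity $\|Z - Z_k\|_1$, independently of $n$, so that a single choice of approximating $Z_k$ works simultaneously for all sufficiently large $n$.
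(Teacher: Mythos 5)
Your proof is correct. The paper does not prove this lemma itself --- it defers to \cite{Coquet} --- so there is nothing to compare line by line; your argument (indicators $\to$ simple functions by linearity $\to$ integrable $Z$ by $L^1$-density, with the $L^1$-contraction of conditional expectation giving a bound uniform in $n$) is the standard one, and it is the same triangle-inequality-plus-Markov technique the paper itself uses in its proof of Lemma~\ref{L:carac}.
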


\begin{lemma}
A sequence of filtrations $\mathbb F^n$ converges weakly to a filtration $\mathbb F$ if and only if $E(Z\mid\mathcal F^n_{.})$ converges in probability under the Skorohod $J_1$ topology to $E(Z\mid\mathcal F_{.})$, for any integrable, $\mathcal F_T$~measurable random variable $Z$.
\end{lemma}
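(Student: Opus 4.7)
The backward implication is immediate: specialising the hypothesis to $Z=1_B$ for $B\in\mathcal F_T$ yields exactly the condition in Definition~\ref{D:filtration}, so the work is in the forward direction.

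For the forward direction, I would proceed by a density argument on $Z$ in $L^1(\mathcal F_T,P)$. First I would extend the convergence from indicators to finite real linear combinations $Z = \sum_{i=1}^k c_i 1_{B_i}$ with $B_i\in\mathcal F_T$. Writing $M^{n,i} = E(1_{B_i}\mid\mathcal F^n_\cdot)$ and $M^i = E(1_{B_i}\mid\mathcal F_\cdot)$, the hypothesis gives $M^{n,i}\to M^i$ in $J_1$-probability for each $i$. The task is to deduce $\sum_i c_i M^{n,i}\to \sum_i c_i M^i$ in $J_1$-probability, which is nontrivial because $J_1$-convergence is not preserved by finite sums in general. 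The plan is to consider the $\mathbb R^k$-valued c\`adl\`ag martingale $(M^{n,1},\ldots,M^{n,k})$ and, after passing to an a.s.-convergent subsequence coordinatewise, construct a single time change $\lambda_n\in\Lambda$ valid for all coordinates simultaneously, by exploiting that the jumps of these martingales live on the graphs of $\mathbb F^n$-stopping times. Applying the continuous summation map $(x_1,\ldots,x_k)\mapsto\sum_i c_i x_i$ then yields the desired Skorohod convergence for simple $Z$.

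With the case of simple functions in hand, I would extend to general $Z\in L^1(\mathcal F_T)$ using Doob's weak maximal inequality. Given $\varepsilon,\eta>0$, pick a simple $\mathcal F_T$-measurable $Z'$ with $\|Z-Z'\|_1<\varepsilon\eta/4$; then
\begin{equation*}
P\!\left(\sup_{t\leq T}\bigl|E(Z-Z'\mid\mathcal F^n_t)\bigr|>\eta\right)\;\leq\;\frac{\|Z-Z'\|_1}{\eta}\;<\;\frac{\varepsilon}{4},
\end{equation*}
uniformly in $n$, and likewise with $\mathcal F_t$ in place of $\mathcal F^n_t$. Since $d_S(x,y)\leq\|x-y\|_\infty$ (take $\lambda=\mathrm{Id}$), these estimates let one replace $Z$ by $Z'$ at the cost of $O(\varepsilon)$ in probability, and the convergence established for $Z'$ in the previous step closes the argument.

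The main obstacle is the simple-function step: reconciling coordinatewise $J_1$-convergence with vector-valued $J_1$-convergence, so that a continuous linear map can be applied. Unlike uniform convergence, the Skorohod $J_1$ topology is not a vector space topology, and a naive term-by-term argument already fails for the sum of two $J_1$-convergent sequences whose optimal time changes disagree; it is the martingale structure of the $M^{n,i}$, combined with the ability to align their jump locations through common $\mathbb F^n$-stopping times, that makes the passage possible.
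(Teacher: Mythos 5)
The paper gives no proof of this lemma (it is imported from \cite{Coquet}), so your proposal has to stand on its own. Its skeleton is right: the backward direction by taking $Z=1_B$, and the reduction from general integrable $Z$ to simple $Z$ via Doob's weak maximal inequality together with $d_S(x,y)\leq\|x-y\|_\infty$ and the triangle inequality for $d_S$ --- that part is correct and is exactly how one closes the argument. You also correctly identify the crux: $J_1$-convergence is not stable under addition, so passing from indicators to $\sum_i c_i 1_{B_i}$ is the real content. But your proposed resolution of that crux is a gap rather than a proof. Knowing that the jumps of $M^{n,i}=E(1_{B_i}\mid\mathcal F^n_\cdot)$ are carried by graphs of $\mathbb F^n$-stopping times does not produce a single time change valid for all coordinates: if $M^1$ and $M^2$ share a jump at $t$, coordinatewise convergence only yields approximating jumps of $M^{n,1}$ at some $t^1_n\to t$ and of $M^{n,2}$ at some $t^2_n\to t$, and nothing in the common-filtration or stopping-time structure forces $t^1_n=t^2_n$, or lets one time change absorb both. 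You have restated the obstruction, not removed it.

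The missing idea is a disjointness trick. Rewrite the simple function over the atoms of $\sigma(B_1,\ldots,B_k)$, so that $Z=\sum_j d_j 1_{A_j}$ with the $A_j$ pairwise disjoint. Then for every index set $I$, $\sum_{j\in I}E(1_{A_j}\mid\mathcal F^n_\cdot)=E\big(1_{\cup_{j\in I}A_j}\mid\mathcal F^n_\cdot\big)$ is again the conditional expectation martingale of an indicator, hence converges in $J_1$-probability by Definition~\ref{D:filtration}. Now invoke the standard Skorohod-topology fact (see Jacod and Shiryaev, Chapter VI) that if $x^n_i\to x_i$ for each $i$ and the partial sums $\sum_{j\in I}x^n_j\to\sum_{j\in I}x_j$ also converge in $J_1$, then $(x^n_1,\ldots,x^n_k)\to(x_1,\ldots,x_k)$ for the $J_1$ topology on $\mathbb R^k$-valued c\`adl\`ag paths: it is precisely the convergence of the sums that forces the jump times to align asymptotically, which is the alignment you were trying to manufacture by hand. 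Since $(x_1,\ldots,x_k)\mapsto\sum_j d_j x_j$ is $J_1$-continuous on the vector-valued path space, this yields the claim for simple $Z$ (working along a.s.\ convergent subsequences), and your density step then finishes the proof as written.
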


The weak convergence of the $\sigma$-fields $\mathcal F^n_t$ to $\mathcal F_t$ for all $t$ does not imply the weak convergence of the filtrations $\mathbb F^n$ to $\mathbb F$. The reverse implication does not hold neither.

Coquet, M\'emin and Slominsky provide a characterization of weak convergence of filtrations when the limiting filtration is the natural filtration of some c\`adl\`ag process $X$, see Lemma 3 in \cite{Coquet}. We provide a similar result for weak convergence of $\sigma$-fields when the limiting $\sigma$-field is generated by some c\`adl\`ag process $X$.

\begin{lemma}\label{L:carac}
Let $X$ be a c\`adl\`ag process. Define $\mathcal A=\sigma(X_t, 0\leq t\leq T)$ and let $(\mathcal A^n)_{n\geq 1}$ be a sequence of $\sigma$-fields. Then $\mathcal A^n\stackrel{w}{\rightarrow}\mathcal A$ if and only if
$$
E(f(X_{t_1}, \ldots, X_{t_k})\mid\mathcal A^n)\stackrel{P}{\rightarrow}f(X_{t_1}, \ldots, X_{t_k})
$$
for all $k\in \mathbb N$, $t_1, \ldots, t_k$ points of a dense subset $\mathcal D$ of $[0,T]$ containing $T$ and for any continuous and bounded function $f : \mathbb R^k\rightarrow \mathbb R$.
\end{lemma}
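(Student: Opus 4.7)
The forward implication is immediate: for any choice of $k$, times $t_1,\ldots,t_k$ and continuous bounded $f$, the random variable $f(X_{t_1},\ldots,X_{t_k})$ is bounded, hence integrable, and $\mathcal A$-measurable, so Lemma~\ref{L:sigma} gives the desired conditional convergence. The substance of the lemma is the converse, which I plan to reduce (again via Lemma~\ref{L:sigma}) to showing that $E(Z\mid\mathcal A^n)\stackrel{P}{\to}Z$ for every bounded $\mathcal A$-measurable $Z$. I will do this in two steps: first upgrade the hypothesis from $t_i\in\mathcal D$ to arbitrary $t_i\in[0,T]$, and then run a monotone class argument.

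For the first step, fix arbitrary $t_1,\ldots,t_k\in[0,T]$ and a continuous bounded $g:\mathbb R^k\to\mathbb R$. Since $\mathcal D$ is dense and contains $T$, I can choose $t_i^{(j)}\in\mathcal D$ with $t_i^{(j)}\downarrow t_i$ as $j\to\infty$ (taking $t_i^{(j)}=T$ if $t_i=T$). Right-continuity of $X$ and continuity of $g$ give $Y_j:=g(X_{t_1^{(j)}},\ldots,X_{t_k^{(j)}})\to Y:=g(X_{t_1},\ldots,X_{t_k})$ a.s., and since $g$ is bounded, dominated convergence promotes this to $L^1$ convergence. Writing
$$
E(Y\mid\mathcal A^n)-Y=E(Y-Y_j\mid\mathcal A^n)+\bigl[E(Y_j\mid\mathcal A^n)-Y_j\bigr]+(Y_j-Y),
$$
the first summand is bounded in $L^1$ by $\|Y-Y_j\|_1$ uniformly in $n$, the third tends to $0$ in $L^1$ as $j\to\infty$, and for each fixed $j$ the middle summand tends to $0$ in probability as $n\to\infty$ by hypothesis. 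Choosing $j$ large and then $n$ large yields $E(Y\mid\mathcal A^n)\stackrel{P}{\to}Y$ for \emph{all} $t_1,\ldots,t_k\in[0,T]$.

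For the second step, let $\mathcal M$ denote the class of bounded $\mathcal A$-measurable $Z$ with $E(Z\mid\mathcal A^n)\stackrel{P}{\to}Z$. It is a vector space that contains the constants, and the very same triangle decomposition (with $Y_j$ replaced by a bounded monotone approximating sequence, using dominated convergence for the $L^1$ bound) shows $\mathcal M$ is closed under bounded monotone limits. By the first step, $\mathcal M$ contains the multiplicative class $\{g(X_{t_1},\ldots,X_{t_k}):k\in\mathbb N,\ t_i\in[0,T],\ g\in C_b(\mathbb R^k)\}$, whose generated $\sigma$-field is exactly $\mathcal A$. The monotone class theorem then gives $\mathcal M\supseteq\{Z:Z\text{ bounded }\mathcal A\text{-measurable}\}$; in particular $E(1_B\mid\mathcal A^n)\stackrel{P}{\to}1_B$ for every $B\in\mathcal A$, which is $\mathcal A^n\stackrel{w}{\to}\mathcal A$.

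I expect the main technical obstacle to be the first step: the hypothesis gives convergence in probability, not in $L^1$ or a.s., so the double-limit in $(j,n)$ has to be organized carefully, exploiting the $L^1$-boundedness of conditional expectation to make the approximation error uniform in $n$ while the hypothesis provides convergence only for fixed $j$.
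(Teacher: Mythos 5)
Your proof is correct and rests on the same mechanism as the paper's: approximate the general $\mathcal A$-measurable target in $L^1$ by variables of the special form, then use the $L^1$-contraction of conditional expectation to make the approximation error uniform in $n$ while the hypothesis handles the special class for each fixed approximant. The only real difference is that the paper simply \emph{asserts} the existence of an $L^1$-approximation $f(X_{t_1},\ldots,X_{t_k})$ with $t_i\in\mathcal D$ to $1_A$, whereas you prove that density claim explicitly, via right-continuity of $X$ (to pass from $\mathcal D$ to all of $[0,T]$) together with a functional monotone class argument — thereby filling in the one step the paper leaves to the reader.
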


\begin{proof}
Necessity follows from  the definition of the weak convergence of $\sigma$-fields. Let us prove the sufficiency. Let $A\in\mathcal A$ and $\epsilon>0$. There exists $k\in\mathbb N$ and $t_1, \ldots, t_k$ in $\mathcal D$ such that 
$$
E(|f(X_{t_1}, \ldots, X_{t_k})-1_A|)<\epsilon.
$$
Let $\eta>0$. We need to show that $P(|E(1_A\mid\mathcal{A}^n)-1_A|\geq \eta)$ converges to zero.
\begin{align*}
P&(|E(1_A\mid\mathcal A^n)-1_A|\geq \eta)\leq P(|E(1_A\mid\mathcal A^n)-E(f(X_{t_1}, \ldots, X_{t_k})\mid\mathcal A^n)|\geq\frac{\eta}{3})\\
&+P(|E(f(X_{t_1}, \ldots, X_{t_k})\mid\mathcal A^n)-f(X_{t_1}, \ldots, X_{t_k})|\geq\frac{\eta}{3})+P(|f(X_{t_1}, \ldots, X_{t_k})-1_A|\geq\frac{\eta}{3})\\
&\leq \frac{6}{\eta}E(|f(X_{t_1}, \ldots, X_{t_k})-1_A|)+P(|E(f(X_{t_1}, \ldots, X_{t_k})\mid\mathcal A^n)-f(X_{t_1}, \ldots, X_{t_k})|\geq\frac{\eta}{3})\\
&\leq \frac{6}{\eta}\epsilon+P(|E(f(X_{t_1}, \ldots, X_{t_k})\mid\mathcal A^n)-f(X_{t_1}, \ldots, X_{t_k})|\geq\frac{\eta}{3})
\end{align*}
where the second inequality follows from the Markov inequality. By assumption, there exists $N$ such that for all $n\geq N$,
$$
P(|E(f(X_{t_1}, \ldots, X_{t_k})\mid\mathcal A^n)-f(X_{t_1}, \ldots, X_{t_k})|\geq\frac{\eta}{3})\leq \epsilon
$$
hence $P(|E(1_A\mid\mathcal A^n)-1_A|\geq \eta)\leq (\frac{6}{\eta}+1)\epsilon$.
\end{proof}

In \cite{Coquet}, the authors provide cases where the weak convergence of a sequence of natural filtrations of given c\`adl\`ag processes is guaranteed. We provide here a similar result for point wise weak convergence of the associated $\sigma$-fields.

\begin{lemma}\label{L:conv}
Let $(X^n)_{n\geq 1}$ be a sequence of c\`adl\`ag processes converging in probability to a c\`adl\`ag process $X$. Let $\mathbb F^n$ and $\mathbb F$ be the natural filtrations of $X^n$ and $X$ respectively. Then $\mathcal F^n_t\stackrel{w}{\rightarrow}\mathcal F_t$ for all $t$ such that $P(\Delta X_t \neq 0)=0$.
\end{lemma}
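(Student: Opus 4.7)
The plan is to reduce to Lemma~\ref{L:carac}: fix $t$ with $P(\Delta X_t \neq 0)=0$ and apply that lemma with the horizon $T$ replaced by $t$ and with $\mathcal{A}=\mathcal{F}_t$, $\mathcal{A}^n=\mathcal{F}^n_t$. So it suffices to exhibit a dense subset $\mathcal{D}$ of $[0,t]$ containing $t$ such that for all $k$, all $t_1,\dots,t_k\in\mathcal{D}$, and all bounded continuous $f:\mathbb{R}^k\to\mathbb{R}$,
$$
E\bigl(f(X_{t_1},\dots,X_{t_k})\mid\mathcal{F}^n_t\bigr)\stackrel{P}{\to}f(X_{t_1},\dots,X_{t_k}).
$$

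First, I would construct $\mathcal{D}$. The set of fixed times of discontinuity $J=\{s\in[0,t]:P(\Delta X_s\neq 0)>0\}$ is at most countable (a standard consequence of the fact that a c\`adl\`ag path has at most countably many jumps, together with Fubini), so $\mathcal{D}:=([0,t]\setminus J)\cup\{t\}$ is dense in $[0,t]$ and contains $t$ by the hypothesis on $t$. Every point of $\mathcal{D}$ is a continuity point of $X$ in distribution, and the standard property of the Skorohod $J_1$ topology then gives $X^n_s\stackrel{P}{\to}X_s$ for every $s\in\mathcal{D}$. Applying the continuous mapping theorem to the bounded continuous $f$ yields $f(X^n_{t_1},\dots,X^n_{t_k})\stackrel{P}{\to}f(X_{t_1},\dots,X_{t_k})$ for $t_1,\dots,t_k\in\mathcal{D}$; since $f$ is bounded, dominated convergence upgrades this to $L^1$ convergence.

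Now comes the key decomposition. Because $t_1,\dots,t_k\leq t$, the random variable $f(X^n_{t_1},\dots,X^n_{t_k})$ is $\mathcal{F}^n_t$-measurable, so
$$
E\bigl(f(X_{t_1},\dots,X_{t_k})\mid\mathcal{F}^n_t\bigr)-f(X_{t_1},\dots,X_{t_k})=A_n+B_n,
$$
where
$$
A_n=E\bigl(f(X_{t_1},\dots,X_{t_k})-f(X^n_{t_1},\dots,X^n_{t_k})\mid\mathcal{F}^n_t\bigr),\quad B_n=f(X^n_{t_1},\dots,X^n_{t_k})-f(X_{t_1},\dots,X_{t_k}).
$$
By the $L^1$-contraction property of conditional expectation, $\|A_n\|_{L^1}\leq\|f(X_{t_1},\dots,X_{t_k})-f(X^n_{t_1},\dots,X^n_{t_k})\|_{L^1}\to0$, so $A_n\to0$ in probability; and $B_n\to0$ in probability by the preceding paragraph. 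Lemma~\ref{L:carac} then delivers $\mathcal{F}^n_t\stackrel{w}{\to}\mathcal{F}_t$.

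The only non-routine step is the pointwise-convergence fact $X^n_s\stackrel{P}{\to}X_s$ at continuity points $s$, which is the main technical obstacle but is a classical property of Skorohod $J_1$ convergence; the rest is bookkeeping. A minor subtlety, which I would gloss over as the paper seems to, is the identification of the right-continuous natural filtration at time $t$ with $\sigma(X_s,\,s\leq t)$ when invoking Lemma~\ref{L:carac}, but this does not affect the argument for weak convergence of $\sigma$-fields completed by $P$-null sets.
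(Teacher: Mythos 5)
Your proof is correct and follows essentially the same route as the paper's: both reduce to Lemma~\ref{L:carac} via the dense set of non-fixed-discontinuity times, use the identical two-term decomposition with $f(X^n_{t_1},\dots,X^n_{t_k})$ being $\mathcal F^n_t$-measurable, and conclude from the $L^1$ convergence of $f(X^n_{t_1},\dots,X^n_{t_k})$ to $f(X_{t_1},\dots,X_{t_k})$ obtained from Skorohod convergence at continuity points. Your write-up is in fact slightly more explicit than the paper's (countability of the fixed times of discontinuity, the $L^1$-contraction step, and the right-continuity caveat, which the paper also glosses over).
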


\begin{proof}
Let $t$ be such that $P(\Delta X_t \neq 0)=0$. Since $X$ is c\`adl\`ag, there exists $k\in\mathbb N$, and $t_1,\ldots,t_k \leq t$ such that $P(\Delta X_{t_i} \neq 0)=0$, for all $1\leq i\leq k$. Let $f : \mathbb R^k\rightarrow \mathbb R$ be a continuous and bounded function. By Lemma \ref{L:carac}, it suffices to show that
$$
E(f(X_{t_1}, \ldots, X_{t_k})\mid\mathcal F^n_t)\stackrel{P}{\rightarrow}f(X_{t_1}, \ldots, X_{t_k})
$$
An application of Markov's inequality leads to the following estimate
\begin{align*}
P(|E(f(X_{t_1}, \ldots,& X_{t_k})\mid\mathcal F^n_t)-(X_{t_1}, \ldots, X_{t_k})|\geq \eta)\\
&\leq P(|E(f(X_{t_1}, \ldots, X_{t_k})-f(X^n_{t_1}, \ldots, X^n_{t_k})\mid\mathcal F^n_t)|\geq \frac{\eta}{2})\\
&+P(|E(f(X^n_{t_1}, \ldots, X^n_{t_k})\mid\mathcal F^n_t)-f(X_{t_1}, \ldots, X_{t_k})|\geq \frac{\eta}{2})\\
&\leq \frac{4}{\eta}E(|f(X^n_{t_1}, \ldots, X^n_{t_k})-f(X_{t_1}, \ldots, X_{t_k})|)
\end{align*}
Since $X^n\stackrel{P}{\rightarrow}X$ and $P(\Delta X_{t_i} \neq 0)=0$, for all $1\leq i\leq k$, it follows that
\begin{equation}\label{eq:discr}
(X^n_{t_1}, \ldots X^n_{t_k})\stackrel{P}{\rightarrow}(X_{t_1}, \ldots X_{t_k})
\end{equation}
and hence $f(X^n_{t_1}, \ldots X^n_{t_k})$ converges in $L^1$ to $f(X_{t_1}, \ldots X_{t_k})$. This ends the proof of the lemma.
\end{proof}

For a given c\`adl\`ag process $X$, a time $t$ such that $P(\Delta X_t \neq 0)>0$ will be called a fixed time of discontinuity of $X$, and we will say that $X$ has no fixed times of discontinuity if $P(\Delta X_t\neq 0)=0$ for all $0\leq t\leq T$. Lemma \ref{L:conv} can be improved when the sequence $X^n$ is the discretization of the c\`adl\`ag process $X$ along some refining sequence of subdivisions $(\pi_n)_{n\geq 1}$ such that each fixed time of discontinuity of $X$ belongs to $\cup_n \pi_n$.

\begin{lemma}\label{L:discr}
Let $X$ be a c\`adl\`ag process. Consider a sequence of subdivisions $(\pi_n=\{t^n_k\}, n\geq~1)$ whose mesh tends to zero and let $X_n$ be the discretized process defined by $X_t^n = X_{t^n_k}$, for all $t_n^k \leq t < t^n_{k+1}$. Let $\mathbb F$ and $\mathbb F^n$ be the natural filtrations of $X$ and $X^n$. If each fixed time of discontinuity of $X$ belongs to $\cup_n \pi_n$, then $\mathcal F^n_t\stackrel{w}{\rightarrow}\mathcal F_t$, for all $t$.
\end{lemma}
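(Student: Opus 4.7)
The plan is to apply Lemma~\ref{L:carac} to the $\sigma$-field $\mathcal{A}=\mathcal{F}_t=\sigma(X_s,\,0\leq s\leq t)$. Since the set $J$ of fixed times of discontinuity of $X$ is at most countable, the set $\mathcal{D}:=([0,t]\setminus J)\cup\{t\}$ is dense in $[0,t]$ and contains $t$. It therefore suffices to show that for every $k\in\mathbb{N}$, every $t_1,\dots,t_k\in\mathcal{D}$ and every continuous bounded $f:\mathbb{R}^k\to\mathbb{R}$,
$$
E\bigl(f(X_{t_1},\dots,X_{t_k})\mid\mathcal{F}^n_t\bigr)\stackrel{P}{\rightarrow} f(X_{t_1},\dots,X_{t_k}).
$$

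The second step is essentially the calculation already carried out in the proof of Lemma~\ref{L:conv}. Each $X^n_{t_i}$ equals $X_{\sigma^n_i}$ where $\sigma^n_i:=\max\{s\in\pi_n:s\leq t_i\}\leq t$, so $f(X^n_{t_1},\dots,X^n_{t_k})$ is $\mathcal{F}^n_t$-measurable and may be inserted into the conditional expectation. A triangle-inequality split, Markov's inequality, and the $L^1$-contractivity of $E(\cdot\mid\mathcal{F}^n_t)$ then reduce matters to showing the $L^1$ convergence $f(X^n_{t_1},\dots,X^n_{t_k})\to f(X_{t_1},\dots,X_{t_k})$, which by continuity and boundedness of $f$ together with dominated convergence in turn reduces to the pointwise convergence $X^n_{t_i}\stackrel{P}{\rightarrow}X_{t_i}$ for each $i$.

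The only real work is this last pointwise convergence. Since the sequence $(\pi_n)$ is refining (as in the setup preceding the lemma) and has vanishing mesh, $\sigma^n_i\uparrow t_i$, and the c\`adl\`ag property of $X$ gives $X_{\sigma^n_i}\to X_{t_i^-}$ almost surely. If $t_i\in[0,t)\setminus J$, then $X_{t_i^-}=X_{t_i}$ a.s.~and we are done. The delicate case is $t_i=t$ with $t\in J$: here the hypothesis that each fixed time of discontinuity of $X$ belongs to $\cup_n\pi_n$, combined with the refining property, yields $t\in\pi_n$ for all sufficiently large $n$, so $\sigma^n_i=t$ and $X^n_t=X_t$ eventually. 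This is precisely the obstacle that the assumption on $\cup_n\pi_n$ is designed to remove; every other case is handled by the same mechanism as in Lemma~\ref{L:conv}.
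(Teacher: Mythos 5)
Your proof is correct and takes essentially the same route as the paper, whose own proof simply says ``same as Lemma~\ref{L:conv}, and the finite-dimensional convergence (\ref{eq:discr}) holds because the subdivisions contain the fixed discontinuity times'' --- you have just filled in the details. In particular, you correctly isolate the one genuinely new point (the time $t$ itself when it is a fixed discontinuity) and correctly note that the refining property, stated in the prose preceding the lemma though omitted from its formal statement, is what forces $t\in\pi_n$ for all large $n$ and hence $X^n_t=X_t$ eventually.
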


\begin{proof}
The proof is essentially the same as that of Lemma \ref{L:conv}. Now, equation (\ref{eq:discr}) holds because the subdivision contains the discontinuity points of $X$. 
\end{proof}

We will also need the two following lemmas from the theory of weak convergence of $\sigma$-fields. The first result is proved in \cite{Coquet0} and the second one in \cite{Coquet}.

\begin{lemma}\label{L:vee}
Let $(\mathcal A^n)_{n\geq 1}$ and $(\mathcal B^n)_{n\geq 1}$ be two sequences of $\sigma$-fields that weakly converge to $\mathcal A$ and $\mathcal B$, respectively. Then
$$
\mathcal A^n\vee\mathcal B^n\stackrel{w}{\rightarrow}\mathcal A\vee\mathcal B
$$
\end{lemma}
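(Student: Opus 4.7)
The plan is to reduce the claim, via Lemma \ref{L:sigma}, to showing that $E(Z\mid \mathcal A^n\vee\mathcal B^n)\stackrel{P}{\to} Z$ for every bounded $\mathcal A\vee\mathcal B$-measurable $Z$, then handle this by a density argument that reduces to the ``product'' case $Z=XY$ with $X$ bounded $\mathcal A$-measurable and $Y$ bounded $\mathcal B$-measurable. The key observation is that, although $E(XY\mid \mathcal A^n\vee \mathcal B^n)$ is hard to manipulate directly, we can build an explicit $\mathcal A^n\vee\mathcal B^n$-measurable approximator.

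First I would note that the collection of finite sums of products $XY$ (with $X,Y$ bounded, $\mathcal A$- and $\mathcal B$-measurable respectively) is dense in $L^1(\mathcal A\vee\mathcal B)$, since $\{A\cap B : A\in\mathcal A, B\in\mathcal B\}$ is a $\pi$-system generating $\mathcal A\vee\mathcal B$ and a monotone class argument applies. Hence by the usual $3\varepsilon$ estimate using that $Z\mapsto E(Z\mid \mathcal A^n\vee \mathcal B^n)$ is an $L^1$-contraction, it suffices to prove convergence for a single such product $Z=XY$.

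Next, for such $Z=XY$, define $X_n:=E(X\mid\mathcal A^n)$ and $Y_n:=E(Y\mid\mathcal B^n)$. By Lemma \ref{L:sigma}, $X_n\stackrel{P}{\to} X$ and $Y_n\stackrel{P}{\to} Y$; both sequences are uniformly bounded by the sup-norms of $X$ and $Y$ (contraction of conditional expectation), so $X_n Y_n \to XY$ in probability and, by bounded convergence, in $L^1$. Since $X_n Y_n$ is $\mathcal A^n\vee\mathcal B^n$-measurable, we can write
\[
E(XY\mid\mathcal A^n\vee\mathcal B^n)-X_n Y_n = E\!\left(XY-X_n Y_n\mid\mathcal A^n\vee\mathcal B^n\right),
\]
and another application of the $L^1$-contraction gives $\|E(XY\mid\mathcal A^n\vee\mathcal B^n)-X_n Y_n\|_{L^1}\to 0$. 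Combining this with $X_n Y_n\to XY$ in $L^1$ yields $E(XY\mid\mathcal A^n\vee\mathcal B^n)\to XY$ in $L^1$, hence in probability.

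The main (and really only) subtle step is the one above: bridging from the separate weak convergences to a joint statement by explicitly exhibiting the $\mathcal A^n\vee\mathcal B^n$-measurable random variable $X_n Y_n$ that approximates $XY$; without this trick one would be tempted to manipulate $E(XY\mid \mathcal A^n\vee\mathcal B^n)$ directly, where the absence of any conditional-independence assumption between $\mathcal A^n$ and $\mathcal B^n$ is a genuine obstruction. Once this is done, extension from products to arbitrary bounded $\mathcal A\vee\mathcal B$-measurable $Z$, and then via Lemma \ref{L:sigma} to the claimed weak convergence $\mathcal A^n\vee\mathcal B^n\stackrel{w}{\to}\mathcal A\vee\mathcal B$, is routine.
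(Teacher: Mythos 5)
Your proof is correct. Note that the paper does not prove Lemma~\ref{L:vee} at all --- it is quoted from Coquet, M\'emin and Slominski \cite{Coquet0} --- and your argument is essentially the standard one given there: the whole content is the observation that $E(X\mid\mathcal A^n)\,E(Y\mid\mathcal B^n)$ is an explicit $\mathcal A^n\vee\mathcal B^n$-measurable approximant of $XY$, after which the $L^1$-contraction property of conditional expectation and a monotone class/density argument over the $\pi$-system $\{A\cap B: A\in\mathcal A,\ B\in\mathcal B\}$ finish the job.
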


\begin{lemma}\label{L:subset}
Let $(\mathcal A^n)_{n\geq 1}$ and $(\mathcal B^n)_{n\geq 1}$ be two sequences of $\sigma$-fields such that $\mathcal A^n \subset \mathcal B^n$ for all $n$. Let $\mathcal A$ be a $\sigma$-field. If $\mathcal A^n\stackrel{w}{\rightarrow}\mathcal A$ then $\mathcal B^n\stackrel{w}{\rightarrow}\mathcal A$.
\end{lemma}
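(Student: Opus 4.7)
The plan is to upgrade weak convergence from convergence in probability of the indicator conditional expectations to $L^2$ convergence, and then exploit the monotonicity of the $L^2$ norm of conditional expectations along a nested pair of $\sigma$-fields. Fix $B\in\mathcal A$; by definition of $\mathcal A^n\stackrel{w}{\rightarrow}\mathcal A$ we have $E(1_B\mid\mathcal A^n)\stackrel{P}{\rightarrow} 1_B$, and since these random variables are bounded by $1$, dominated convergence promotes this to $L^2$ convergence, giving $E[E(1_B\mid\mathcal A^n)^2]\to P(B)$.

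Next, I would use the tower property together with conditional Jensen's inequality: since $\mathcal A^n\subset\mathcal B^n$,
\begin{align*}
E[E(1_B\mid\mathcal A^n)^2]
&= E\bigl[E\bigl(E(1_B\mid\mathcal B^n)\,\big|\,\mathcal A^n\bigr)^2\bigr]\\
&\le E\bigl[E\bigl(E(1_B\mid\mathcal B^n)^2\,\big|\,\mathcal A^n\bigr)\bigr]
= E[E(1_B\mid\mathcal B^n)^2]\\
&\le E[E(1_B^2\mid\mathcal B^n)]= P(B).
\end{align*}
The two outer quantities both tend to $P(B)$, so by sandwiching, $E[E(1_B\mid\mathcal B^n)^2]\to P(B)$ as well.

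Finally, the usual identity for an orthogonal projection, combined with the tower property $E[1_B\,E(1_B\mid\mathcal B^n)]=E[E(1_B\mid\mathcal B^n)^2]$, gives
\[
E\bigl[(E(1_B\mid\mathcal B^n)-1_B)^2\bigr]= P(B) - E[E(1_B\mid\mathcal B^n)^2] \longrightarrow 0.
\]
Thus $E(1_B\mid\mathcal B^n)\to 1_B$ in $L^2$, hence in probability, which by Definition \ref{D:sigmafield} yields $\mathcal B^n\stackrel{w}{\rightarrow}\mathcal A$.

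There is no real obstacle here; the only subtle point worth flagging is that weak convergence of $\sigma$-fields in the sense of Definition \ref{D:sigmafield} is, a priori, only convergence in probability of indicator conditional expectations, so one has to remember to use the uniform boundedness of $1_B$ to upgrade to $L^2$. Once that is done, the nesting $\mathcal A^n\subset\mathcal B^n$ forces $E(1_B\mid\mathcal B^n)$ to lie in $L^2$-norm between $E(1_B\mid\mathcal A^n)$ and $1_B$, and the conclusion is immediate.
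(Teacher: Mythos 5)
Your proof is correct. Note that the paper itself does not prove this lemma at all --- it is quoted from Coquet, M\'emin and Slominski \cite{Coquet} --- so there is no in-paper argument to compare against; your write-up supplies a clean, self-contained justification. The mechanism you use is the standard Hilbert-space one: since $L^2(\mathcal A^n)\subset L^2(\mathcal B^n)$, the norm $\|E(1_B\mid\mathcal B^n)\|_2$ is squeezed between $\|E(1_B\mid\mathcal A^n)\|_2$ and $\|1_B\|_2$, and convergence of the squared norm to $P(B)$ is equivalent to $L^2$ convergence of the projection to $1_B$ because the cross term collapses by the tower property. All steps check out, including the initial upgrade from convergence in probability to $L^2$ convergence via the uniform bound $|E(1_B\mid\mathcal A^n)-1_B|\le 1$. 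For what it is worth, the same idea gives a one-line variant: $E(1_B\mid\mathcal B^n)$ is the best $L^2$-approximation of $1_B$ from $L^2(\mathcal B^n)\supset L^2(\mathcal A^n)$, hence
$$
\bigl\|1_B-E(1_B\mid\mathcal B^n)\bigr\|_2\ \le\ \bigl\|1_B-E(1_B\mid\mathcal A^n)\bigr\|_2\ \longrightarrow\ 0,
$$
which bypasses the sandwich of squared norms; but your version is equally valid and makes the projection structure explicit.
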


As pointed out in \cite{Coquet}, the results in Lemmas \ref{L:vee} and \ref{L:subset} are not true as far as one is interested in weak convergence of filtrations.

\subsection{Approximation of a given stopping time}

Let $(\mathbb{G}^n)_{n\geq 1}$ be a sequence of right-continuous filtrations and let $\mathbb{G}$ be a right-continuous filtration such that $\mathcal{G}^n_t\stackrel{w}{\rightarrow}\mathcal{G}_t$ for all $t$. In order to obtain our filtration expansion results, we need a key theorem that guarantees the $\mathbb G$~semimartingale property of a limit of $\mathbb G^n$~semimartingales as in Theorem \ref{T:semimg2}. The following lemma, which permits to approximate any $\mathbb G$~bounded stopping time $\tau$ by a sequence of $\mathbb G^n$~stopping times, will be of crucial importance in the proof of Theorem \ref{T:semimg2}, Part (ii). We prove this result using successive approximations in the case where $\tau$ takes a finite number of values and show how this property is inherited by bounded stopping times. We do not study the general case (unbounded stopping times) since we are working on the finite time interval $[0,T]$.

\begin{lemma}\label{L:stop}
Let $(\mathbb{G}^n)_{n\geq 1}$ be a sequence of right-continuous filtrations and let $\mathbb{G}$ be a right-continuous filtration such that $\mathcal{G}^n_t\stackrel{w}{\rightarrow}\mathcal{G}_t$ for all $t$. Let $\tau$ be a bounded $\mathbb G$ stopping time. Then there exists $\phi : \mathbb{N}\rightarrow \mathbb{N}$ strictly increasing and a bounded sequence $(\tau_n)_{n\geq 1}$ such that the subsequence $(\tau_{\phi(n)})_{n\geq 1}$ converges in probability to $\tau$ and each $\tau_{\phi(n)}$ is a $\mathbb G^{\phi(n)}$~stopping time.
\end{lemma}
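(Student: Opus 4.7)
The plan is to first handle stopping times $\tau$ taking finitely many values, and then bootstrap to arbitrary bounded stopping times by a dyadic approximation and diagonal extraction.

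\emph{Finite-valued case.} Suppose $\tau$ takes values $t_1<t_2<\ldots<t_k$. For each $i$ we have $\{\tau=t_i\}\in\mathcal G_{t_i}$, so by weak convergence $E(1_{\{\tau=t_i\}}\mid\mathcal G^n_{t_i})\stackrel{P}{\to}1_{\{\tau=t_i\}}$ as $n\to\infty$. Since this is a finite family of convergences in probability, I extract a subsequence $\phi(n)$ along which all $k$ of them hold almost surely simultaneously. I then define, inductively in $i$, the sets
$$
C^{\phi(n)}_i=\bigl\{E(1_{\{\tau=t_i\}}\mid\mathcal G^{\phi(n)}_{t_i})>\tfrac12\bigr\}\setminus\bigcup_{j<i}C^{\phi(n)}_j,
$$
which are disjoint and satisfy $C^{\phi(n)}_i\in\mathcal G^{\phi(n)}_{t_i}$. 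Setting
$$
\tau_{\phi(n)}=\sum_{i=1}^{k}t_i\,1_{C^{\phi(n)}_i}+t_k\,1_{\Omega\setminus\bigcup_i C^{\phi(n)}_i}
$$
gives a $\mathbb G^{\phi(n)}$ stopping time bounded by $t_k$ (since $\{\tau_{\phi(n)}\le t_i\}=\bigcup_{j\le i}C^{\phi(n)}_j\in\mathcal G^{\phi(n)}_{t_i}$). On $\{\tau=t_i\}$, almost sure convergence yields $E(1_{\{\tau=t_j\}}\mid\mathcal G^{\phi(n)}_{t_j})\to \delta_{ij}$, so for $n$ large enough $\omega\in C^{\phi(n)}_i$ and thus $\tau_{\phi(n)}(\omega)=t_i=\tau(\omega)$. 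Hence $\tau_{\phi(n)}\to\tau$ almost surely, and a fortiori in probability.

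\emph{General bounded case.} Let $\tau\le T$. Use the dyadic approximation from above,
$$
\tau^{(k)}=\sum_{j=0}^{\lceil T2^k\rceil-1}\frac{j+1}{2^k}\,1_{\{j/2^k\le\tau<(j+1)/2^k\}}+T\,1_{\{\tau=T\}},
$$
which is a $\mathbb G$ stopping time taking finitely many values, bounded by $T+1$, with $\tau^{(k)}\downarrow\tau$ as $k\to\infty$. By the finite-valued case, for each $k$ there exists a strictly increasing $\phi_k:\mathbb N\to\mathbb N$ and $\mathbb G^{\phi_k(n)}$ stopping times $\sigma^{(k)}_{\phi_k(n)}$ converging in probability to $\tau^{(k)}$ as $n\to\infty$, all bounded by $T+1$. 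A standard diagonal extraction picks integers $n_k$ with $\phi_k(n_k)$ strictly increasing in $k$ and
$$
P\bigl(|\sigma^{(k)}_{\phi_k(n_k)}-\tau^{(k)}|>2^{-k}\bigr)<2^{-k}.
$$
Set $\phi(k)=\phi_k(n_k)$ and $\tau_{\phi(k)}=\sigma^{(k)}_{\phi_k(n_k)}$, extending to all indices by $\tau_n=0$ otherwise. Then $\tau_{\phi(k)}-\tau=(\tau_{\phi(k)}-\tau^{(k)})+(\tau^{(k)}-\tau)\to 0$ in probability, and each $\tau_{\phi(k)}$ is a $\mathbb G^{\phi(k)}$ stopping time bounded by $T+1$, as required.

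\emph{Main obstacle.} The delicate point is the finite-valued case: the events $C^n_i$ must both approximate $\{\tau=t_i\}$ and lie in $\mathcal G^n_{t_i}$, while the collection $\{\tau_n\le t_i\}=\bigcup_{j\le i}C^n_j$ must remain $\mathcal G^n_{t_i}$-measurable for every $i$. Writing the sets inductively with the thresholding at $\tfrac12$ and removing previously chosen pieces handles this, but the passage from in-probability convergence of each $E(1_{\{\tau=t_i\}}\mid\mathcal G^n_{t_i})$ to a simultaneous a.s.\ statement (needed so that the winning index $i$ is identified on $\{\tau=t_i\}$) forces the extraction of a subsequence, which is the origin of the $\phi$ in the statement.
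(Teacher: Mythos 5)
Your proof is correct and follows essentially the same route as the paper's: a $\tfrac12$-thresholding of the conditional expectations $E(1_{\{\tau=t_i\}}\mid\mathcal G^n_{t_i})$ to build $\mathbb G^n$ stopping times in the finite-valued case, followed by a dyadic approximation of $\tau$ and a diagonal extraction. Your explicit default value $t_k$ on the complement of $\bigcup_i C^{\phi(n)}_i$ even tidies up a detail the paper's $\min$ construction leaves implicit.
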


\begin{proof}
Let $\tau$ be a $\mathbb G$~stopping time bounded by $T$. Then there exists a sequence $\tau_n$ of $\mathbb G$ stopping times decreasing a.s. to $\tau$ and taking values in $\big\{\frac{k}{2^n}, k\in\{0, 1, \cdots, [2^nT]+1\}\big\}$. This is true since the sequence $\tau_n=\frac{[2^n\tau]+1}{2^n}$ obviously works. Hence $\tau_n$ takes a finite number of values. We claim that 

\textbf{Claim.} \textit{for each $n$, we can construct a sequence $(\tau_{n,m})_{m\geq 1}$ converging in probability to~$\tau_n$, and such that $\tau_{n,m}$ is a $\mathbb G^m$~stopping time, for each $m$.}

Assume we can do so and let $\eta>0$ and $\epsilon>0$. Then for each $n$, $\lim_{m\rightarrow\infty}P(|\tau_{n,m}-\tau_n|>\frac{\eta}{2})=0$, i.e.~for each $n$ there exists $M_n$ such that for all $m\geq M_n$, $P(|\tau_{n,m}-\tau_n|>\frac{\eta}{2})\leq\frac{\epsilon}{2}$. Define $\phi(1)=M_1$ and $\phi(n)=\max(M_n, \phi(n-1)+1)$ by induction. The application $\phi:\mathbb N\rightarrow\mathbb N$ is strictly increasing, and for each $n$, $\tau_{n,\phi(n)}$ is a $\mathbb G^{\phi(n)}$~stopping time and $P(|\tau_{n,\phi(n)}-\tau_n|>\frac{\eta}{2})\leq\frac{\epsilon}{2}$. It follows that
$$
P(|\tau_{n,\phi(n)}-\tau|>\eta)\leq P(|\tau_{n,\phi(n)}-\tau_n|>\frac{\eta}{2})+P(|\tau_n-\tau|>\frac{\eta}{2})\leq \frac{\epsilon}{2}+P(|\tau_n-\tau|>\frac{\eta}{2})
$$
Since $\tau_n$ converges to $\tau$, there exists some $n_0$, such that for all $n\geq n_0$, $P(|\tau_n-\tau|>\frac{\eta}{2})\leq \frac{\epsilon}{2}$. Hence $\tau_{n,\phi(n)}\stackrel{P}{\rightarrow}\tau$. So in order to prove the lemma, it only remains to prove the claim above. 

\textbf{Proof of the claim.} We drop the index $n$ and assume that $\tau$ is a $\mathbb G$~stopping time that takes a finite number of values $t_1,\cdots, t_M$.  Since $\mathbb G$ is right-continuous, $1_{\{\tau=t_i\}}$ is $\mathcal{G}_{t_i}$~measurable, and since by assumption, for all $i$, $\mathcal{G}^m_{t_i}\stackrel{w}{\rightarrow}\mathcal{G}_{t_i}$, it follows that for all $i$
$$
E(1_{\{\tau=t_i\}}\mid\mathcal{G}^m_{t_i})\stackrel{P}{\rightarrow}1_{\{\tau=t_i\}}
$$
Now for $i=1$, we can extract a subsequence $E(1_{\{\tau=t_1\}}\mid\mathcal{G}^{\phi_1(m)}_{t_1})$ converging to $1_{\{\tau=t_1\}}$~a.s. and any sub-subsequence will also converge to $1_{\{\tau=t_1\}}$ a.s. Also, $\mathcal{G}^{\phi_1(m)}_{t_2}\stackrel{w}{\rightarrow}\mathcal{G}_{t_2}$, hence $E(1_{\{\tau=t_2\}}\mid\mathcal{G}^{\phi_1(m)}_{t_2})\stackrel{P}{\rightarrow}1_{\{\tau=t_2\}}$, and we can extract a further subsequence $E(1_{\{\tau=t_2\}}\mid\mathcal{G}^{\phi_1(\phi_2(m))}_{t_2})$ that converges a.s. to $1_{\{\tau=t_2\}}$. Since we have a finite number of possible values, we can repeat this reasoning up to time $t_M$. Define then $\phi=\phi_1 \circ \phi_2 \circ \cdots \circ \phi_n$, we get for all $i\in\{1,\cdots, M\}$, 
$$
E(1_{\{\tau=t_i\}}\mid\mathcal{G}^{\phi(m)}_{t_i})\stackrel{a.s}{\rightarrow}1_{\{\tau=t_i\}}.
$$
Define $\tau_m=\min_{\{i\mid E(1_{\{\tau=t_i\}}\mid\mathcal{G}^{m}_{t_i})>\frac{1}{2}\}}t_i$. Then
$$
\{\tau_m=t_i\}=\{E(1_{\{\tau=t_i\}}\mid\mathcal{G}^{m}_{t_i})>\frac{1}{2}\}\cap\{\forall t_j<t_i, E(1_{\{\tau=t_j\}}\mid\mathcal{G}^{m}_{t_j})\leq\frac{1}{2}\}
$$
and hence $\tau_m$ is a $\mathbb G^m$~stopping time. Also, obviously, $\tau_{\phi(m)}\stackrel{a.s}{\rightarrow}\tau$, hence $\tau_m\stackrel{P}{\rightarrow}\tau$.
\end{proof}

\subsection{Weak convergence of $\sigma$-fields and the semimartingale property}

Assume we are given a sequence of filtrations $(\mathbb F^m)_{m\geq 1}$ and define the filtration $\tilde{\mathbb F}=(\tilde{\mathcal F}_t)_{0\leq t\leq T}$, where $\tilde{\mathcal F}_t=\bigvee_{m}\mathcal F^m_t$. We prove in this section a stability result for $\tilde{\mathbb F}$~semimartingales. More precisely, we prove that if $X$ is an $\tilde{\mathbb F}$~semimartingale, then it remains an $\mathbb F$~semimartingale for any limiting (in the sense $\mathcal F^m_t\stackrel{w}{\rightarrow}\mathcal F_t$, for all $t \in [0,T]$) filtration $\mathbb F$ to which it is adapted.

The crucial tool for proving our first theorem is the Bichteler-Dellacherie characterization of semimartingales (see for example~\cite{Protter:2005}). Recall that if $\mathbb H$ is a filtration, an $\mathbb H$~predictable elementary process $H$ is a process of the form
$$
H_t(\omega) = \sum_{i=1}^{k}h_i(\omega)1_{]t_i,t_{i+1}]}(t);
$$
where $0 \leq t_1 \leq \ldots \leq t_{k+1} < \infty$, and each $h_i$ is $\mathcal H_{t_i}$~measurable. Moreover, for any $\mathbb H$~adapted c\`adl\`ag process $X$ and predictable elementary process $H$ of the above form, we write
$$
J_X(H) = \sum_{i=1}^{k} h_i(X_{t_{i+1}} - X_{t_{i}})
$$
\begin{theorem}[Bichteler-Dellacherie]\label{T:BichDell}
Let $X$ be an $\mathbb H$~adapted c\`adl\`ag process. Suppose that for every sequence $(H_n)_{n\geq 1}$ of bounded, $\mathbb H$ predictable elementary processes that are null outside a fixed interval $[0,N]$ and convergent to zero uniformly in $(\omega; t)$, we have that $\lim_{n\rightarrow\infty}J_X(H_n) = 0$ in probability. Then $X$ is an $\mathbb H$~semimartingale.
\end{theorem}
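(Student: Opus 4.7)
The plan is to take the hypothesis at face value and build the semimartingale decomposition $X = M + A$ directly from the continuity of the elementary integral $J_X$, which the assumption asserts as continuity from $\|\cdot\|_\infty$-small elementary predictable processes into $L^0(P)$.

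First I would localize: stopping at $T_k = \inf\{t : |X_t| \geq k\}$ yields a bounded version $X^{T_k}$, and the hypothesis is preserved because truncating an elementary integrand at $T_k$ keeps it elementary and does not increase its sup-norm. Since the semimartingale property is stable under localization, it suffices to treat the case in which $X$ is uniformly bounded on $[0,N]$. Under this reduction, $J_X$ becomes a continuous linear operator from the normed space $(S_u, \|\cdot\|_\infty)$ of bounded elementary predictable processes into $L^0(P)$ equipped with the topology of convergence in probability.

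The heart of the proof is to upgrade this bare $L^0$-continuity into an honest Banach-space continuity under an equivalent probability measure. The classical route is the Maurey-Nikishin factorization theorem (or, equivalently, Bichteler's more hands-on predecessor based on a Komlos-type extraction): any continuous linear operator from a Banach space into $L^0(P)$ factors, after passing to a suitable equivalent measure $Q \sim P$ with bounded Radon-Nikodym derivative, through some $L^p(Q)$. Testing the resulting $L^1(Q)$-continuity against elementary integrands of the form $H_t = Y \cdot 1_{(s,u]}(t)$ with $Y$ bounded and $\mathcal H_s$-measurable yields a uniform bound on $\sum_i \|E_Q[X_{t_{i+1}} - X_{t_i} \mid \mathcal H_{t_i}]\|_{L^1(Q)}$ over all finite subdivisions of $[0,N]$, which is exactly Rao's criterion for $X$ to be a $Q$-quasimartingale on $[0,N]$.

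To finish, Rao's theorem represents any quasimartingale as the difference of two $Q$-supermartingales, and the Doob-Meyer decomposition of each supermartingale writes it as a martingale plus a predictable finite-variation process, producing the desired decomposition $X = M + A$ under $Q$. Invariance of the class of semimartingales under change to an equivalent probability measure transfers this conclusion back to $P$, and undoing the localization across $k \to \infty$ finishes the argument. The main obstacle, and the only step that is not essentially bookkeeping, is the Maurey-Nikishin factorization: it is precisely there that the convergence hypothesis of the theorem is used in a non-formal way, to guarantee the existence of an equivalent measure $Q$ under which the $L^0$-bounded operator $J_X$ extends continuously into $L^1(Q)$.
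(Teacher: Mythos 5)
This is one of the two classical theorems the paper quotes verbatim from the literature (it cites~\cite{Protter:2005} and gives no proof), so there is no in-paper argument to compare yours against; I can only assess your outline on its own terms. What you describe is precisely the standard proof found in the cited reference: $L^0$-continuity of $J_X$ on elementary integrands, an equivalent change of measure turning $L^0$-boundedness of $\{J_X(H):\|H\|_\infty\leq 1\}$ into an $L^1(Q)$ bound, the resulting quasimartingale estimate via Rao's criterion, Rao's decomposition into two supermartingales, Doob--Meyer, and invariance of the semimartingale property under $Q\sim P$. As a sketch this is correct, with two points glossed over. First, the localization: $X^{T_k}$ is \emph{not} bounded, since the jump $\Delta X_{T_k}$ can be arbitrarily large (one must subtract the single-jump process $\Delta X_{T_k}1_{[T_k,\infty)}$, which is of finite variation and hence harmless); moreover $H1_{[0,T_k]}$ is predictable but not elementary in the sense defined here (the $t_i$ must be deterministic), so the claim that the good-integrator hypothesis passes to $X^{T_k}$ needs the usual extra care --- in fact the localization can largely be avoided, since once the $L^1(Q)$ bound is in hand, integrability of $X_t-X_0$ follows for free by testing against $H=1_{(0,t]}$. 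Second, the factorization step as implemented for this theorem is usually Yan's theorem (the Dellacherie--Mokobodzki--Meyer separation lemma) applied to the convex, symmetric, $L^0$-bounded set $\{J_X(H):\|H\|_\infty\leq 1\}$; a bare Nikishin factorization a priori only yields a weak-$L^1$ bound, so one should be careful about which version is invoked to land in $L^1(Q)$ with a bounded density. Neither point is a genuine gap in an outline of this depth; you have identified the correct architecture and the correct crux.
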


The converse is true by the Dominated Convergence Theorem for stochastic integrals. We can now state and prove the main theorem of this subsection.

\begin{theorem}\label{T:semimg}
Let $(\mathbb F^m)_{m\geq 1}$ be a sequence of filtrations. Let $\mathbb F$ be a filtration such that for all $t \in [0,T]$, $\mathcal F^m_t\stackrel{w}{\rightarrow}\mathcal F_t$. Define the filtration $\tilde{\mathbb F}=(\tilde{\mathcal F}_t)_{0\leq t\leq T}$, where $\tilde{\mathcal F}_t=\bigvee_{m}\mathcal F^m_t$. Let $X$ be an $\mathbb F$~adapted c\`adl\`ag process such that $X$ is an $\tilde{\mathbb F}$~semimartingale. Then $X$ is an $\mathbb F$~semimartingale.
\end{theorem}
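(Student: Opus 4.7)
The plan is to deduce the Bichteler--Dellacherie condition for $X$ in the filtration $\mathbb F$ from the same condition applied to $X$ in the larger filtration $\tilde{\mathbb F}$. The first (and essentially only) substantive observation is that the hypothesis $\mathcal F^m_t \stackrel{w}{\rightarrow} \mathcal F_t$ forces $\mathcal F_t \subset \tilde{\mathcal F}_t$ for every $t$. To establish this, I would apply Lemma \ref{L:subset} with $\mathcal A^m = \mathcal F^m_t$ and the constant sequence $\mathcal B^m = \tilde{\mathcal F}_t$: by definition of $\tilde{\mathcal F}_t = \bigvee_m \mathcal F^m_t$ we have $\mathcal F^m_t \subset \tilde{\mathcal F}_t$, and the hypothesis gives $\mathcal F^m_t \stackrel{w}{\rightarrow} \mathcal F_t$, so Lemma \ref{L:subset} yields $\tilde{\mathcal F}_t \stackrel{w}{\rightarrow} \mathcal F_t$. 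Since the left-hand side does not depend on $m$, Definition \ref{D:sigmafield} forces $E(1_B \mid \tilde{\mathcal F}_t) = 1_B$ a.s.\ for every $B \in \mathcal F_t$, i.e.\ $B \in \tilde{\mathcal F}_t$.

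With the containment $\mathcal F_t \subset \tilde{\mathcal F}_t$ in hand, the verification of Theorem \ref{T:BichDell} is immediate. Let $(H_n)_{n\geq 1}$ be a sequence of bounded $\mathbb F$--predictable elementary processes, null outside a fixed interval $[0,N]$, converging to zero uniformly in $(\omega, t)$, written as $H_n = \sum_{i=1}^{k_n} h_i^n 1_{]t_i^n, t_{i+1}^n]}$ with each $h_i^n$ being $\mathcal F_{t_i^n}$--measurable. By the above containment, each $h_i^n$ is also $\tilde{\mathcal F}_{t_i^n}$--measurable, so $H_n$ is in addition a bounded $\tilde{\mathbb F}$--predictable elementary process supported in $[0,N]$ and uniformly tending to zero. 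Since $X$ is an $\tilde{\mathbb F}$--semimartingale, the converse of Theorem \ref{T:BichDell} (dominated convergence for stochastic integrals) applied in $\tilde{\mathbb F}$ gives $J_X(H_n) \to 0$ in probability. But the random variable $J_X(H_n) = \sum_i h_i^n (X_{t_{i+1}^n} - X_{t_i^n})$ depends only on $X$ and the $h_i^n$, not on the choice of filtration, so the Bichteler--Dellacherie condition holds for $X$ in $\mathbb F$, and Theorem \ref{T:BichDell} concludes that $X$ is an $\mathbb F$--semimartingale.

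There is no real obstacle in this argument; the crux is just recognising that weak convergence of $\sigma$-fields to a fixed limit, combined with the trivial inclusion $\mathcal F^m_t \subset \tilde{\mathcal F}_t$, actually pins down $\mathcal F_t$ inside $\tilde{\mathcal F}_t$. Once this is observed, the semimartingale property passes from the larger filtration $\tilde{\mathbb F}$ to the smaller filtration $\mathbb F$ purely by restricting the class of admissible elementary integrands.
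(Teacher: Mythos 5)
Your proof is correct, and it takes a genuinely different and much shorter route than the paper's. The paper never observes the inclusion $\mathcal F_t\subset\tilde{\mathcal F}_t$; instead it runs a Bichteler--Dellacherie argument from scratch, replacing each coefficient $h_i^n$ of an $\mathbb F$--predictable elementary process by $E(h_i^n\mid\mathcal F^m_{t_i^n})$ to manufacture $\tilde{\mathbb F}$--predictable integrands, and then carefully justifying an interchange of limits in $n$ and $m$. Your key observation --- that Lemma \ref{L:subset} applied to the constant sequence $\mathcal B^m=\tilde{\mathcal F}_t$ forces $E(1_B\mid\tilde{\mathcal F}_t)=1_B$ a.s.\ for every $B\in\mathcal F_t$, hence $\mathcal F_t\subset\tilde{\mathcal F}_t$ --- is valid (the last step uses the paper's standing convention that all filtrations are completed by the $P$-null sets of $\mathcal H$; without it the inclusion holds only modulo null sets, which still suffices since the semimartingale property is insensitive to completion). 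Once $\mathbb F\subset\tilde{\mathbb F}$ is in hand, the conclusion is immediate by Stricker's theorem, or by your restriction-of-integrands argument, which is essentially a proof of Stricker's theorem. Notably, your observation sharpens the paper's own remark following the theorem that ``there is no general link between $\tilde{\mathbb F}$ and $\mathbb F$'': under the weak convergence hypothesis there is always the link $\mathbb F\subset\tilde{\mathbb F}$, and the paper's two examples (the trivial filtration, and the discretization example) are both consistent with this. What your shortcut does not buy, and what the paper's longer argument is really rehearsing, is the technique of projecting the integrands onto $\mathcal F^m_{t_i}$; that device is exactly what survives in Theorem \ref{T:semimg2} and its successors, where the strong hypothesis ``$X$ is an $\tilde{\mathbb F}$--semimartingale'' is replaced by ``$X$ is a $\mathbb G^n$--semimartingale for each $n$'' and no inclusion between the limit filtration and the approximating ones is available.
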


\begin{proof}
For a fixed $N>0$, consider a sequence of bounded, $\mathbb F$ predictable elementary processes of the form
$$
H^n_t = \sum_{i=1}^{k_n}h_i^n1_{]t^n_i,t^n_{i+1}]}(t);
$$
null outside the fixed time interval $[0,N]$ and with $h^n_i$ being $\mathcal F_{t^n_i}$~measurable. Suppose that $H^n$ converges to zero uniformly in $(\omega, t)$. We prove that $ J_X(H^n) \stackrel{P}{\rightarrow} 0$.

For each $m$, define the sequence of bounded $\mathbb F^m$~predictable elementary processes
$$
H^{n,m}_t = \sum_{i=1}^{k_n}E(h_i^n\mid\mathcal{F}^m_{t^n_i})1_{]t^n_i,t^n_{i+1}]}(t);
$$
By assumption, $\mathcal F^m_t\stackrel{w}{\rightarrow}\mathcal F_t$ for all $0\leq t\leq T$. Hence for all $n$ and $1\leq i\leq k_n$, $\mathcal F^m_{t_i^n}\stackrel{w}{\rightarrow}\mathcal F_{t_i^n}$. Since $h^n_i$ is bounded (hence integrable) and $\mathcal F_{t_i^n}$~measurable, it follows from Lemma \ref{L:sigma} that $E(h^n_i\mid\mathcal F^m_{t^n_i})\stackrel{P}{\rightarrow}h^n_i$ and hence $E(h^n_i\mid\mathcal F^m_{t^n_i})(X_{t_{i+1}^n}-X_{t_i^n})\stackrel{P}{\rightarrow}h^n_i(X_{t^n_{i+1}}-X_{t^n_i})$ for each $n$ and $1\leq i\leq k_n$ since $(X_{t^n_{i+1}}-X_{t^n_i})$ is finite~a.s. 
Let $\eta>0$.
$$
P\Big(\big|J_X(H^{n,m})-J_X(H^n)\big|>\eta\Big) \leq \sum_{i=1}^{k_n}P\Big(\Big|\big(E(h^n_i\mid\mathcal F^m_{t^n_i})-h^n_i\big)\big(X_{t^n_{i+1}}-X_{t^n_i}\big)\Big|>\frac{\eta}{k_n}\Big)
$$
For each fixed $n$, the right side quantity converges to $0$ as $m$ tends to $\infty$. This proves that for each $n$,
$$
J_X(H^{n,m})\stackrel{P}{\rightarrow}J_X(H^n).
$$
Let $\delta>0$ and $\epsilon>0$. For each $n$ and $m$,
\begin{equation}\label{eq:exchangelim}
P(|J_X(H^n)|>\delta)\leq P\Big(\big|J_X(H^{n,m})-J_X(H^n)\big|>\frac{\delta}{2}\Big)+P(|J_X(H^{n,m})|>\frac{\delta}{2})
\end{equation}
From $J_X(H^{n,m})\stackrel{P}{\rightarrow}J_X(H^n)$, it follows that for each $n$, there exists $M^n_0$ such that for all $m\geq M^n_0$,
$$
P\Big(\big|J_X(H^{n,m})-J_X(H^n)\big|>\frac{\delta}{2}\Big)\leq \frac{\epsilon}{2}
$$
Hence $P(|J_X(H^n)|>\delta)\leq \frac{\epsilon}{2}+P(|J_X(H^{n,M^n_0})|>\frac{\delta}{2})$. First $E(h_i^n\mid\mathcal F^{M^n_0}_{t^n_i})$ is bounded, $\tilde{\mathcal F}_{t^n_i}$~measurable so that $H_t^{n,M^n_0}=\sum_{i=1}^{k_n}E(h_i^n\mid\mathcal F^{M^n_0}_{t^n_i})1_{]t^n_i,t^n_{i+1}]}(t)$ is a bounded $\tilde{\mathbb F}$~predictable process. Since $H^n$ converges to zero uniformly in $(\omega, t)$, it follows that $h^n_i$ converges to zero uniformly in $(\omega, i)$ so that there exists $n_0$ such that for each $n\geq n_0$, for all $(\omega, i)$, $|h^n_i(\omega)|\leq \epsilon$. Hence, for all $(\omega, t)$ and $n\geq n_0$
$$
|H^{n,M^n_0}_t(\omega)| \leq \sum_{i=1}^{k_n}E(|h_i^n|\mid\mathcal{F}^{M^n_0}_{t^n_i})(\omega)1_{]t^n_i,t^n_{i+1}]}(t)\leq \epsilon \sum_{i=1}^{k_n}1_{]t^n_i,t^n_{i+1}]}(t) \leq \epsilon
$$
Therefore $H^{n,M^n_0}$ is a sequence of bounded $\tilde{\mathbb F}$~predictable processes null outside the fixed interval $[0,N]$ that converges uniformly to zero in $(\omega, t)$. Since by assumption $X$ is a $\tilde{\mathbb F}$~semimartingale, it follows from the converse of Bichteler-Dellacherie's theorem that $J_X(H^{n,M^n_0})$ converges to zero in probability, hence, for $n$ large enough, $P(|J_X(H^{n,M^n_0})|>\frac{\delta}{2})\leq \frac{\epsilon}{2}$ and
$$
P(|J_X(H^n)|>\delta)\leq\epsilon
$$
Applying now Theorem \ref{T:BichDell} proves that $X$ is an $\mathbb F$~semimartingale.
\end{proof}

Let $X$ be an $\tilde{\mathbb F}$~semimartingale. Theorem \ref{T:semimg} proves that $X$ remains an $\mathbb F$~semimartingale for any limiting filtration $\mathbb F$ (in the sense $\mathcal F^m_t \stackrel{w}{\rightarrow} \mathcal F_t$ for all $0\leq t\leq T$) to which $X$ is adapted. Of course, if $\mathbb F\subset\tilde{\mathbb F}$, Stricker's theorem already implies that $X$ is an $\mathbb F$~semimartingale. But there is no general link between the filtration $\tilde{\mathbb F}=\bigvee_m\mathbb F^m$ and the limiting filtration~$\mathbb F$. A trivial example is given by taking $\mathbb F$ to be the trivial filtration (it can be seen from Definition \ref{D:sigmafield} that the trivial filtration satisfies $\mathcal F^m_t\stackrel{w}{\rightarrow}\mathcal F_t$, for all $t$, for any given sequence of filtrations $\mathbb F^m$). One can also have $\bigvee_m\mathbb F^m\subset\mathbb F$, as it is the case in the following important example.

\begin{example}\label{ex:dis}
Let $X$ be a c\`adl\`ag process. Consider a sequence of subdivisions $\{t_k^n\}$ whose mesh tends to zero and let $X^n$ be the discretized process defined by $X^n_t=X_{t^n_k}$, for all $t^n_k\leq t <t^n_{k+1}$. Let $\mathbb F$ and $\mathbb F^n$ be the natural filtrations of $X$ and $X^n$. It is well known that for all $t$, $\mathcal F_{t^{-}} \subset \bigvee_{n}\mathcal F^n_t\subset\mathcal F_t$. Also, $X^n$ converges a.s.~to the process $X$, hence $X^n\stackrel{P}{\rightarrow}X$. Assume now that $X$ has no fixed times of discontinuity. Then Lemma \ref{L:conv} guarantees that $\mathcal F^n_t\stackrel{w}{\rightarrow}\mathcal F_t$, for all $t$. Moreover, if $\mathbb F$ is left-continuous (which is usually the case, and holds for example  when $X$ is a c\`adl\`ag Hunt Markov process) then $\bigvee_{n}\mathcal F^n_t=\mathcal F_t$ for all $t$.
\end{example}

We provide now another example where $\bigvee_n\mathcal F^n_t$ is itself a limiting $\sigma$-field for $(\mathcal F^n_t)_{n\geq 1}$, for each~$t$.

\begin{example}
Assume that $\mathbb F^n$ is a sequence of filtrations such that for all $t$, the sequence of $\sigma$-fields $(\mathcal F^n_t)_{n\geq 1}$ is increasing for the inclusion. Define $\tilde{\mathcal F}_t=\bigvee_n\mathcal F^n_t$. Then for each $t$, $\mathcal F^n_t\stackrel{w}{\rightarrow}\tilde{\mathcal F}_t$. To see this, fix $t$ and let $X$ be an integrable $\tilde{\mathcal F}_t$~measurable random variable. Then $M_n=E(X\mid\mathcal F^n_t)$ is a closed martingale and the convergence theorem for closed martingales ensures that $M_n$ converges to $X$ in $L^1$, which implies that $E(X\mid\mathcal F^n_t)\stackrel{P}{\rightarrow}X$. Lemma \ref{L:sigma} allows us to conclude.
\end{example}

Checking in practice that X is an $\tilde{\mathbb F}$~semimartingale can be a hard task. In subsequent sections, we replace the strong assumption \textit{$X$ is an $\tilde{\mathbb F}$~semimartingale} by the more natural assumption \textit{$X$ is an $\mathbb F^n$~semimartingale, for each $n$}. Theorem \ref{T:semimg} is very instructive since we see from the proof what goes wrong under this new assumption : the change in the order of limits in (\ref{eq:exchangelim}) cannot be justified anymore and extra integrability conditions will be needed. They are introduced in the next section. 

This assumption arises naturally in filtration expansion theory in the following way. Assume we are given a base filtration $\mathbb F$ and a sequence of processes $N^n$ which converges (in probability for the Skorohod $J_1$ topology) to some process $N$. Let $\mathbb N^n$ and $\mathbb N$ be their natural filtrations and $\mathbb G^n$ (resp. $\mathbb G$) the smallest right-continuous filtration containing $\mathbb{F}$ and to which $N^n$ (resp. $N$) is adapted. Assume that for each $n$, every $\mathbb F$~semimartingale remains a $\mathbb G^n$~semimartingale. Does this property also hold between $\mathbb F$ and $\mathbb G$? In the next section we answer this question under the assumption of weak convergence of the $\sigma$-fields $\mathcal G^n_t$ to $\mathcal G_t$ for each $t$, for a class of $\mathbb F$~semimartingales $X$ satisfying some integrability conditions. If moreover $\mathbb G^n\stackrel{w}{\rightarrow}\mathbb G$, we are able to provide the $\mathbb G$~decomposition of such~$X$. 

\section{Filtration expansion with processes}

In preparation for treating the expansion of filtrations via processes, we need to establish a general result on the convergence of semimartingales, which is perhaps of interest in its own right.  

\subsection{Convergence of semimartingales}

The following theorem is a generalization of the main result in \cite{BarlowProtter}.

\begin{theorem}\label{T:semimg2}
Let $(\mathbb{G}^n)_{n\geq 1}$ be a sequence of right-continuous filtrations and let $\mathbb{G}$ be a filtration such that $\mathcal{G}^n_t\stackrel{w}{\rightarrow}\mathcal{G}_t$ for all $t$. Let $(X^n)_{n\geq 1}$ be a sequence of $\mathbb{G}^n$~semimartingales with canonical decomposition $X^n = X^n_0 + M^n + A^n$. Assume there exists $K>0$ such that for all $n$, 
$$
E(\int_0^T|dA^n_s|) \leq K \qquad \text{and} \qquad E(\sup_{0\leq s\leq T}|M^n_s|)\leq K
$$
Then the following holds.
\begin{itemize}
	\item[(i)] Assume there exists a $\mathbb{G}$~adapted process $X$ such that $E(\sup_{0\leq s\leq T}|X^n_s-X_s|)\rightarrow 0$. Then $X$ is a $\mathbb{G}$~special semimartingale.
	
	\item[(ii)] Moreover, assume $\mathbb G$ is right-continuous and let $X=M+A$ be the canonical decomposition of $X$. Then $M$ is a $\mathbb{G}$ martingale and $\int_0^T|dA_s|$ and $\sup_{0\leq s\leq T} |M_s|$ are integrable.
\end{itemize}
\end{theorem}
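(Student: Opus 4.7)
The plan is to prove Part (i) by verifying the Bichteler--Dellacherie criterion (Theorem \ref{T:BichDell}) for $X$ in $\mathbb{G}$, using the same conditional-expectation surrogate device as in the proof of Theorem \ref{T:semimg}, but replacing the $\tilde{\mathbb{F}}$~semimartingale hypothesis on $X$ by the quantitative $L^1$ bounds on the canonical decompositions $X^n=X^n_0+M^n+A^n$. Given a sequence $(H^k)_{k\geq 1}$ of bounded $\mathbb{G}$~predictable elementary processes, null off $[0,N]$ and converging uniformly to zero, I define for each $m$ the $\mathbb{G}^m$~predictable surrogate
\[
H^{k,m}_t=\sum_{i} E\bigl(h^k_i\mid\mathcal{G}^m_{t^k_i}\bigr)\,1_{]t^k_i,t^k_{i+1}]}(t),
\]
which is bounded by $\|H^k\|_\infty$, and write
\[
|J_X(H^k)|\le|J_X(H^k)-J_X(H^{k,m})|+|J_X(H^{k,m})-J_{X^m}(H^{k,m})|+|J_{X^m}(H^{k,m})|.
\]
For fixed $k$, the first term tends to zero in probability as $m\to\infty$ by Lemma \ref{L:sigma} applied coefficient-wise, and the second by the hypothesis $E(\sup_s|X^m_s-X_s|)\to 0$ combined with the uniform bound $\|H^{k,m}\|_\infty\le\|H^k\|_\infty$.

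The crucial third term is handled via the $\mathbb{G}^m$~decomposition: the finite-variation piece is bounded in $L^1$ by $\|H^k\|_\infty\,E(\int_0^T|dA^m_s|)\le\|H^k\|_\infty K$, and the martingale piece by the Burkholder--Davis--Gundy (Davis) inequality,
\[
E\left(\sup_t\left|\int_0^t H^{k,m}\,dM^m\right|\right)\le C\|H^k\|_\infty E([M^m]_T^{1/2})\le C'\|H^k\|_\infty E(\sup|M^m|)\le C'\|H^k\|_\infty K.
\]
Both bounds are uniform in $m$ and proportional to $\|H^k\|_\infty$, so Markov's inequality and taking first $m\to\infty$ (for fixed $k$) and then $k\to\infty$ yield $P(|J_X(H^k)|>\delta)\le 3(1+C')\|H^k\|_\infty K/\delta\to 0$, so Theorem \ref{T:BichDell} makes $X$ a $\mathbb{G}$~semimartingale. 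Fatou applied to $\sup|X^m|\le|X^m_0|+\sup|M^m|+\int_0^T|dA^m|$ together with the $L^1$-sup convergence at $t=0$ then gives $E(\sup_s|X_s|)<\infty$, so $X$ is special.

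For Part (ii), the idea is to upgrade Part (i) from convergence in probability to an $L^1$ estimate. Extracting an almost-surely-convergent subsequence from $J_{X^m}(H^{k,m})\stackrel{P}{\to}J_X(H^k)$ and using Fatou with the $L^1$ bound $E|J_{X^m}(H^{k,m})|\le(1+C')\|H^k\|_\infty K$ just established yields
\[
E|J_X(H)|\le(1+C')\|H\|_\infty K
\]
for every bounded $\mathbb{G}$~predictable elementary $H$ on $[0,T]$. Plugging in $H_i=\mathrm{sgn}\bigl(E(X_{t_{i+1}}-X_{t_i}\mid\mathcal{G}_{t_i})\bigr)$ on an arbitrary partition of $[0,T]$ bounds the quasimartingale variation $\sup_\pi E\bigl(\sum_i|E(X_{t_{i+1}}-X_{t_i}\mid\mathcal{G}_{t_i})|\bigr)$ by $(1+C')K$, so $X$ is a $\mathbb{G}$~quasimartingale. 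Since $\mathbb{G}$ is right-continuous, Rao's theorem then gives $E(\int_0^T|dA_s|)\le(1+C')K$, and then $\sup|M|\le\sup|X-X_0|+\int_0^T|dA|$ is integrable, so the local martingale $M$ is dominated by an integrable random variable, hence uniformly integrable, hence a true $\mathbb{G}$~martingale.

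The main obstacle is the quantitative $L^1$ control of $\int H^{k,m}\,dM^m$: only an $L^1$ (not $L^2$) bound on $\sup|M^m|$ is hypothesized, so the straightforward orthogonal-increments calculation fails, and one must funnel the bound through $E([M^m]_T^{1/2})$ via Davis's inequality. A secondary check is that the Fatou-based upgrade of the probability bound into an $L^1$ bound is legitimate, which it is by extracting almost-sure subsequences from convergence in probability. Note that Lemma \ref{L:stop} does not enter this route; I would keep it in reserve for an alternative proof of Part (ii) based on optional sampling at approximated stopping times, should one wish to avoid the quasimartingale/Rao machinery.
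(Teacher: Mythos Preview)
Your argument is correct, but it takes a different route from the paper in both parts.

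\textbf{Part (i).} The paper bypasses Bichteler--Dellacherie entirely and goes straight to the quasimartingale criterion: it shows $|E((H\cdot X)_T)|\le K$ for every elementary $\mathbb G$-predictable $H$ with $|H|\le 1$, using only that $E((H^n\cdot M^n)_T)=0$ because $H^n\cdot M^n$ is a $\mathbb G^n$-martingale. No BDG/Davis is needed, since one is bounding an \emph{expectation} rather than a probability. Your route through Theorem~\ref{T:BichDell} works, but the Davis step is an avoidable detour; in fact your own Part (ii) argument (Fatou on $E|J_{X^m}(H^m)|$) already delivers $|E(J_X(H))|\le(1+C')K$, which is all Part (i) needs.

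\textbf{Part (ii).} Here the two approaches diverge substantially. The paper invokes Lemma~\ref{L:stop} to approximate a reducing sequence $(\tau_m)$ for $M$ by $\mathbb G^n$-stopping times $\tau_m^n$, and then bounds $|E((H\cdot A)_{\tau_m})|$ by splitting $(H\cdot X)_{\tau_m}-(H^n\cdot X^n)_{\tau_m^n}$ into three pieces and controlling each; this produces the extra modulus-of-continuity constant $C=\lim_{\eta\to 0}\limsup_n E(\sup_{s\le t\le s+\eta}|X^n_t-X^n_s|)$ in the final bound $E(\int_0^T|dA_s|)\le K+2C$. Your route via the quasimartingale variation and Rao is cleaner and avoids Lemma~\ref{L:stop} altogether. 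One caution: the bare statement ``Rao's theorem gives $E(\int_0^T|dA_s|)\le(1+C')K$'' hides a short argument. Rao alone gives semimartingale structure, not an integrable-variation bound; to get the latter you must push the estimate $|E(J_X(H))|\le(1+C')K$ from the terminal time to an arbitrary bounded stopping time $\tau$ (by absorbing $1_{\{\tau>t_i\}}$ into the elementary integrand when $\tau$ is simple, then approximating, which uses $E(\sup|X|)<\infty$), and only then conclude $E(\int_0^{\tau_m}|dA|)\le(1+C')K$ along a reducing sequence for $M$. With that step made explicit your argument is complete and shorter than the paper's.

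In summary: the paper's Part (i) is the more economical one, while your Part (ii) is the more economical one; your approach trades the bespoke Lemma~\ref{L:stop} for standard quasimartingale theory plus a brief stopping argument.
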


\begin{proof}\textbf{Part (i).} The idea of the proof of Part (i) is similar to the one in \cite{BarlowProtter}. First, $X$ is c\`adl\`ag since it is the a.s.~uniform limit of a subsequence of the c\`adl\`ag processes $(X^n)_{n\geq 1}$. Also, since $||X^n_0-X_0||_1\rightarrow 0$, we can take w.l.o.g $X_0^n=X_0=0$, and we do~so. The integrability assumptions guarantee that $E(\sup_s|X^n_s|)\leq 2K$ and up to replacing $K$ by $2K$, we assume that $E(\sup_s|M^n_s|)\leq K$, $E(\int_0^T|dA^n_s|)\leq K$ and $E(\sup_s|X^n_s|)\leq K$. Then $E(\sup_s|X_s|)\leq E(\sup_s|X_s-X^n_s|)+K$ and by taking limits $E(\sup_s|X_s|)\leq K$.

Let $H$ be a $\mathbb{G}$~predictable elementary process of the form $H_t=\sum_{i=1}^kh_i1_{]t_i,t_{i+1}]}(t)$, where $h_i$ is a $\mathcal{G}_{t_i}$~measurable random variable such that $|h_i|\leq 1$ and $t_1<\ldots <t_k<t_{k+1}=T$. Define now $H^n_t=\sum_{i=1}^kh^n_i1_{]t_i,t_{i+1}]}(t)$, where $h^n_i=E(h_i\mid\mathcal{G}^n_{t_i})$. Then $h^n_i$ is a $\mathcal{G}^n_{t_i}$~measurable random variable satisfying $|h^n_i|\leq 1$, hence $H^n$ is a bounded $\mathbb{G}^n$~predictable elementary process. It follows that $H^n\cdot M^n$ is a $\mathbb{G}^n$~martingale and for each $n$,
$$
|E((H^n\cdot X^n)_T)| \leq |E(\int_0^TH^n_sdA^n_s)|\leq E(\int_0^{T}|dA^n_s|)\leq K
$$
Therefore, for each $n$,
\begin{equation}\label{eq:lim}
|E((H\cdot X)_T)|\leq |E((H \cdot X)_T-(H^n\cdot X^n)_T)|+K
\end{equation}
Since $h_i$ is $\mathcal{G}_{t_i}$~measurable and $\mathcal{G}_t^n\stackrel{w}{\rightarrow}\mathcal{G}_t$ for all $t$, $h^n_i\stackrel{P}{\rightarrow}h_i$ for all $1\leq i\leq k$. Since the set $\{1,\ldots,k\}$ is finite, successive extractions allow us to find a subsequence $\psi(n)$ (independent from $i$) such that for each $1\leq i\leq k$, $h^{\psi(n)}_i$ converges a.s.~to $h_i$. So up to working with the $\mathbb{G}^{\psi(n)}$ predictable elementary processes $H^{\psi(n)}$ and the stochastic integrals $H^{\psi(n)}\cdot X^{\psi(n)}$ in (\ref{eq:lim}), we can assume that $h^n_i$ converges a.s.~to $h_i$, for each $1\leq i\leq k$.

Now, $|E((H\cdot X)_T-(H^n\cdot X^n)_T)|\leq \sum_{i=1}^kE(|h_iY_i-h^n_iY^n_i|)$ where $Y_i=X_{t_{i+1}}-X_{t_i}$ and $Y^n_i=X^n_{t_{i+1}}-X^n_{t_i}$. Each term in the sum can be bounded as follows.
\begin{align*}
E(|h_iY_i-&h^n_iY^n_i|)\leq E(|Y^n_i(h^n_i-h_i)|)+E(|h_i(Y^n_i-Y_i)|)\\
&\leq 2E(\sup_s|X^n_s| |h^n_i-h_i|) + E(|Y^n_i-Y_i|)\\
&\leq 2E(\sup_s|X^n_s-X_s| |h^n_i-h_i|) + 2E(\sup_s|X_s| |h^n_i-h_i|) + 2E(\sup_s|X^n_s-X_s|)\\
&\leq 6E(\sup_s|X^n_s-X_s|) + 2E(\sup_s|X_s| |h^n_i-h_i|)
\end{align*}
Since $\sup_s|X_s| |h^n_i-h_i|$ converges a.s to zero and that for all $n$, $|h^n_i|\leq 1$, hence $\sup_s|X_s| |h^n_i-h_i|\leq 2\sup_s|X_s|$ and $\sup_s|X_s|\in L^1$, the Dominated Convergence Theorem implies that $E(\sup_s|X_s| |h^n_i-h_i|)\rightarrow 0$. Since by assumption $E(\sup_s|X^n_s-X_s|)\rightarrow 0$, it follows that $|E((H\cdot X)_T-(H^n\cdot X^n)_T)|$ converges to $0$. Letting $n$ tend to infinity in (\ref{eq:lim}) gives $|E((H\cdot X)_T)| \leq K$. So $X$ is a $\mathbb{G}$~quasimartingale, hence a $\mathbb{G}$~special semimartingale. Therefore $X$ has a $\mathbb G$ canonical decomposition $X=M+A$ where M is a $\mathbb G$~local martingale and $A$ is a $\mathbb G$~predictable finite variation process.

\textbf{Part(ii).} Let $(\tau_m)_{m\geq 1}$ be a sequence of bounded $\mathbb G$~stopping times that reduces $M$. Since for all $t$, $\mathcal{G}^m_t\stackrel{w}{\rightarrow}\mathcal{G}_t$, it follows from Lemma \ref{L:stop} that for each $m$ there exist a function $\phi_m$ strictly increasing and a sequence $(\tau_m^n)_{n\geq 1}$ such that $(\tau_m^{\phi_m(n)})_{n\geq 1}$ converges in probability to $\tau_m$ and $\tau_m^{\phi_m(n)}$ are bounded $\mathbb G^{\phi_m(n)}$ stopping times. We can extract a subsequence $(\tau_m^{\phi_m(\psi_m(n))})_{n\geq 1}$ converging a.s.~to $\tau_m$. In order to simplify the notation, fix $m\geq 1$ and up to working with $\tilde{\mathbb G}^n=\mathbb G^{\phi_m(\psi_m(n))}$ instead of $\mathbb G^n$ (which satisfies the same assumptions), take $\Phi_m := \phi_m \circ \psi_m$ to be the identity. Let $H$ be a $\mathbb G$~elementary predictable process as defined in Part (i). Since $\tau_m$ reduces $M$, $E((H\cdot A)_{\tau_m}) = E((H\cdot X)_{\tau_m})$. We can write
$$
E((H\cdot A)_{\tau_m})=E\big((H\cdot X)_{\tau_m}-(H^n\cdot X^n)_{\tau_m}\big) + E\big((H^n\cdot X^n)_{\tau_m}-(H^n\cdot X^n)_{\tau_m^n}\big) + E\big((H^n\cdot X^n)_{\tau_m^n}\big)
$$

We start with the third term. Since $H^n\cdot M^n$ is a $\mathbb G^n$~martingale and $\tau_m^n$ is a bounded $\mathbb G^n$~stopping time, it follows from Doob's optional sampling theorem that $E((H^n\cdot X^n)_{\tau_m^n}) = E((H^n\cdot A^n)_{\tau_m^n})$, hence $|E((H^n\cdot X^n)_{\tau_m^n})|\leq E(\int_0^{\tau_m}|dA^n_s|)\leq K$.

We focus now on the first term. Let $Y^i_s=X_s-X_{t_i}$ and $Y^{i,n}_s=X^n_s-X^n_{t_i}$.
\begin{align*}
E_1 &:= |E\big((H\cdot X)_{\tau_m}-(H^n\cdot X^n)_{\tau_m}\big)|\leq E\Big(\sup_{0\leq s\leq T}\big|(H\cdot X)_{s}-(H^n\cdot X^n)_{s}\big|\Big)\\
&\leq E\Big(\sum_{i=1}^k\sup_{t_i<s\leq t_{i+1}}\big|h_iY^i_s-h_i^nY^{i,n}_s\big|\Big)\\
&\leq \sum_{i=1}^k E\Big(\sup_{t_i<s\leq t_{i+1}} |Y^{i,n}_s||h^n_i-h_i| + \sup_{t_i<s\leq t_{i+1}} |h_i||Y^{i,n}_s-Y^i_s|\Big)
\end{align*}
Since $|h^i|\leq 1$, $|Y^{i,n}_s|\leq 2 \sup_{u}|X^n_u|$ and $|Y^{i,n}_s-Y^i_s|\leq 2 \sup_{u}|X^n_u-X_u|$, it follows that
\begin{align*}
E_1 &\leq 2 \sum_{i=1}^k \big\{ E\big(\sup_{0\leq u\leq T} |X^n_s||h^n_i-h_i|\big) + E\big(\sup_{0\leq u\leq T} |X^n_u-X_u|\big)\big\}\\
&\leq 6kE(\sup_{0\leq s\leq T}|X^n_s-X_s|)+2\sum_{i=1}^k\{E\big(\sup_{0\leq s\leq T}|X_s||h^n_i-h_i|\big)\}
\end{align*}

We study now the second term $E_2:=E\big((H^n\cdot X^n)_{\tau_m}-(H^n\cdot X^n)_{\tau_m^n}\big)$. Let $0<\eta<\min_i |t_{i+1}-t_i|$, and define $Y^n=H^n\cdot X^n$. Write now
$$
E\big(|Y^n_{\tau_m}-Y^n_{\tau^n_m}|\big)=E\big(|Y^n_{\tau_m}-Y^n_{\tau^n_m}|1_{\{|\tau_m-\tau_m^n|\leq \eta\}}\big)+E\big(|Y^n_{\tau_m}-Y^n_{\tau^n_m}|1_{\{|\tau_m-\tau_m^n|> \eta\}}\big)=: e_1 + e_2
$$
We study each of the two terms separately. We start with $e_2$.
\begin{align*}
e_2&\leq E\big((|Y^n_{\tau_m}|+|Y^n_{\tau^n_m}|)1_{\{|\tau_m-\tau_m^n|> \eta\}}\big)\leq 2 E\Big(\sum_{i=1}^k\sup_{t_i<s\leq t_{i+1}}|X^n_s-X^n_{t_i}|1_{\{|\tau_m-\tau_m^n|> \eta\}}\Big)\\
&\leq 4 k E\Big(\sup_{0\leq u\leq T}|X^n_u|1_{\{|\tau_m-\tau_m^n|> \eta\}}\Big)\\
&\leq 4 k E\Big(\sup_{0\leq u\leq T}|X^n_u-X_u|\Big)+4 k E\Big(\sup_{0\leq u\leq T}|X_u|1_{\{|\tau_m-\tau_m^n|> \eta\}}\Big)
\end{align*}
We study now $e_1$. On $\{|\tau_m-\tau^n_m|\leq \eta\}$ and since $\eta<\min_i|t_{i+1}-t_i|$, we have $|Y^n_{\tau_m}-Y^n_{\tau^n_m}|\leq 2\sup_{s\leq t\leq s+\eta}|X^n_t-X^n_s|$. In fact, one of the two following cases is possible for $\tau_m$ and $\tau_m^n$. Either they are both in the same interval $(t_i, t_{i+1}]$, in which case, $|Y^n_{\tau_m}-Y^n_{\tau^n_m}|=|h^n_i(X^n_{\tau_m}-X^n_{\tau_m^n})|\leq \sup_{s\leq t\leq s+\eta}|X^n_t-X^n_s|$, or they are in two consecutive intervals. For the second case, take for example $t_{i-1}<\tau_m\leq t_i<\tau_m^n\leq t_{i+1}$, then 
\begin{align*}
|Y^n_{\tau_m}-Y^n_{\tau^n_m}|&=|h^n_{i-1}(X^n_{\tau_m}-X^n_{t_{i-1}})-h^n_{i-1}(X^n_{t_i}-X^n_{t_{i-1}})-h^n_i(X^n_{\tau_m^n}-X^n_{t_i})|\\
&=|h^n_{i-1}(X^n_{\tau_m}-X^n_{t_i})-h^n_i(X^n_{\tau_m^n}-X^n_{t_i})|\leq |X^n_{\tau_m}-X^n_{t_i}|+|X^n_{\tau_m^n}-X^n_{t_i}|\\
&\leq 2\sup_{s\leq t\leq s+\eta}|X^n_t-X^n_s|
\end{align*}
The case $t_{i-1}<\tau_m^n\leq t_i<\tau_m\leq t_{i+1}$ is similar. Hence
$$
e_1\leq 2 E(\sup_{s\leq t\leq s+\eta}|X^n_t-X^n_s|1_{\{|\tau_m-\tau_m^n|\leq \eta\}})\leq 2 E(\sup_{s\leq t\leq s+\eta}|X^n_t-X^n_s|)
$$
Putting all this together yields for each $0<\eta<\min_{i}|t_{i+1}-t_i|$ and each $n$
\begin{align*}
|E(H\cdot A)_{\tau_m}|\leq K + &2 E(\sup_{s\leq t\leq s+\eta}|X^n_t-X^n_s|) + 2\sum_{i=1}^kE(\sup_{0\leq s\leq T}|X_s||h^n_i-h_i|)\\
&+ 4kE(\sup_{0\leq s\leq T}|X_s|1_{\{|\tau_m-\tau_m^n|>\eta\}}) + 10 k E\big(\sup_{0\leq u\leq T} |X^n_u-X_u|\big)
\end{align*}
Getting back to the general case, we obtain for each $m\geq 1$ and each $n\geq 1$,
\begin{align*}
|E(H\cdot A)_{\tau_m}|\leq K + &2 E(\sup_{s\leq t\leq s+\eta}|X^{\Phi_m(n)}_t-X^{\Phi_m(n)}_s|) + 2\sum_{i=1}^kE(\sup_{0\leq s\leq T}|X_s||h^{\Phi_m(n)}_i-h_i|)\\
&+ 4kE(\sup_{0\leq s\leq T}|X_s|1_{\{|\tau_m-\tau_m^{\Phi_m(n)}|>\eta\}}) + 10 k E\big(\sup_{0\leq u\leq T} |X^{\Phi_m(n)}_u-X_u|\big)
\end{align*}
As in the proof of Part (i), successive extractions allow us to find $\lambda_m(n)$ (independent from $i$) such that for all $1\leq i\leq k$, $h^{\lambda_m(n)}_i$ converges a.s.~to $h_i$. Letting $n$ go to infinity in
\begin{align*}
|E(H\cdot A)_{\tau_m}|\leq K + &2 E(\sup_{s\leq t\leq s+\eta}|X^{\lambda_m(n)}_t-X^{\lambda_m(n)}_s|) + 2\sum_{i=1}^kE(\sup_{0\leq s\leq T}|X_s||h^{\lambda_m(n)}_i-h_i|)\\
&+ 4kE(\sup_{0\leq s\leq T}|X_s|1_{\{|\tau_m-\tau_m^{\lambda_m(n)}|>\eta\}}) + 10 k E\big(\sup_{0\leq u\leq T} |X^{\lambda_m(n)}_u-X_u|\big)
\end{align*}
gives the estimate
$$
|E(H\cdot A)_{\tau_m}|\leq K + 2 \limsup_{n\rightarrow\infty}E(\sup_{s\leq t\leq s+\eta}|X^n_t-X^n_s|)
$$
Let $\lim_{\eta\rightarrow 0}\limsup_{n\rightarrow\infty}E(\sup_{s\leq t\leq s+\eta}|X^n_s-X^n_t|)=C$. Since $E(\sup_{s\leq t\leq s+\eta}|X^n_t-X^n_s|)\leq 2E(\sup_u|X^n_u|)\leq 2(E(\sup_u|M^n_u|+\int_0^T|dA^n_s|))\leq 4K$, $C<\infty$. Now letting $\eta$ go to zero yields finally $|E(H\cdot A)_{\tau_m}|\leq K+2C$, for each $m$. Thus $E(\int_0^{\tau_m}|dA_s|)\leq K+2C$, for each~$m$ and hence $E(\int_0^T|dA_s|)\leq K+2C$.

Now, $M=X-A=(X-X^n)+M^n+A^n-A$, and so
$$
\sup_{0\leq s\leq T}|M_s|\leq \sup_{0\leq s\leq T}|X_s-X^n_s|+\sup_{0\leq s\leq T}|M^n|+\int_0^T|dA^n_s|+\int_0^T|dA_s|
$$
Thus $E(\sup_{0\leq s\leq T}|M_s|)\leq 3K + 2C$ and $M$ is a $\mathbb G$~martingale.
\end{proof}

Once one obtains that $X$ is a $\mathbb G$~special semimartingale, one can be interested in characterizing the martingale $M$ and the finite variation predictable process $A$ in terms of the processes $M^n$ and $A^n$. M\'emin (Theorem 11 in \cite{Memin}) achieved this under ``extended convergence.'' Recall that $(X^n, \mathbb G^n)$ converges to $(X, \mathbb G)$ in the extended sense if for every $G\in \mathbb G_T$, the sequence of c\`adl\`ag processes $(X^n_t, E(1_G\mid\mathcal G^n_t))_{0\leq t\leq T}$ converges in probability under the Skorohod $J_1$ topology to $(X_t, E(1_G\mid\mathcal G_t))_{0\leq t\leq T}$. The author proves the following theorem. We refer to \cite{Memin} for a proof. In the theorem below, $\mathbb G^n$ and $\mathbb G$ are right-continuous filtrations.

\begin{theorem}\label{T:memin}
Let $(X^n)_{n\geq 1}$ be a sequence of $\mathbb G^n$~special semimartingales with canonical decompositions $X^n=M^n+A^n$ where $M^n$ is a $\mathbb G^n$~martingale and $A^n$ is a $\mathbb G^n$~predictable finite variation process. We suppose that the sequence $([X^n, X^n]_T^{\frac{1}{2}})_{n\geq 1}$ is uniformly integrable and that the sequence $(V(A^n)_T)_{n\geq 1}$ (where $V$ denotes the variation process) of real random variables is tight in $\mathbb R$. Let $X$ be a $\mathbb G$~quasi-left continuous special semimartingale with a canonical decomposition $X=M+A$ such that $([X, X]_T^{\frac{1}{2}})<\infty$.

If the extended convergence $(X^n, \mathbb G^n) \rightarrow (X, \mathbb G)$ holds, then $(X^n, M^n, A^n)$ converges in probability under the Skorohod $J_1$ topology to $(X, M, A)$.
\end{theorem}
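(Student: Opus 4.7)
The plan is to follow the classical stability-of-decomposition strategy: first establish joint tightness of $(X^n, M^n, A^n)$ in an appropriate Skorohod space, then extract subsequential limits and, using the extended convergence hypothesis to transport filtrations, identify those limits as $(X,M,A)$ via uniqueness of the canonical decomposition. Because the full sequence $(X^n)$ already converges to $X$ by assumption, once every convergent subsequence of $(M^n, A^n)$ is shown to have the same limit $(M,A)$, convergence in probability of the full sequence follows.

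First I would establish tightness. Uniform integrability of $([X^n,X^n]_T^{1/2})_{n \geq 1}$ combined with the Burkholder--Davis--Gundy inequality gives uniform integrability of $\sup_{0 \leq s \leq T} |M^n_s|$, and tightness of $(V(A^n)_T)_{n \geq 1}$ controls the finite variation parts. Together with an Aldous-type modulus of continuity estimate (applied to $M^n$ using its quadratic variation, and to $A^n$ using its total variation), this yields tightness of $(M^n, A^n)_{n \geq 1}$ in the Skorohod $J_1$ topology on $\mathbb{D}([0,T];\mathbb{R}^2)$. Extracting a subsequence, I may assume $(X^n, M^n, A^n, E(\mathbf{1}_G \mid \mathcal{G}^n_\cdot))$ converges in law jointly for each $G \in \mathcal{G}_T$; the extended convergence hypothesis pins down the marginals involving $X^n$ and the conditional probability martingales.

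The central step — and the main obstacle — is identifying the subsequential limit $(\widetilde M, \widetilde A)$ with $(M,A)$. For $\widetilde M$ to be a $\mathbb{G}$-martingale one must argue that for $s < t$ and $G \in \mathcal{G}_s$, the identity $E((M^n_t - M^n_s)\mathbf{1}_{G^n}) = 0$ with $G^n$ approximating $G$ in the sense of extended convergence passes to the limit. This is where extended convergence is essential: the joint Skorohod convergence of $(M^n, E(\mathbf{1}_G \mid \mathcal{G}^n_\cdot))$ together with uniform integrability (from the BDG bound) permits interchange of limit and expectation at continuity points of the limit, and a density/right-continuity argument in $t$ extends the martingale relation to all of $[0,T]$. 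For $\widetilde A$ to be $\mathbb{G}$-predictable, I would invoke the quasi-left continuity of $X$: the predictability of each $A^n$ means its jumps occur only at predictable times, and quasi-left continuity of $X$ together with Skorohod convergence forces the jumps of $\widetilde A$ to accumulate at totally inaccessible times only if the jumps of $\widetilde M$ do — since $\widetilde A$ must absorb the non-martingale jumps, this yields predictability of $\widetilde A$.

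Finally, $X = \widetilde M + \widetilde A$ is a canonical decomposition of the $\mathbb{G}$-special semimartingale $X$, and by uniqueness of such decompositions $\widetilde M = M$, $\widetilde A = A$. Since every subsequence of $(M^n, A^n)$ has a further subsequence converging in distribution to the deterministic limit $(M,A)$ jointly with $X^n \to X$, and the limit is identified in terms of the original $X$ on the same probability space, joint convergence in probability in the Skorohod topology follows, giving $(X^n, M^n, A^n) \stackrel{P}{\to} (X,M,A)$. The hardest and most delicate piece is the martingale identification at step three, because extended convergence mixes weak convergence of processes with weak convergence of $\sigma$-fields, and care is needed in choosing continuity points and in applying uniform integrability to justify passing to the limit inside conditional expectations.
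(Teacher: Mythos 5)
The paper does not actually prove Theorem~\ref{T:memin}: it is quoted from M\'emin and the reader is referred to \cite{Memin} for the proof, so there is no in-paper argument to compare against. Your skeleton (tightness, identification of subsequential limits via uniqueness of the canonical decomposition, upgrade to convergence in probability) is indeed the standard strategy, but several individual steps fail as written. First, uniform integrability of $\sup_{s}|M^n_s|$ does not follow from BDG and the uniform integrability of $[X^n,X^n]_T^{1/2}$: one has $[M^n,M^n]_T^{1/2}\le [X^n,X^n]_T^{1/2}+V(A^n)_T$, and the variations are only assumed \emph{tight}, not uniformly integrable, so a localization on the level sets of $V(A^n)$ is unavoidable. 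More seriously, tightness of $V(A^n)_T$ plus an Aldous-type estimate does not give $J_1$-tightness of $(A^n)$: the deterministic spike $A^n=1_{[1/2,1]}-1_{[1/2+1/n,1]}$ has total variation $2$ for every $n$ but is not relatively compact in $J_1$ (its two jumps coalesce), and Aldous's criterion fails for it. The $J_1$-tightness of the finite variation parts genuinely requires the interplay between the convergence of $X^n$, the martingale property of $M^n$ and the predictability of $A^n$ (this is where the P-UT machinery of Jakubowski--M\'emin--Pag\`es and Jacod--Shiryaev enters in M\'emin's argument), not merely a bound on the variation at time $T$.

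Second, your predictability argument for the limit $\widetilde A$ is not a proof: the way quasi-left continuity of $X$ is actually exploited is that it forces the compensator $A$ to be \emph{continuous}, so one should establish continuity of the candidate limit (hence predictability) rather than reason informally about which jumps $\widetilde A$ ``absorbs.'' Finally, $(M,A)$ is a random, not deterministic, limit, so the subsequence principle for convergence in law does not by itself yield convergence in probability. You need the lemma that if $(\xi^n,\eta^n)$ converges in law to $(\xi,\eta)$ with $\xi^n\to\xi$ in probability and $\eta$ measurable with respect to $\sigma(\xi)$, then $\eta^n\to\eta$ in probability, applied with $\xi^n$ collecting $X^n$ together with a countable separating family of the martingales $E(1_G\mid\mathcal G^n_\cdot)$; and you must verify that $A$ is measurable with respect to that limiting data (it depends on the filtration $\mathbb G$, not on $X$ alone). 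As written, this last step is asserted rather than proved. The martingale identification itself is right in spirit, modulo the missing uniform integrability and the restriction to continuity points of the limiting processes.
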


In a filtration expansion setting, the sequence $X^n$ is constant and equal to some semimartingale $X$ of the base filtration. In this case the extended convergence assumption in Theorem \ref{T:memin} reduces to the weak convergence of the filtrations. We can deduce the following corollary from Theorems \ref{T:semimg2} and \ref{T:memin}.

\begin{corollary}\label{cor:semimg}
Let $(\mathbb G^n)_{n\geq 1}$ be a sequence of right-continuous filtrations and let $\mathbb G$ be a filtration such that $\mathcal G^n_t\stackrel{w}{\rightarrow}\mathcal G_t$ for all $t$. Let $X$ be a stochastic process such that for each $n$, $X$ is a $\mathbb G^n$~semimartingale with canonical decomposition $X = M^n + A^n$ such that there exists $K>0$, $E(\int_0^T|dA^n_s|) \leq K$ and $E(\sup_{0\leq s\leq T}|M^n_s|)\leq K$ for all $n$. Then
\begin{itemize}
	\item[(i)] If $X$ is $\mathbb G$~adapted, then $X$ is a $\mathbb G$~special semimartingale. 

	\item[(ii)] Assume moreover that $\mathbb G$ is right-continuous and let $X=M+A$ be the canonical decomposition of $X$. Then $M$ is a $\mathbb G$ martingale and $\sup_{0\leq s\leq T} |M_s|$ and $\int_0^T|dA_s|$ are integrable. 

	\item[(iii)] Furthermore, assume that $X$ is $\mathbb G$~quasi-left continuous and $\mathbb G^n \stackrel{w}{\rightarrow} \mathbb G$. Then $(M^n, A^n)$ converges in probability under the Skorohod $J_1$ topology to $(M,A)$.
\end{itemize}
\end{corollary}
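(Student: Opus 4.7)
The plan is to derive parts (i) and (ii) as a direct application of Theorem \ref{T:semimg2} to the \emph{constant} sequence $X^n \equiv X$, and then to derive part (iii) by invoking M\'emin's Theorem \ref{T:memin}, using part (ii) to meet its integrability hypotheses.

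For (i) and (ii), I would set $X^n := X$ for every $n$; this is a $\mathbb G^n$-semimartingale by hypothesis, with canonical decomposition $X^n = M^n + A^n$ obeying the uniform bounds $E(\int_0^T|dA^n_s|)\leq K$ and $E(\sup_{0\leq s\leq T}|M^n_s|)\leq K$. The convergence hypothesis $E(\sup_{0\leq s\leq T}|X^n_s - X_s|) \to 0$ of Theorem \ref{T:semimg2} is trivially verified, the expectation being identically zero. Since $X$ is $\mathbb G$-adapted and $\mathcal G^n_t \stackrel{w}{\to} \mathcal G_t$ for all $t$, Theorem \ref{T:semimg2}(i) yields that $X$ is a $\mathbb G$-special semimartingale; if moreover $\mathbb G$ is right-continuous, Theorem \ref{T:semimg2}(ii) gives the integrability of $\sup_{0\leq s\leq T}|M_s|$ and $\int_0^T|dA_s|$ in the canonical decomposition $X = M + A$. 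This handles (i) and (ii).

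For (iii), I would check each hypothesis of Theorem \ref{T:memin} for the constant sequence $X^n \equiv X$. First, uniform integrability of $([X^n,X^n]_T^{1/2})_{n\geq 1}$ reduces to integrability of $[X,X]_T^{1/2}$. Using the decomposition $X = M + A$ supplied by (ii), together with the subadditivity $[X,X]_T^{1/2} \leq [M,M]_T^{1/2} + [A,A]_T^{1/2}$, the bound $[A,A]_T^{1/2} \leq V(A)_T$, and the Burkholder--Davis--Gundy inequality $E([M,M]_T^{1/2}) \leq c\, E(\sup_{0\leq s\leq T}|M_s|)$, one obtains $E([X,X]_T^{1/2}) < \infty$; the same estimate covers the auxiliary requirement $[X,X]_T^{1/2} < \infty$. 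Tightness of $(V(A^n)_T)_{n\geq 1}$ is immediate from the uniform bound $E(V(A^n)_T) \leq K$ via Markov's inequality. Since $X^n \equiv X$, the extended convergence $(X^n, \mathbb G^n) \to (X, \mathbb G)$ collapses to $\mathbb G^n \stackrel{w}{\to} \mathbb G$, which is assumed. Combined with the quasi-left continuity of $X$, Theorem \ref{T:memin} delivers the convergence of $(X^n, M^n, A^n)$ to $(X, M, A)$ in probability under the Skorohod $J_1$ topology, and projecting onto the last two coordinates gives (iii).

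The bulk of the work is already done in Theorems \ref{T:semimg2} and \ref{T:memin}; the only real obstacle is the bookkeeping needed to verify M\'emin's hypotheses from the data in hand, in particular passing from the $L^1$ bound on $\sup_s|M_s|$ to an $L^1$ bound on $[X,X]_T^{1/2}$ via BDG. Everything else is a direct translation of assumptions.
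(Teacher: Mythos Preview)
Your proposal is correct and follows essentially the same route as the paper: apply Theorem~\ref{T:semimg2} to the constant sequence $X^n\equiv X$ for (i)--(ii), then feed the result into M\'emin's Theorem~\ref{T:memin} for (iii). If anything, your verification of $E([X,X]_T^{1/2})<\infty$ via BDG and the decomposition from (ii) is slightly more careful than the paper's unargued assertion that $[X,X]_T\in L^1$.
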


\begin{proof}
The sequence $X^n=X$ clearly satisfies the assumptions of Theorem \ref{T:semimg2}, and the two first claims follow. For the last claim, notice that $[X,X]_T \in L^1$, so $\sqrt{[X,X]_T}\in L^1$ and hence $(\sqrt{[X,X]_t})_{0\leq t\leq T}$ is a uniformly integrable family of random variables. The tightness of the sequence of random variables $(V(A^n)_T)_{n\geq 1}$ follows from $E(\int_0^T|dA^n_s|) \leq K$ for any $n$ and some $K$ independent from~$n$.
\end{proof}

\subsection{Applications to filtration expansions}

We provide in this subsection a first application to the initial and progressive filtration expansions with a random variable and a general theorem on the progressive expansion with a process. We assume in the sequel that a right-continuous filtration $\mathbb F$ is given.

\subsubsection{Initial and progressive filtration expansions with a random variable}

Assume that $\mathbb F$ is the natural filtration of some c\`adl\`ag process. Let $\tau$ be a random variable and $\mathbb H$ and $\mathbb G$ the initial and progressive expansions of $\mathbb F$ with~$\tau$. In this subsection, the filtration $\mathbb G$ is considered only when $\tau$ is non negative. It is proved in \cite{KLP} that if $\tau$ satisfies Jacod's criterion i.e. if there exists a $\sigma$-finite measure $\eta$ on $\mathcal{B}(\mathbb R)$ such that 
$$
P(\tau \in \cdot\mid\mathcal F_t)(\omega) \ll \eta(.) \quad \text{a.s.}
$$
then every $\mathbb F$~semimartingale remains an $\mathbb H$ and $\mathbb G$~semimartingale. That it is an $\mathbb H$~semimartingale is due to Jacod \cite{Jacod:1987}. That it is also a $\mathbb G$~semimartingale  follows from Stricker's theorem. Its $\mathbb G$~decomposition is obtained in \cite{KLP} and this relies on the fact that these two filtrations \textit{coincide} after $\tau$. We provide now a similar but partial result for a random variable $\tau$ which may not satisfy Jacod's criterion. Assume there exists a sequence of random times $(\tau_n)_{n\geq 0}$ converging in probability to $\tau$ and let $\mathbb H^n$ be the initial expansion of $\mathbb F$ with $\tau_n$. The following holds.

\begin{theorem}\label{T:ini}
Let $M$ be an $\mathbb F$~martingale such that $\sup_{0 \leq t\leq T}|M_t|$ is integrable. Assume there exists an $\mathbb H^n$ predictable finite variation process $A^n$ such that $M-A^n$ is an $\mathbb H^n$ martingale. If there exists $K$ such that $E(\int_0^T|dA^n_s|)\leq K$ for all $n$, then $M$ is an $\mathbb H$ and $\mathbb G$ semimartingale. 
\end{theorem}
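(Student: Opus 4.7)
The strategy is to apply Corollary~\ref{cor:semimg}(i) with the sequence of right-continuous filtrations $\mathbb H^n$ and limiting filtration $\mathbb H$, taking the $\mathbb F$-martingale $M$ as the process $X$. Once this delivers the $\mathbb H$-semimartingale property, the $\mathbb G$-conclusion is free: $\tau$ is an $\mathbb H$-stopping time and $\mathbb H$ is right-continuous, so by minimality $\mathbb G\subset \mathbb H$ and Stricker's theorem applies. So the whole problem reduces to verifying the hypotheses of Corollary~\ref{cor:semimg}(i) in the filtration $\mathbb H^n \to \mathbb H$ setting.

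The integrability hypothesis is straightforward. By assumption, $M$ is an $\mathbb H^n$-special semimartingale, and the $\mathbb H^n$-martingale part can be taken to be $N^n := M - M_0 - A^n$. The pointwise bound $|N^n_t| \le |M_t - M_0| + \int_0^T |dA^n_s|$ combined with the assumed bound $E(\int_0^T|dA^n_s|)\le K$ and the integrability of $\sup_{0\le t\le T}|M_t|$ gives
\[
E\bigl(\sup_{0\le t\le T}|N^n_t|\bigr)\;\le\;2\,E\bigl(\sup_{0\le t\le T}|M_t|\bigr) + K,
\]
which is finite and uniform in $n$.

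The second step is the weak convergence $\mathcal H^n_t \stackrel{w}{\rightarrow}\mathcal H_t$ for every $t$. I would first handle $\sigma(\tau_n)\stackrel{w}{\rightarrow}\sigma(\tau)$ by viewing $\tau_n$ and $\tau$ as constant-in-time (hence c\`adl\`ag with no fixed times of discontinuity) processes on $[0,T]$; since $\tau_n\stackrel{P}{\rightarrow}\tau$ translates to Skorohod convergence of these constant processes, Lemma~\ref{L:conv} yields the desired weak convergence of $\sigma$-fields. Combining this with the trivial convergence $\mathcal F_t\stackrel{w}{\rightarrow}\mathcal F_t$ via Lemma~\ref{L:vee} gives $\mathcal F_t\vee\sigma(\tau_n)\stackrel{w}{\rightarrow}\mathcal F_t\vee\sigma(\tau)$. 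Since $\mathcal F_t\vee\sigma(\tau_n)\subseteq\mathcal H^n_t$, Lemma~\ref{L:subset} then upgrades this to $\mathcal H^n_t\stackrel{w}{\rightarrow}\mathcal F_t\vee\sigma(\tau)$.

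The main obstacle is that the $\sigma$-field appearing as the limit above is $\mathcal F_t\vee\sigma(\tau)$, which is a priori only contained in $\mathcal H_t$, the latter being the right-continuous modification. To close the gap, I would apply Corollary~\ref{cor:semimg}(i) with the non-right-continuous limiting filtration $(\mathcal F_t\vee\sigma(\tau))_{t\geq 0}$, which is permitted since part~(i) of that corollary does not require right-continuity of the limit $\mathbb G$, and $M$ is clearly adapted to it. This produces the special semimartingale property in $(\mathcal F_t\vee\sigma(\tau))_{t}$, and invoking the standard fact that the semimartingale property is preserved when passing to the right-continuous modification of a filtration concludes that $M$ is an $\mathbb H$-semimartingale. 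The $\mathbb G$-case then follows from $\mathbb G\subset\mathbb H$ and Stricker's theorem as noted at the outset.
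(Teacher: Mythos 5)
Your proposal is correct and its skeleton is the paper's: verify the uniform bounds, obtain pointwise weak convergence of the $\sigma$-fields, apply Corollary~\ref{cor:semimg}(i), and finish with Stricker's theorem for $\mathbb G$. The one place where you genuinely diverge is the weak-convergence step. The paper asserts $\mathcal H^n_t\stackrel{w}{\rightarrow}\mathcal H_t$ directly (for the right-continuous $\sigma$-fields), leaning on the standing assumption of that subsection that $\mathbb F$ is the natural filtration of a c\`adl\`ag process and on ``the same techniques as in Lemmas~\ref{L:carac} and~\ref{L:conv}.'' You instead identify the limit only as $\mathcal F_t\vee\sigma(\tau)$ --- via the clean chain Lemma~\ref{L:conv} (applied to the constant-in-time processes $\tau_n$, $\tau$), Lemma~\ref{L:vee}, Lemma~\ref{L:subset} --- apply part~(i) of the corollary to this non-right-continuous filtration, and then pass to its right-continuous modification $\mathbb H$. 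That last step is legitimate: a c\`adl\`ag local martingale for a filtration remains one for its right-continuous regularization (by L\'evy's downward theorem and right-continuity of paths), so the semimartingale property survives. Your route buys two things: it never uses the hypothesis that $\mathbb F$ is generated by a c\`adl\`ag process, so the theorem holds for an arbitrary right-continuous $\mathbb F$, and it makes explicit the gap between $\mathcal F_t\vee\sigma(\tau)$ and $\mathcal H_t$ that the paper glosses over. The cost is reliance on the regularization fact, which, while standard, is external to the paper's toolkit. Your integrability bound $E(\sup_t|N^n_t|)\leq 2E(\sup_t|M_t|)+K$ matches the paper's ``replace $K$ by $K+E(\sup_t|M_t|)$,'' and the inclusion $\mathbb G\subset\mathbb H$ plus Stricker is exactly the paper's closing step.
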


\begin{proof}
Since $\tau_n$ converges in probability to $\tau$ and $\mathbb F$ is the natural filtration of some c\`adl\`ag process, we can prove that $\mathcal H^n_t \stackrel{w}{\rightarrow} \mathcal H_t$ for each $t\in [0,T]$, using the same techniques as in Lemmas \ref{L:carac} and \ref{L:conv}. Up to replacing $K$ by $K+E(\sup_{0 \leq t\leq T}|M_t|)$, $M^n=M-A^n$ and $A^n$ satisfy the assumptions of Corollary~\ref{cor:semimg}. Therefore $M$ is an $\mathbb H$~semimartingale, and a $\mathbb G$~semimartingale by Stricker's theorem. 
\end{proof}

One case where the first assumption of Theorem \ref{T:ini} is satisfied is when $\tau_n$ satisfies Jacod's criterion, for each $n\geq 0$. In this case, and if $\mathbb F$ is the natural filtration of a Brownian motion $W$, the result above can be made more explicit. Assume for simplicity that the conditional distributions of $\tau_n$ are absolutely continuous w.r.t~Lebesgue measure,
$$
P(\tau_n\in du\mid\mathcal F_t)(\omega)=p^n_t(u,\omega)du
$$
where the conditional densities are chosen so that $(u,\omega, t)\rightarrow p^n_t(u,\omega)$ is c\`adl\`ag in $t$ and measurable for the optional $\sigma$-field associated with the filtration $\hat{\mathbb F}$ given by $\hat{\mathcal F}_t=\cap_{u>t}\mathcal B(\mathbb R)\otimes \mathcal F_u$. From the martingale representation theorem in a Brownian filtration, there exists for each $n$ a family $\{q^n(u), u>0\}$ of $\mathbb F$~predictable processes $(q^n_t(u))_{0\leq t\leq T}$ such that
\begin{equation}\label{eq:dens}
p^n_t(u)=p^n_0(u)+\int_0^tq^n_s(u)dW_s
\end{equation}

\begin{corollary}\label{cor:ini}
Assume there exists $K$ such that $E\Big(\int_0^T\Big|\frac{q^n_s(\tau_n)}{p^n_s(\tau_n)}\Big|ds\Big)\leq K$ for all $n$, then $W$ is a special semimartingale in both $\mathbb H$ and $\mathbb G$.
\end{corollary}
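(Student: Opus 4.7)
The plan is to reduce Corollary \ref{cor:ini} to Theorem \ref{T:ini} by producing, for every $n$, the explicit $\mathbb H^n$-canonical decomposition of $W$ and showing its drift has uniformly bounded $L^1$-variation.

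First I would verify the standing hypotheses of Theorem \ref{T:ini}. Since $W$ is a Brownian motion on the finite interval $[0,T]$, Doob's $L^2$ inequality gives $E(\sup_{0\le t\le T}|W_t|)\le 2E(|W_T|)<\infty$, so the requirement that $\sup_{0\le t\le T}|M_t|$ be integrable is met with $M=W$. It therefore suffices to exhibit, for each $n$, an $\mathbb H^n$-predictable finite variation process $A^n$ such that $W-A^n$ is an $\mathbb H^n$-martingale and such that $E(\int_0^T|dA^n_s|)\le K$ uniformly in $n$.

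This is where Jacod's theorem in the Brownian setting enters. Because $\tau_n$ satisfies Jacod's criterion with absolutely continuous $\mathbb F$-conditional densities $p^n_t(u,\omega)\,du$, and because the filtration $\mathbb F$ is generated by $W$, one invokes the classical computation (a martingale representation argument applied to the family $\{p^n_\cdot(u)\}_u$ using (\ref{eq:dens}), combined with Jacod's formula for the $\mathbb H^n$-decomposition of $\mathbb F$-local martingales) to obtain
\begin{equation*}
W_t=\tilde W^n_t+\int_0^t\frac{q^n_s(\tau_n)}{p^n_s(\tau_n)}\,ds,\qquad 0\le t\le T,
\end{equation*}
where $\tilde W^n$ is an $\mathbb H^n$-Brownian motion. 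Thus setting $A^n_t:=\int_0^t q^n_s(\tau_n)/p^n_s(\tau_n)\,ds$ gives a continuous, $\mathbb H^n$-adapted (hence $\mathbb H^n$-predictable) finite variation process whose total variation on $[0,T]$ equals $\int_0^T|q^n_s(\tau_n)/p^n_s(\tau_n)|\,ds$. The standing assumption of the corollary is precisely $E(\int_0^T|dA^n_s|)\le K$ for all $n$.

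With these two ingredients in place, Theorem \ref{T:ini} applies verbatim and yields that $W$ is both an $\mathbb H$-semimartingale and a $\mathbb G$-semimartingale; since $W$ is continuous, these semimartingales are automatically special. The only non-routine step is the first one, justifying the explicit formula for $A^n$: it requires recalling Jacod's decomposition theorem under the absolute continuity assumption and using the martingale representation (\ref{eq:dens}) to identify the drift. Once that formula is in hand, the rest of the argument is a direct verification of the hypotheses of Theorem \ref{T:ini}.
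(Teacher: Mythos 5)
Your proposal is correct and follows essentially the same route as the paper: both identify $A^n_t=\int_0^t q^n_s(\tau_n)/p^n_s(\tau_n)\,ds$ via Jacod's theorem combined with the martingale representation (\ref{eq:dens}), note that the corollary's hypothesis is exactly the uniform bound $E(\int_0^T|dA^n_s|)\leq K$, and then invoke Theorem \ref{T:ini}. Your explicit checks of the integrability of $\sup_{0\le t\le T}|W_t|$ and of the specialness of the resulting semimartingale are routine details the paper leaves implicit.
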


\begin{proof}
Since $\tau_n$ satisfy Jacod's criterion, it follows from Theorem 2.1 in \cite{Jacod:1987} that $W_t-A^n_t$ is an $\mathbb H$~local martingale, where $A^n_t=\int_0^t\frac{d\langle p^n(u), W\rangle_s}{p^n_s(u)}\Big|_{u=\tau_n}$. Now, it follows from (\ref{eq:dens}) that
$A^n_t=\int_0^t\frac{q^n_s(\tau_n)}{p^n_s(\tau_n)}ds$ and Theorem \ref{T:ini} allows us to conclude. 
\end{proof}

Assume the assumptions of Corollary \ref{cor:ini} are satisfied and let $W=M+A$ be the $\mathbb H$~canonical decomposition of $W$. Let $m$ be an $\mathbb F$~predictable process such that $\int_0^t m_s^2ds$ is locally integrable and let $M$ be the $\mathbb F$~local martingale $M_t=\int_0^tm_sdW_s$. Theorem VI.5 in \cite{Protter:2005} then guarantees that $M$ is an $\mathbb H$~semimartingale as soon as the process $(\int_0^tm_sdA_s)_{t\geq 0}$ exists as a path-by-path Lebesgue-Stieltjes integral a.s.

\begin{example}
In order to emphasize that some assumptions as in Theorem \ref{T:ini} are needed, we provide now a counter-example. Let $\mathbb F$ be the natural filtration of some Brownian motion~$B$ and choose $\tau$ to be some functional of the Brownian path i.e.~$\tau=f((B_s, 0\leq s\leq 1))$, such that $\sigma(\tau)=\mathcal F_1$. Then $B$ is not a semimartingale in $\mathbb H=(\mathcal F_t\vee\sigma(\tau))_{0\leq t\leq 1}$. Now, define $\tau_n=\tau+\frac{1}{\sqrt{n}}N$, where $N$ is a standard normal random variable independent from $\mathbb F$. Then $\tau_n$ converge a.s.~to $\tau$ and $P(\tau_n\leq u\mid\mathcal F_t)=\int_{-\infty}^uE(g_n(v-\tau)\mid\mathcal F_t)dv$ where $g_n$ is the probability density function of $\frac{1}{\sqrt{n}}N$, hence $P(\tau_n\in du\mid\mathcal F_t)(\omega)=E(g_n(u-\tau)\mid\mathcal F_t)(\omega)(du)$. Therefore, $\tau_n$ satisfies Jacod's criterion, for each $n$ and $p^n_t(u,\omega)=E(g_n(u-\tau)\mid\mathcal F_t)(\omega)$. Thus, $B$ is a semimartingale in $\mathbb H^n=(\mathcal F_t\vee\sigma(\tau_n)_{0\leq t\leq 1})$ and $\mathcal H^n_t\stackrel{w}{\rightarrow} \mathcal H_t$ for each $0\leq t\leq 1$.
\end{example}

\subsubsection{Progressive filtration expansion with a process}

Let $(N^n)_{n\geq 1}$ be a sequence of c\`adl\`ag processes converging in probability under the Skorohod $J_1$-topology to a c\`adl\`ag process $N$ and let $\mathbb{N}^n$ and $\mathbb{N}$ be their natural filtrations. Define the filtrations $\mathbb G^{0,n}=\mathbb{F}\vee\mathbb{N}^n$ and $\mathbb G^n$ by $\mathcal G_t^n=\bigcap_{u>t}\mathcal G^{0,n}_u$. Let also $\mathbb G^0$ (resp. $\mathbb G$) be the smallest (resp. the smallest right-continuous) filtration containing $\mathbb F$ and to which $N$ is adapted. The result below is the main theorem of this paper.

\begin{theorem}\label{T:exp}
Let $X$ be an $\mathbb{F}$~semimartingale such that for each $n$, $X$ is a $\mathbb{G}^n$~semimartingale with canonical decomposition $X = M^n + A^n$. Assume $E(\int_0^T|dA^n_s|) \leq K$ and $E(\sup_{0\leq s\leq T}|M^n_s|)\leq K$ for some $K$ and all $n$. Finally, assume one of the following holds.
\begin{itemize}
	\item[-] $N$ has no fixed times of discontinuity,
	
	\item[-] $N^n$ is a discretization of $N$ along some refining subdivision $(\pi_n)_{n\geq 1}$ such that each fixed time of discontinuity of $N$ belongs to $\cup_n \pi_n$.
\end{itemize}
Then 
\begin{itemize}
	\item[(i)] $X$ is a $\mathbb G^0$~special semimartingale. 

	\item[(ii)] Moreover, if $\mathbb F$ is the natural filtration of some c\`adl\`ag process then $X$ is a $\mathbb G$~special semimartingale with canonical decomposition $X=M+A$ such that $M$ is a $\mathbb{G}$ martingale and $\sup_{0\leq s\leq T} |M_s|$ and $\int_0^T|dA_s|$ are integrable.

	\item[(iii)] Furthermore, assume that $X$ is $\mathbb G$~quasi-left continuous and $\mathbb G^n \stackrel{w}{\rightarrow} \mathbb G$. Then $(M^n, A^n)$ converges in probability under the Skorohod $J_1$ topology to $(M,A)$.
\end{itemize}
\end{theorem}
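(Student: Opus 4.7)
The plan is to reduce all three parts to Corollary \ref{cor:semimg}, with the sequence of semimartingales taken to be the constant sequence $X^n \equiv X$, decomposed in the filtration $\mathbb{G}^n$. The nontrivial ingredient is the verification of pointwise weak convergence of the appropriate $\sigma$-fields, which I would split into three layers.

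First, I would establish that $\mathcal{N}^n_t \stackrel{w}{\to} \mathcal{N}_t$ for every $t \in [0,T]$. Under the first alternative ($N$ has no fixed times of discontinuity) this is immediate from Lemma~\ref{L:conv}; under the second (discretization along a refining subdivision containing the fixed times of discontinuity of $N$) this is Lemma~\ref{L:discr}. Then, observing that the constant sequence $(\mathcal{F}_t)_n$ trivially converges weakly to $\mathcal{F}_t$, Lemma~\ref{L:vee} yields $\mathcal{G}^{0,n}_t = \mathcal{F}_t \vee \mathcal{N}^n_t \stackrel{w}{\to} \mathcal{F}_t \vee \mathcal{N}_t = \mathcal{G}^{0}_t$. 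Since $\mathcal{G}^{0,n}_t \subset \mathcal{G}^n_t$, Lemma~\ref{L:subset} then gives $\mathcal{G}^n_t \stackrel{w}{\to} \mathcal{G}^{0}_t$ for every $t$. At this point $X$ is $\mathbb{F}$-adapted and hence $\mathbb{G}^{0}$-adapted, and the hypotheses on $(M^n, A^n)$ are exactly those of Corollary~\ref{cor:semimg}(i) with target filtration $\mathbb{G}^0$, which proves part~(i).

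For part~(ii), I need to upgrade the weak convergence from $\mathcal{G}^0_t$ to its right-continuous modification $\mathcal{G}_t$. Assuming $\mathbb{F}$ is the natural filtration of some c\`adl\`ag process $Y$, I would observe that $\mathbb{G}$ coincides with the (right-continuous) natural filtration of the joint c\`adl\`ag process $(Y,N)$, and similarly $\mathbb{G}^n$ coincides with the natural filtration of $(Y, N^n)$. In the first alternative, $N^n \stackrel{P}{\to} N$ together with the c\`adl\`ag property of $Y$ gives $(Y, N^n) \stackrel{P}{\to} (Y,N)$ in the Skorohod $J_1$ topology, so that Lemma~\ref{L:conv} applied to this joint process yields $\mathcal{G}^n_t \stackrel{w}{\to} \mathcal{G}_t$ at every $t$ that is a continuity point of $(Y,N)$ (with a small tweak to handle the fixed discontinuities of $Y$ through a combination of Lemmas~\ref{L:vee} and \ref{L:subset} together with step~1). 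In the second alternative one argues analogously via Lemma~\ref{L:discr}. Once this is in hand, Corollary~\ref{cor:semimg}(ii) applied with the right-continuous limit $\mathbb{G}$ gives the canonical decomposition $X = M + A$ with $M$ a true $\mathbb{G}$-martingale and $\sup_s |M_s|, \int_0^T |dA_s| \in L^1$.

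For part~(iii), both the $\mathbb{G}$-quasi-left continuity of $X$ and the weak convergence of filtrations $\mathbb{G}^n \stackrel{w}{\to} \mathbb{G}$ are assumed outright; the pointwise weak convergence $\mathcal{G}^n_t \stackrel{w}{\to} \mathcal{G}_t$ is already in hand from the previous step, and the integrability hypotheses on $(M^n, A^n)$ give the uniform integrability of $([X,X]_T^{1/2})_n$ and the tightness of $(V(A^n)_T)_n$ required by the underlying extended-convergence theorem. Consequently Corollary~\ref{cor:semimg}(iii) applies directly and yields $(M^n, A^n) \to (M,A)$ in probability under the Skorohod $J_1$ topology.

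The expected difficulty is concentrated in part~(ii): upgrading the weak convergence of $\sigma$-fields from $\mathcal{G}^0_t$ to the right-continuous modification $\mathcal{G}_t$. The naive application of Lemma~\ref{L:vee}/\ref{L:subset} only delivers the non-right-continuous limit, so one really must bring in the joint c\`adl\`ag structure of $(Y,N)$ and reapply Lemma~\ref{L:conv} or \ref{L:discr} at that level, carefully handling the interaction between the (possibly distinct) fixed times of discontinuity of $Y$ and $N$ so that the Skorohod $J_1$ convergence of $(Y,N^n)$ to $(Y,N)$ in probability can be established.
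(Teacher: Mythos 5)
Your proposal is correct and follows essentially the same route as the paper: establish $\mathcal N^n_t\stackrel{w}{\rightarrow}\mathcal N_t$ via Lemma~\ref{L:conv} or~\ref{L:discr}, pass to $\mathcal G^{0,n}_t\stackrel{w}{\rightarrow}\mathcal G^0_t$ and then $\mathcal G^n_t\stackrel{w}{\rightarrow}\mathcal G^0_t$ via Lemmas~\ref{L:vee} and~\ref{L:subset}, rerun the argument of Lemmas~\ref{L:conv}/\ref{L:discr} on the joint process to get $\mathcal G^n_t\stackrel{w}{\rightarrow}\mathcal G_t$ when $\mathbb F$ is a natural filtration, and conclude with Corollary~\ref{cor:semimg}. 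If anything, your discussion of the upgrade from $\mathcal G^0_t$ to $\mathcal G_t$ in part~(ii) is more explicit than the paper's one-line appeal to ``the same proofs as of Lemmas~\ref{L:conv} and~\ref{L:discr}.''
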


\begin{proof}
Under assumption (i), since $N^n\stackrel{P}{\rightarrow}N$ and $P(\Delta N_t \neq 0)=0$ for all $t$, it follows from Lemma \ref{L:conv} that $\mathcal N^n_t\stackrel{P}{\rightarrow}\mathcal N_t$ for all $t$. The same holds under assumption (ii) using Lemma \ref{L:discr}. Lemma \ref{L:vee} then ensures that $\mathcal G^{0,n}_t\stackrel{w}{\rightarrow}\mathcal G^0_t$ for all $t$. Since $\mathcal G^{0,n}_t\subset\bigcap_{u>t}\mathcal G^{0,n}_u=\mathcal G^n_t$, it follows from Lemma \ref{L:subset} that $\mathcal G^n_t\stackrel{w}{\rightarrow}\mathcal G^0_t$ for all $t$. Being an $\mathbb{F}$~semimartingale, $X$ is clearly $\mathbb G^0$~adapted. An application of Corollary \ref{cor:semimg} ends the proof of the first claim. When $\mathbb F$ is the natural filtration of some c\`adl\`ag process, the same proofs as of Lemmas \ref{L:conv} and \ref{L:discr} guarantee that $\mathcal G^n_t\stackrel{w}{\rightarrow}\mathcal G_t$ for all $t$. Since $\mathbb G$ is right-continuous, the second and third claims follow from Corollary \ref{cor:semimg}.
\end{proof}

We apply this result to expand the filtration $\mathbb F$ progressively with a point process. Let $(\tau_i)_{i\geq 1}$ and $(X_i)_{i\geq 1}$ be two sequences of random variables such that for each $n$, the random vector $(\tau_1, X_1, \ldots, \tau_n, X_n)$ satisfies Jacod's criterion w.r.t the filtration $\mathbb F$. Assume that for all $t$ and $i$, $P(\tau_i=t)=0$ and that one of the following holds:
\begin{itemize}
	\item[(i)] For all $i$, $X_i$ and $\tau_i$ are independent, $E|X_i|=\mu$ for some $\mu$ and $\sum_{i=1}^{\infty}P(\tau_i\leq T)<\infty$
	
	\item[(ii)] $E(|X_i^2|)=c$ and $\sum_{i=1}^{\infty}\sqrt{P(\tau_i\leq T)}< \infty$.
\end{itemize}
Let $N^n_t=\sum_{i=1}^nX_i1_{\{\tau_i\leq t\}}$ and $N_t=\sum_{i=1}^{\infty}X_i1_{\{\tau_i\leq t\}}$. The assumptions on $N^n$ and $N$ as of Theorem \ref{T:exp} are satisfied.
\begin{lemma}
Under the assumptions above, $N_t\in L^1$ for each $t$, $N^n\stackrel{P}{\rightarrow} N$ and $N$ has no fixed times of discontinuity. 
\end{lemma}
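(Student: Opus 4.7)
The plan is to handle integrability, convergence, and absence of fixed times of discontinuity in that order, using the same tail-summability estimate as the common engine throughout.

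First, I would establish the finite moment bound $\sum_{i=1}^{\infty} E|X_i \mathbf{1}_{\{\tau_i \leq T\}}| < \infty$. Under assumption (i), independence gives $E|X_i \mathbf{1}_{\{\tau_i \leq t\}}| = \mu\, P(\tau_i \leq t)$, and the summability hypothesis $\sum P(\tau_i \leq T) < \infty$ closes the estimate. Under assumption (ii), Cauchy--Schwarz yields $E|X_i \mathbf{1}_{\{\tau_i \leq t\}}| \leq \sqrt{c}\,\sqrt{P(\tau_i \leq t)}$, and summability follows from $\sum \sqrt{P(\tau_i \leq T)} < \infty$. In either case, monotone convergence gives $E\bigl(\sum_{i=1}^{\infty}|X_i|\mathbf{1}_{\{\tau_i \leq T\}}\bigr) < \infty$, so the series defining $N_t$ converges absolutely a.s.\ and $N_t \in L^1$ for every $t \in [0,T]$.

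Next, for the convergence $N^n \stackrel{P}{\to} N$ in the Skorohod $J_1$ topology, I would show the strictly stronger uniform convergence in $L^1$. Since
\[
\sup_{0 \leq t \leq T} |N_t - N^n_t| \;\leq\; \sum_{i=n+1}^{\infty} |X_i|\,\mathbf{1}_{\{\tau_i \leq T\}},
\]
the right-hand side is the tail of the convergent series from the previous step, so it tends to $0$ a.s.\ and (by dominated convergence, with dominating summable function $\sum_i |X_i|\mathbf{1}_{\{\tau_i\leq T\}}$) in $L^1$. Uniform convergence in probability on $[0,T]$ then implies convergence in probability in the $J_1$ topology by taking $\lambda = \mathrm{Id}$ in the definition of the Skorohod distance.

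Finally, for the absence of fixed times of discontinuity, I would observe that a jump of $N$ at time $t$ forces some $\tau_i$ to equal $t$. Hence
\[
\{\Delta N_t \neq 0\} \;\subset\; \bigcup_{i=1}^{\infty} \{\tau_i = t\},
\]
and countable subadditivity combined with the assumption $P(\tau_i = t) = 0$ for all $i$ and $t$ gives $P(\Delta N_t \neq 0) = 0$. No step here is an obstacle: the only mildly delicate point is making sure that in case (ii) one applies Cauchy--Schwarz rather than trying to use independence, which is not assumed there.
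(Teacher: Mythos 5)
Your proof is correct and follows essentially the same route as the paper: the same tail-summability bound $\sum_i E(|X_i|1_{\{\tau_i\leq T\}})<\infty$ (via independence in case (i), Cauchy--Schwarz in case (ii)) drives both the $L^1$ membership and the uniform convergence of $N^n$ to $N$, and the fixed-discontinuity claim is handled by the identical union bound. The only cosmetic difference is that you pass through a.s.\ and $L^1$ convergence of the tail while the paper applies Markov's inequality directly; both rest on the same estimate.
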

\begin{proof}
We prove the statement under assumption (i). For each~$t$,
$$
E(|N_t|)\leq \sum_{i=1}^{\infty}E(|X_i|1_{\{\tau_i\leq t\}})\leq \mu\sum_{i=1}^{\infty}P(\tau_i\leq t)<\infty
$$
Therefore, $N_t\in L^1$. For $\eta>0$ and $n$ integer, we obtain the following estimate.
\begin{align*}
P(\sup_{0\leq t\leq T}|N_t-N^n_t|\geq\eta)&=P(\sup_{0\leq t\leq T}\big|\sum_{i=n+1}^{\infty}X_i1_{\{\tau_i\leq t\}}\big|\geq \eta)\\
&\leq P(\sup_{0\leq t\leq T}\sum_{i=n+1}^{\infty}|X_i|1_{\{\tau_i\leq t\}}\geq \eta)=P(\sum_{i=n+1}^{\infty}|X_i|1_{\{\tau_i\leq T\}}\geq \eta)\\
&\leq\frac{1}{\eta}E(\sum_{i=n+1}^{\infty}|X_i|1_{\{\tau_i\leq T\}})=\frac{\mu}{\eta}\sum_{i=n+1}^{\infty}P(\tau_i\leq T)\rightarrow 0
\end{align*}
This implies $N^n\stackrel{P}{\rightarrow} N$. Under assumption (ii), the proof is also straightforward and based on Cauchy Schwarz inequalities. Finally, since
$$
P(|\Delta N_t|\neq 0)\leq P(\exists i \mid \tau_i=t)\leq \sum_{i=1}^{\infty}P(\tau_i=t)=0
$$
$N$ has no fixed times of discontinuity
\end{proof}

Since the random vector $(\tau_1, X_1, \ldots, \tau_n, X_n)$ is assumed to satisfy Jacod's criterion, it follows from \cite{KLP} that $\mathbb F$~semimartingales remain $\mathbb G^n$~semimartingales, for each $n$. Therefore, this property also holds between $\mathbb F$ and $\mathbb G$ for $\mathbb F$~semimartingales whose $\mathbb G^n$~canonical decompositions satisfy the regularity assumptions of Theorem~\ref{T:exp}. Here $\mathbb G$ is the smallest filtration containing $\mathbb F$ and to which $N$ is adapted.

We would like to take a step further and reverse the previous situation. That is instead of starting with a sequence of processes $N^n$ converging to some process $N$, and putting assumptions on the semimartingale properties of $\mathbb F$~semimartingales w.r.t the intermediate filtrations~$\mathbb G^n$ and their decompositions therein, we would like to expand the filtration $\mathbb F$ with a given process $X$ and express all the assumptions in terms of $X$ and the $\mathbb F$~semimartingales considered. We are able to do this for c\`adl\`ag processes which satisfy a criterion that can loosely be seen as a localized extension of Jacod's criterion to processes. The integrability assumptions of Theorem \ref{T:exp} are expressed in terms of $\mathcal F_t$-conditional densities. Before doing this, we conclude this section by studying the stability of hypothesis $(H)$ with respect to the weak convergence of the $\sigma$-fields in a filtration expansion setting.

\subsection{The case of hypothesis $(H)$}
Recall that given two nested filtrations $\mathbb F\subset\mathbb G$, we say that hypothesis $(H)$ holds between $\mathbb F$ and $\mathbb G$ if any square integrable $\mathbb F$ martingale remains an $\mathbb G$ martingale. Br\'emaud and Yor proved the next Lemma~\ref{L:H} (see \cite{BremaudYor}).
\begin{lemma}\label{L:H}
Let $\mathbb F\subset\mathbb G$ two nested filtrations. The following assertions are equivalent.
\begin{itemize}
\item[(i)] Hypothesis $(H)$ holds between $\mathbb F$ and $\mathbb G$.
 
\item[(ii)] For each $0\leq t\leq T$, $\mathcal F_T$ and $\mathcal G_t$ are conditionally independent given $\mathcal F_t$.

\item[(iii)] For each $0\leq t\leq T$, each $F\in L^2(\mathcal F_T)$ and each $G_t\in L^2(\mathcal G_t)$, 
$$
E(FG_t\mid\mathcal F_t)=E(F\mid\mathcal F_t)E(G_t\mid\mathcal F_t).
$$
\end{itemize}
\end{lemma}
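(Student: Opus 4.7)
The plan is to establish the equivalences cyclically, going (i) $\Rightarrow$ (iii) $\Rightarrow$ (ii) $\Rightarrow$ (i), with the observation that (ii) and (iii) are essentially two ways of formulating the same conditional independence statement. The single key analytic fact I will use throughout is the standard reformulation of hypothesis $(H)$: it holds between $\mathbb F$ and $\mathbb G$ if and only if $E(F\mid\mathcal G_t)=E(F\mid\mathcal F_t)$ for every $t\in[0,T]$ and every $F\in L^2(\mathcal F_T)$. This reformulation is immediate in one direction: if $F\in L^2(\mathcal F_T)$ and we set $N_s=E(F\mid\mathcal F_s)$, then $N$ is a square integrable $\mathbb F$~martingale, hence a $\mathbb G$~martingale by $(H)$, so $E(F\mid\mathcal G_t)=E(N_T\mid\mathcal G_t)=N_t=E(F\mid\mathcal F_t)$. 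Conversely, the density of closed $\mathbb F$~martingales inside square integrable ones gives the reverse implication after localisation.

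For $(i)\Rightarrow(iii)$, I will start from the reformulation above. Given $F\in L^2(\mathcal F_T)$ and $G_t\in L^2(\mathcal G_t)$, I will condition first on $\mathcal G_t$ and then on $\mathcal F_t$: since $G_t$ is $\mathcal G_t$~measurable and $E(F\mid\mathcal G_t)=E(F\mid\mathcal F_t)$ is $\mathcal F_t$~measurable, the tower property yields
\[
E(FG_t\mid\mathcal F_t)=E\bigl(G_t E(F\mid\mathcal G_t)\mid\mathcal F_t\bigr)=E\bigl(G_t E(F\mid\mathcal F_t)\mid\mathcal F_t\bigr)=E(F\mid\mathcal F_t)E(G_t\mid\mathcal F_t),
\]
which is exactly $(iii)$.

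For $(iii)\Leftrightarrow(ii)$, I will invoke the standard characterization of conditional independence: two sub-$\sigma$-fields $\mathcal H_1,\mathcal H_2$ are conditionally independent given $\mathcal H_0$ if and only if $E(F_1F_2\mid\mathcal H_0)=E(F_1\mid\mathcal H_0)E(F_2\mid\mathcal H_0)$ for every bounded (equivalently, square integrable) $\mathcal H_1$~measurable $F_1$ and $\mathcal H_2$~measurable $F_2$. Applied with $\mathcal H_0=\mathcal F_t$, $\mathcal H_1=\mathcal F_T$, $\mathcal H_2=\mathcal G_t$, this is precisely the content of $(iii)$.

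Finally, for $(iii)\Rightarrow(i)$, let $N$ be a square integrable $\mathbb F$~martingale and fix $s\leq t\leq T$. I need to verify $E(N_t\mid\mathcal G_s)=N_s$, or equivalently $E((N_t-N_s)G_s)=0$ for every $G_s\in L^2(\mathcal G_s)$. Applying $(iii)$ at time $s$ with $F:=N_t-N_s\in L^2(\mathcal F_t)\subset L^2(\mathcal F_T)$ gives
\[
E((N_t-N_s)G_s\mid\mathcal F_s)=E(N_t-N_s\mid\mathcal F_s)E(G_s\mid\mathcal F_s)=0,
\]
since $N$ is an $\mathbb F$~martingale; taking expectations closes the argument. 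I do not anticipate any serious obstacle: the proof is essentially a careful use of the tower property together with the martingale reformulation of $(H)$, and the only mildly delicate point is being explicit about the density/approximation step underlying the equivalence between the martingale formulation and the statement about conditional expectations of $\mathcal F_T$~measurable random variables.
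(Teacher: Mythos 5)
Your proof is correct, and since the paper gives no proof of this lemma (it simply cites Br\'emaud and Yor), there is nothing to diverge from: your argument via the reformulation $E(F\mid\mathcal G_t)=E(F\mid\mathcal F_t)$ for $F\in L^2(\mathcal F_T)$ is exactly the classical one. The only cosmetic remark is that on the finite horizon $[0,T]$ every square integrable $\mathbb F$~martingale is already closed by its terminal value, so the ``density/localisation'' step you flag as delicate is not actually needed.
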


Let $\mathbb F\subset\mathbb G$ be two nested right-continuous filtrations and $\mathbb G^n$ be a sequence of right-continuous filtrations containing $\mathbb F$ and such that $\mathcal G^n_t$ converges weakly to $\mathcal G_t$ for each $t$. We mentioned that an $\mathbb F$~local martingale that remains a $\mathbb G^n$~semimartingale for each $n$ might still lose its semimartingale property in $\mathbb G$ and we provided conditions that prevent this pathological behavior. In this subsection, we prove that this cannot happen in case hypothesis $(H)$ holds between $\mathbb F$ and each $\mathbb G^n$. One obtains even that hypothesis $(H)$ holds between $\mathbb F$ and $\mathbb G$.

\begin{theorem}
Let $\mathbb F$, $\mathbb G$ and $(\mathbb G^n)_{n\geq 1}$ right-continuous filtrations such that $\mathbb F\subset\mathbb G$, $\mathbb F\subset\mathbb G^n$ for each $n$ and $\mathcal G^n_t\stackrel{w}{\rightarrow}\mathcal G_t$ for each $t$. Assume that for each $n$, hypothesis $(H)$ holds between $\mathbb F$ and $\mathbb G^n$. Then hypothesis $(H)$ holds between $\mathbb F$ and $\mathbb G$.
\end{theorem}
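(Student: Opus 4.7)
By Lemma \ref{L:H}, it suffices to verify characterization (iii): for every $t\in[0,T]$, every bounded $\mathcal F_T$-measurable $F$ and every bounded $\mathcal G_t$-measurable $G$,
\[
E(FG\mid\mathcal F_t)=E(F\mid\mathcal F_t)\,E(G\mid\mathcal F_t),
\]
the bounded case being enough to conclude the $L^2$ version by a standard truncation and density argument.

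The plan is to transport $G$ down to a $\mathcal G^n_t$-measurable random variable, apply hypothesis $(H)$ at level $n$, and then let $n\to\infty$. Fix $t$, $F$ and $G$ as above, and set
\[
G^n := E(G\mid\mathcal G^n_t).
\]
Then $G^n$ is $\mathcal G^n_t$-measurable and bounded by $\|G\|_\infty$. Since $G$ is integrable and $\mathcal G^n_t\stackrel{w}{\rightarrow}\mathcal G_t$, Lemma \ref{L:sigma} gives $G^n\stackrel{P}{\rightarrow}G$, and combined with the uniform bound, $G^n\to G$ in $L^p$ for every $p<\infty$ by dominated convergence.

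Because hypothesis $(H)$ holds between $\mathbb F$ and $\mathbb G^n$ and $G^n\in L^2(\mathcal G^n_t)$, Lemma \ref{L:H}(iii) applied at level $n$ yields
\[
E(FG^n\mid\mathcal F_t)=E(F\mid\mathcal F_t)\,E(G^n\mid\mathcal F_t).
\]
Using $\mathcal F_t\subset\mathcal G^n_t$ and the tower property,
\[
E(G^n\mid\mathcal F_t)=E\bigl(E(G\mid\mathcal G^n_t)\bigm|\mathcal F_t\bigr)=E(G\mid\mathcal F_t),
\]
so the right-hand side is independent of $n$ and equals $E(F\mid\mathcal F_t)E(G\mid\mathcal F_t)$. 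On the left-hand side, $|F(G^n-G)|\le\|F\|_\infty|G^n-G|\to 0$ in $L^1$, so conditional expectations converge in $L^1$, and passing to the limit we obtain
\[
E(FG\mid\mathcal F_t)=E(F\mid\mathcal F_t)\,E(G\mid\mathcal F_t).
\]
By Lemma \ref{L:H}, hypothesis $(H)$ holds between $\mathbb F$ and $\mathbb G$.

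No real obstacle arises beyond checking that the approximation $G^n\to G$ has the right integrability, which is immediate from the boundedness of $G$; the crucial ingredient is that the conditional expectations $E(G^n\mid\mathcal F_t)$ collapse to $E(G\mid\mathcal F_t)$ by the tower property, which is why the $n\to\infty$ limit on the right-hand side is trivial and only the left-hand side requires a convergence argument.
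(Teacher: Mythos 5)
Your proposal is correct and follows essentially the same route as the paper: approximate $G$ by $G^n=E(G\mid\mathcal G^n_t)$, apply Lemma \ref{L:H}(iii) at level $n$, collapse $E(G^n\mid\mathcal F_t)$ to $E(G\mid\mathcal F_t)$ by the tower property, and pass to the limit on the left-hand side using weak convergence of the $\sigma$-fields plus dominated convergence, finishing with a truncation to reach the $L^2$ case. The only cosmetic difference is that you also truncate $F$, whereas the paper keeps $F\in L^2(\mathcal F_T)$ throughout; both are fine.
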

\begin{proof}
We use Lemma \ref{L:H} and start with the bounded case. Let $0\leq t\leq T$, $F\in L^2(\mathcal F_T)$ and $G_t\in L^{\infty}(\mathcal G_t)$. For each $n$, define $G^n_t=E(G_t\mid\mathcal G^n_t)$. Then $G^n_t\in L^{\infty}(\mathcal G^n_t)$. Since hypothesis $(H)$ holds between $\mathbb F$ and $\mathbb G^n$, Lemma \ref{L:H} guarantees that $E(FG^n_t\mid\mathcal F_t)=E(F\mid\mathcal F_t)E(G^n_t\mid\mathcal F_t)$. But $\mathcal F_t\subset\mathcal G^n_t$, hence $E(G^n_t\mid\mathcal F_t)=E(E(G_t\mid\mathcal G^n_t)\mid\mathcal F_t)=E(G_t\mid\mathcal F_t)$. Since $\mathcal G^n_t\stackrel{w}{\rightarrow}\mathcal G_t$, $FG^n_t\stackrel{P}{\rightarrow}FG_t$. Now $FG^n_t$ is bounded by a square integrable process (by assumption) so the convergence holds in $L^1$ by the Dominated Convergence theorem so that $E(FG^n_t\mid\mathcal F_t)\stackrel{P}{\rightarrow}E(FG_t\mid\mathcal F_t)$. This proves that $E(FG_t\mid\mathcal F_t)=E(F\mid\mathcal F_t)E(G_t\mid\mathcal F_t)$. The general case where $G_t\in L^2(\mathcal G_t)$ follows by applying the bounded case result to the bounded random variables $G^{(m)}_t=G_t\wedge m$. Then for each $m$,
$$
E(FG^{(m)}_t\mid\mathcal F_t)\stackrel{P}{\rightarrow}E(FG^{(m)}_t\mid\mathcal F_t)
$$
and the Monotone Convergence theorem allows us to conclude.
\end{proof}

\section{Filtration expansion with a c\`adl\`ag process satisfying a \textit{generalized Jacod's criterion} and applications to diffusions}

In this section, we assume a c\`adl\`ag process $X$ and a right-continuous filtration $\mathbb F$ are given. We study the case where the process $X$ and the filtration $\mathbb F$ satisfy the following assumption.

\begin{assumption}[Generalized Jacod's criterion]\label{A:J}
There exists a sequence $(\pi_n)_{n\geq 1}=(\{t^n_i\})_{n\geq 1}$ of subdivisions of $[0,T]$ whose mesh tends to zero and such that for each $n$, $(X_{t^n_0}, X_{t^n_1}-X_{t^n_0}, \ldots, X_{T}-X_{t^n_n})$ satisfies Jacod's criterion, i.e.~there exists a $\sigma$-finite measure $\eta_n$ on $\mathcal{B}(\mathbb{R}^{n+2})$ such that $P\big((X_{t^n_0}, X_{t^n_1}-X_{t^n_0}, \ldots, X_{T}-X_{t^n_n})\in \cdot\mid\mathcal F_t\big)(\omega) \ll \eta_n(\cdot)$~a.s.
\end{assumption}
Under Assumption \ref{A:J}, the $\mathcal F_t$-conditional density
$$
p^{(n)}_t(u_0,\ldots,u_{n+1},\omega)=\frac{P\big((X_{t^n_0}, X_{t^n_1}-X_{t^n_0}, \ldots, X_T-X_{t^n_n})\in (du_0,\ldots,du_{n+1})\mid\mathcal F_t\big)(\omega)}{\eta_n(du_0,\ldots,du_{n+1})}
$$
exists for each $n$, and can be chosen so that $(u_0,\ldots,u_{n+1}, \omega, t)\rightarrow p^{(n)}_t(u_0,\ldots,u_{n+1},\omega)$ is c\`adl\`ag in $t$ and measurable for the optional $\sigma$-field associated with the filtration $\hat{\mathbb{F}}_t$ given by $\hat{\mathcal{F}}_t=\cap_{u>t}\mathcal{B}(\mathbb{R}^{n+2})\otimes \mathcal{F}_u$.
For each $0\leq i\leq n$, define 
$$
p^{i,n}_t(u_0,\ldots,u_i)=\int_{\mathbb{R}^{n+1-i}}p^{(n)}_t(u_0,\ldots,u_{n+1})\eta_n(du_{i+1},\ldots,du_{n+1})
$$
Let $M$ be a continuous $\mathbb F$~local martingale. Define
\begin{equation}\label{eq:Ain}
A^{i,n}_t=\int_{0}^t\frac{d\langle p^{i,n}(u_0,\ldots,u_i),M\rangle_s}{p^{i,n}_{s^{-}}(u_0,\ldots,u_i)}\Big|_{\forall 0\leq k\leq i, u_k=X_{t^n_k}-X_{t^n_{k-1}}}
\end{equation}
Finally define
$$
A^{(n)}_t=\sum_{i=0}^n\int_{t\wedge t^n_i}^{t\wedge t^n_{i+1}}dA^{i,n}_s
$$
i.e.
\begin{equation}\label{eq:An}
A^{(n)}_t=\sum_{i=0}^n1_{\{t^n_i\leq t<t^n_{i+1}\}}\big(\sum_{k=0}^{i-1}\int_{t^n_k}^{t^n_{k+1}}dA^{k,n}_s+\int_{t^n_i}^tdA^{i,n}_s\big)
\end{equation}
Of course, on each time interval $\{t^n_i\leq t<t^n_{i+1}\}$, only one term appears in the outer sum. Let $\mathbb G^0$ (resp. $\mathbb G$) be the smallest (resp. the smallest right-continuous) filtration containing $\mathbb F$ and relative to which $X$ is adapted. The theorem below is the main result of this section.

\begin{theorem}\label{T:semimgX}
Assume $X$ and $\mathbb F$ satisfy Assumption \ref{A:J} and that one of the following holds.
\begin{itemize}
	\item[-] $X$ has no fixed times of discontinuity,
	
	\item[-] the sequence of subdivisions $(\pi_n)_{n\geq 1}$ in Assumption \ref{A:J} is refining and each fixed time of discontinuity of $X$ belongs to $\cup_n \pi_n$.
\end{itemize}
Let $M$ be a continuous $\mathbb F$~martingale such that $E(\sup_{s\leq T}|M_s|)\leq K$ and $E(\int_0^T|dA^{(n)}_s|)\leq K$ for some $K$ and all $n$, with $A^n$ as in (\ref{eq:An}). Then
\begin{itemize}
	\item[(i)] $M$ is a $\mathbb G^0$~special semimartingale. 

	\item[(ii)] Moreover, if $\mathbb F$ is the natural filtration of some c\`adl\`ag process $Z$, then $M$ is a $\mathbb G$~special semimartingale with canonical decomposition $M=N+A$ such that $N$ is a $\mathbb G$ martingale and $\sup_{0\leq s\leq T} |N_s|$ and $\int_0^T|dA_s|$ are integrable.
\end{itemize}
\end{theorem}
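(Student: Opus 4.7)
The plan is to reduce the statement to Theorem~\ref{T:exp}. Set $N^n_t := X_{t^n_k}$ for $t^n_k \le t < t^n_{k+1}$, so that $N^n$ is the discretization of $X$ along $\pi_n$, and let $\mathbb G^n$ denote the smallest right-continuous filtration containing $\mathbb F$ to which $N^n$ is adapted. Under either of the two alternative hypotheses, $N^n$ converges in probability to $X$ under the Skorohod $J_1$ topology, and those alternatives correspond exactly to the two alternatives appearing in Theorem~\ref{T:exp} applied with $N := X$. With this identification, the filtrations $\mathbb G^0$ and $\mathbb G$ of the present statement coincide with those of Theorem~\ref{T:exp}. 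It therefore suffices to verify that $M$ is a $\mathbb G^n$~semimartingale with canonical decomposition $M = M^n + A^{(n)}$ satisfying the integrability requirements of Theorem~\ref{T:exp}.

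The integrability requirements are easy once the decomposition is established: the bound on $E(\int_0^T |dA^{(n)}_s|)$ is assumed, while from $M^n = M - A^{(n)}$ together with $E(\sup_{s \le T} |M_s|) \le K$ one obtains
$$
E\big(\sup_{s \le T} |M^n_s|\big) \le E\big(\sup_{s \le T} |M_s|\big) + E\Big(\int_0^T |dA^{(n)}_s|\Big) \le 2K,
$$
so that enlarging $K$ by a factor of two brings us into the framework of Theorem~\ref{T:exp}, whose conclusions~(i) and~(ii) then match those of the present theorem verbatim.

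The main obstacle is identifying $A^{(n)}$ as the $\mathbb G^n$~canonical drift of $M$. Under Assumption~\ref{A:J}, the full increment vector $Z^n := (X_{t^n_0}, X_{t^n_1}-X_{t^n_0}, \ldots, X_T-X_{t^n_n})$ satisfies Jacod's criterion with respect to $\mathbb F$; hence by the classical Jacod theorem~\cite{Jacod:1987}, $M$ remains a semimartingale in the initial expansion $\mathbb H^n$ of $\mathbb F$ by $Z^n$. Since $N^n$ is $\mathbb H^n$~adapted, $\mathbb G^n \subset \mathbb H^n$, and Stricker's theorem gives that $M$ is a $\mathbb G^n$~semimartingale. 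To pin down the explicit drift, I would work on each deterministic subinterval $[t^n_i, t^n_{i+1})$, on which $\mathbb G^n_t$ coincides with the initial expansion of $\mathbb F$ by only the already-observed sub-vector $Y^{i,n} := (X_{t^n_0}, X_{t^n_1}-X_{t^n_0}, \ldots, X_{t^n_i}-X_{t^n_{i-1}})$. The $\mathcal F_t$~conditional density of $Y^{i,n}$ relative to the corresponding marginal of $\eta_n$ is precisely $p^{i,n}_t(u_0,\ldots,u_i)$, so a second application of Jacod's theorem identifies the $\mathbb G^n$~drift of $M$ on this interval as $dA^{i,n}_s$ from~(\ref{eq:Ain}). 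Concatenating over $i$ produces the formula~(\ref{eq:An}) for $A^{(n)}$, and its $\mathbb G^n$~predictability follows from the c\`adl\`ag regularity imposed on the densities $p^{i,n}$ together with the $\mathcal G^n_{t^n_i}$~measurability of $Y^{i,n}$.
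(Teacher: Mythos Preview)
Your proof is correct and follows essentially the same route as the paper: discretize $X$ along $\pi_n$, identify the $\mathbb G^n$~drift of $M$ as $A^{(n)}$ via Jacod's theorem applied to the partial increment vectors, and then invoke Theorem~\ref{T:exp}. The only difference is that where you argue directly that $\mathbb G^n$ coincides on $[t^n_i,t^n_{i+1})$ with the initial expansion $\mathbb H^{i,n}$ and concatenate, the paper instead rewrites $X^n$ as the marked point process $\sum_{i}(X_{t^n_i}-X_{t^n_{i-1}})1_{\{t^n_i\le t\}}$ and cites Theorem~8 of~\cite{KLP} to obtain the $\mathbb G^n$~decomposition in one stroke.
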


\begin{proof}
We construct the discretized process $X^n$ defined by $X^n_t=X_{t^n_k}$ for all $t^n_k\leq t<t^n_{k+1}$. That is 
$$
X^n_t=\sum_{i=0}^nX_{t^n_i}1_{\{t^n_i\leq t<t^n_{i+1}\}} + X_T1_{\{t=T\}}
$$
with the convention $t^n_0=0$ and $t^n_{n+1}=T$. Let $\mathbb G^n$ be the smallest right-continuous filtration containing $\mathbb F$ and to which $X^n$ is adapted.

Now, for $0\leq t\leq T$,
\begin{align*}
X^n_t&=\sum_{i=0}^nX_{t^n_i}1_{\{t^n_i\leq t<t^n_{i+1}\}}+X_T1_{\{t=T\}}=\sum_{i=0}^nX_{t^n_i}1_{\{t^n_i\leq t\}}-\sum_{i=0}^nX_{t^n_i}1_{\{t^n_{i+1}\leq t\}}+X_T1_{\{t=T\}}\\
&=\sum_{i=1}^n(X_{t^n_i}-X_{t^n_{i-1}})1_{\{t^n_i\leq t\}}+X_{0}1_{\{t^n_0\leq t\}}-X_{t^n_{n}}1_{\{t^n_{n+1}\leq t\}}+X_T1_{\{t^n_{n+1}\leq t\}}\\
&=X_{0}1_{\{t^n_0\leq t\}}+\sum_{i=1}^{n+1}(X_{t^n_i}-X_{t^n_{i-1}})1_{\{t^n_i\leq t\}}=\sum_{i=0}^{n+1}(X_{t^n_i}-X_{t^n_{i-1}})1_{\{t^n_i\leq t\}}
\end{align*}
with the notation $X_{t^n_{-1}}=0$.

For each $0\leq i\leq n+1$, let $\mathbb H^{i,n}$ be the initial expansion of $\mathbb F$ with $(X_{t^n_k}-X_{t^n_{k-1}})_{0\leq k\leq i}$. Since $(X_{t^n_k}-X_{t^n_{k-1}})_{0\leq k\leq i}$ satisfies Jacod's criterion, it follows that for each $0\leq i\leq n+1$, $M-A^{i,n}$ is an $\mathbb H^{i,n}$~local martingale. Let 
$$
\tilde{\mathcal G}^n_t=\bigcap_{u>t}\mathcal{F}_u\vee\sigma\big((X_{t^n_i}-X_{t^n_{i-1}})1_{\{t^n_i\leq u\}}, i=0,\ldots,n+1\big)
$$
Since the times $t^n_k$ are fixed, $\mathbb H^{i,n}$ is also the initial expansion of $\mathbb F$ with $(t^n_k, X_{t^n_k}-X_{t^n_{k-1}})_{0\leq k\leq i}$ and $\tilde{\mathbb G}^n=\mathbb G^n$ using a Monotone Class argument and the fact that $X^n_{t^n_k}=X_{t^n_k}$, for all $0\leq k\leq n+1$. So it follows from Theorem 8 in \cite{KLP} that $M-A^{(n)}$ is a $\mathbb G^n$~local martingale. An application of Theorem \ref{T:exp} yields the result.
\end{proof}

We refrain from stating Theorem \ref{T:semimgX} in a more general form for clarity but provide two extensions in the remarks below.
\begin{itemize}

	\item[(i)] Going beyond the continuous case for the $\mathbb F$ local martingale $M$  is straightforward. We only need to use Theorem 8 in \cite{KLP} in its general version rather than its application to the continuous case. However the explicit form of $A^{(n)}$ is much more complicated, which makes it hard to check the integrability assumption of Theorem~\ref{T:semimgX}. To be more concrete, one has to replace $A^{(n)}$ in the theorem above by $\tilde{A}^{(n)}$ defined by
$$
\tilde{A}^{(n)}_t=\sum_{i=0}^n\int_{t\wedge t^n_i}^{t\wedge t^n_{i+1}}(d\tilde{A}^{i,n}_s+dJ^{i,n}_s)
$$
i.e.
$$
\tilde{A}^{(n)}_t=\sum_{i=0}^n1_{\{t^n_i\leq t<t^n_{i+1}\}}\Big(\sum_{k=0}^{i-1}\int_{t^n_k}^{t^n_{k+1}}(d\tilde{A}^{k,n}_s+dJ^{k,n}_s)+\int_{t^n_i}^t(d\tilde{A}^{i,n}_s+dJ^{i,n}_s)\Big)
$$
where $\tilde{A}^{i,n}$ is the compensator of $M$ in $\mathbb{H}^{i,n}$ as given by Jacod's theorem (see Theorems VI.10 and VI.11 in \cite{Protter:2005}) and $J^{i,n}$ is the dual predictable projection of $\Delta M_{t^n_{i+1}}1_{[t^n_{i+1},\infty[}$ onto~$\mathbb H^{i,n}$.

	\item[(ii)] A careful study of the proof above shows that Assumption \ref{A:J} is only used to ensure that there exists an $\mathbb H^{i,n}$ predictable process $A^{i,n}$ such that $M-A^{i,n}$ is an $\mathbb H^{i,n}$~local martingale. Therefore, Theorem \ref{T:semimgX} will hold whenever this weaker assumption is satisfied.
\end{itemize}

If the sequence of filtrations $\mathbb G^n$ converges weakly to $\mathbb G$ then $(M-A^{(n)}, A^{(n)})$ converges in probability under the Skorohod $J_1$ topology to $(N, A)$. Many criteria for this to hold are provided in the literature, see for instance Propositions $3$ and $4$ in \cite{Coquet0}. This holds for example when every $\mathbb G$~martingale is continuous and the subdivision $(\pi_n)_{n\geq 1}$ is refining. In this case, for each $0\leq t\leq T$, $(\mathcal G^n_t)_{n\geq 1}$ is increasing and converges weakly to the $\sigma$-field~$\mathcal G_t$. The following lemma allows us to conclude. See \cite{Coquet0} for a proof. 

\begin{lemma}\label{L:conv2}
Assume that every $\mathbb G$~martingale is continuous and that for every $0\leq t\leq T$, $(\mathcal G^n_t)_{n\geq 1}$ increases (or decreases) and converges weakly to $\mathcal G_t$. Then $\mathbb G^n\stackrel{w}{\rightarrow}\mathbb G$.
\end{lemma}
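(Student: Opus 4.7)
The plan is to verify Definition~\ref{D:filtration}: for every $Z\in L^1(\mathcal G_T)$ the c\`adl\`ag martingale $M^n:=E(Z\mid\mathcal G^n_\cdot)$ must converge in probability in the Skorohod $J_1$ topology to $M:=E(Z\mid\mathcal G_\cdot)$. Since every $\mathbb G$~martingale is continuous by hypothesis, $M$ has continuous paths, and $J_1$ convergence to a continuous limit is equivalent to uniform convergence on $[0,T]$. So the real task is to show
\[
\sup_{0\leq t\leq T}|M^n_t-M_t|\stackrel{P}{\rightarrow}0.
\]
A truncation of $Z$ at level $k$, combined with Doob's weak $L^1$ maximal inequality applied to $E(Z-Z_k\mid\mathcal G^n_\cdot)$ and $E(Z-Z_k\mid\mathcal G_\cdot)$, reduces the problem uniformly in $n$ to the case of bounded $Z$, so we may henceforth assume $Z\in L^2$.

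Next I establish pointwise $L^2$ convergence $M^n_t\rightarrow M_t$ at each fixed $t$. Treat the increasing case; the decreasing case is symmetric via reverse martingales on $\bigcap_n\mathcal G^n_t$. The sequence $(M^n_t)_n$ is a discrete-parameter martingale with respect to $(\mathcal G^n_t)_n$ and converges a.s. and in $L^2$ to $E(Z\mid\mathcal G^\infty_t)$, where $\mathcal G^\infty_t=\bigvee_n\mathcal G^n_t$. Applying Lemma~\ref{L:subset} with $\mathcal A^n=\mathcal G^n_t$ and constant $\mathcal B^n=\mathcal G^\infty_t$ gives $\mathcal G^\infty_t\stackrel{w}{\rightarrow}\mathcal G_t$, which by Definition~\ref{D:sigmafield} forces $\mathcal G_t\subseteq\mathcal G^\infty_t$ up to null sets; under the identification $\mathcal G^\infty_t=\mathcal G_t$ (which holds in the natural filtration-expansion situations where the lemma is used), the $L^2$ limit is indeed $E(Z\mid\mathcal G_t)=M_t$.

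The main obstacle is upgrading pointwise $L^2$ convergence to uniform convergence on $[0,T]$; this is exactly where the continuity of $M$ is crucial. Fix $\epsilon>0$ and, exploiting that paths of $M$ are uniformly continuous on $[0,T]$ a.s., pick a deterministic partition $0=s_0<s_1<\cdots<s_k=T$ fine enough that
\[
P\Big(\max_i\sup_{s_i\leq t\leq s_{i+1}}|M_t-M_{s_i}|>\epsilon\Big)<\epsilon \qquad\text{and}\qquad \max_iE[(M_{s_{i+1}}-M_{s_i})^2]<\epsilon^2.
\]
On each subinterval $[s_i,s_{i+1}]$, Doob's $L^2$ maximal inequality applied to the $\mathbb G^n$~martingale $s\mapsto M^n_s-M^n_{s_i}$ gives
\[
E\Big[\sup_{s_i\leq s\leq s_{i+1}}(M^n_s-M^n_{s_i})^2\Big]\leq 4E[(M^n_{s_{i+1}}-M^n_{s_i})^2],
\]
and pointwise $L^2$ convergence forces the right-hand side to tend to $4E[(M_{s_{i+1}}-M_{s_i})^2]<4\epsilon^2$ as $n\rightarrow\infty$. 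Combining the triangle inequality
\[
|M^n_t-M_t|\leq|M^n_t-M^n_{s_i}|+|M^n_{s_i}-M_{s_i}|+|M_t-M_{s_i}| \qquad\text{for } s_i\leq t\leq s_{i+1},
\]
with pointwise convergence at the finitely many grid points, the Doob bound above, the path-oscillation control on $M$, and Markov's inequality, yields the desired uniform convergence in probability. Together with the initial truncation this completes the proof.
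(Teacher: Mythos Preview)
The paper does not prove this lemma; it simply refers to \cite{Coquet0}. Your overall route---truncate $Z$ to reduce to $L^2$, obtain pointwise $L^2$ convergence $M^n_t\to M_t$ from monotone (resp.\ reverse) martingale convergence, then upgrade to uniform convergence via a fine deterministic partition combined with Doob's $L^2$ maximal inequality on each subinterval---is precisely the standard argument used there, and the details you give for the upgrading step are correct.

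The one genuine issue is the pointwise step, and you have put your finger on it yourself. Weak convergence $\mathcal G^n_t\stackrel{w}{\rightarrow}\mathcal G_t$ only controls $E(Y\mid\mathcal G^n_t)$ for $Y$ that is $\mathcal G_t$-measurable, not for general $Y\in L^1(\mathcal G_T)$; monotone martingale convergence therefore sends $M^n_t$ to $E(Z\mid\mathcal G^\infty_t)$, and one needs $\mathcal G^\infty_t=\mathcal G_t$ to identify this with $M_t$. Your parenthetical ``identification'' is not a technicality but a genuine extra hypothesis: the lemma as printed is actually false without it. For a counterexample, let $\mathbb G$ be the filtration of a Brownian motion $(B_t)_{0\le t\le 1}$ and set $\mathcal G^n_t:=\mathcal G_t\vee\sigma(B_1)$ for every $n$ (a constant sequence, hence trivially both increasing and decreasing). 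Then $\mathcal G_t\subset\mathcal G^n_t$, so $\mathcal G^n_t\stackrel{w}{\rightarrow}\mathcal G_t$ holds, and every $\mathbb G$-martingale is continuous; yet for $Z=B_1$ one has $M^n_t\equiv B_1$ while $M_t=B_t$, so $\mathbb G^n\not\stackrel{w}{\rightarrow}\mathbb G$. The corresponding results in \cite{Coquet0} carry the additional inclusion $\mathcal G^n_t\subset\mathcal G_t$ (equivalently $\bigvee_n\mathcal G^n_t=\mathcal G_t$ in the increasing case, $\bigcap_n\mathcal G^n_t=\mathcal G_t$ in the decreasing case), which combined with weak convergence forces $\mathcal G^\infty_t=\mathcal G_t$. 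This extra hypothesis \emph{is} satisfied in the paper's application (refining discretisations give $\mathbb G^n\subset\mathbb G$), and with it in place your argument is complete.
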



\subsection{Application to diffusions}

Start with a Brownian filtration $\mathbb F=(\mathcal F_t)_{0\leq t\leq T}$, $\mathcal F_t=\sigma(B_s, s\leq t)$ and consider the stochastic differential equation
$$
dX_t=\sigma(X_t)dB_t+b(X_t)dt
$$
Assume the existence of a unique strong solution $(X_t)_{0\leq t\leq T}$. Indeed, assume the transition density $\pi(t,x,y)$ exists and is twice continuously differentiable in $x$ and continuous in $t$ and $y$. This is guaranteed for example if $b$ and $\sigma$ are infinitely differentiable with bounded derivatives and if the H\"{o}rmander condition holds for any~$x$ (see \cite{BallyTalay}), and we assume that this holds in the sequel. In this case, $\pi$ is even infinitely differentiable. 

We next show how we can expand a filtration dynamically as $t$ increases, via another stochastic process evolving backwards in time.  To this end, define the time reversed process $Z_t=X_{T-t}$, for all $0\leq t\leq T$. Let $\mathbb G=(\mathcal G_t)_{0\leq t<\frac{T}{2}}$ be the smallest right-continuous filtration containing $(\mathcal F_t)_{0\leq t<\frac{T}{2}}$ and to which $(Z_t)_{0\leq t<\frac{T}{2}}$ is adapted.
 We would like to prove that $B$ remains a special semimartingale in $\mathbb G$ and give its canonical decomposition. That $B$ is a $\mathbb G$ semimartingale can be obtained using the usual results from the filtration expansion theory. However, our approach allows us to obtain the decomposition, too. We assume (w.l.o.g) that $T=1$. Introduce the reversed Brownian motion $\tilde{B}_t=B_{1-t}-B_1$ and the filtration $\tilde{\mathbb G}=(\tilde{\mathcal{G}_t})_{0\leq t<\frac{1}{2}}$ defined by
$$
\tilde{\mathcal G}_t=\bigcap_{t<u<\frac{1}{2}}\sigma(B_s, \tilde{B}_s, 0\leq s<u).
$$

\begin{theorem}
Both $B$ and $\tilde{B}$ are $\mathbb G$~semimartingales.
\end{theorem}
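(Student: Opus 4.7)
The plan is to apply Theorem~\ref{T:semimgX} with $M=B$ (a continuous $\mathbb F$-martingale) and expansion process $Z=(X_{1-t})_{0\leq t<1/2}$, viewing the Brownian filtration $\mathbb F$ as the base. Since being a semimartingale is a local property on $[0,1/2)$, it suffices to argue on each compact $[0,T_0]$ with $T_0<1/2$. I would choose a refining dyadic subdivision $\pi_n$ of $[0,T_0]$. For $t\leq T_0$ and $t^n_k\in\pi_n$, the observation time $1-t^n_k$ lies in $[1-T_0,1]\subset(1/2,1]$, strictly in the future of $t$. The Markov property of $X$ together with the smooth transition density $\pi(u,x,y)$ (guaranteed by H\"ormander's condition) then yields that the $\mathcal F_t$-conditional joint density of $(X_{1-t^n_n},\dots,X_{1-t^n_0})$ is a telescoping product of transition densities, absolutely continuous with respect to Lebesgue measure. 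A linear change of variables transfers this to the joint density of the increments of $Z$, verifying Assumption~\ref{A:J}. Continuity of $X$ removes fixed times of discontinuity.

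The main obstacle is the uniform $L^1$-bound $E\int_0^{T_0}|dA^{(n)}_s|\leq K$. Applying Jacod's theorem to the initial enlargement $\mathbb H^{i,n}$ together with It\^o's formula and the Kolmogorov backward equation $\partial_u\pi=\mathcal L_x\pi$, the bracket $\langle p^{i,n}(\cdot),B\rangle$ computes explicitly in terms of $\partial_x\pi$, and the compensator of $B$ on $[t^n_i,t^n_{i+1}]$ takes the form
\[
\int_{t^n_i}^{\cdot}\frac{\partial_x \pi}{\pi}(1-t^n_i-s,X_s,Z_{t^n_i})\,\sigma(X_s)\,ds.
\]
The crucial observation is that the ``time to future observation" $1-t^n_i-s\geq 1-2T_0>0$ stays uniformly bounded below on $[0,T_0]$, avoiding the singularity of $\partial_x\pi/\pi$ near observation times. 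Bally--Talay-type Gaussian estimates give $|\partial_x\pi/\pi|(u,x,y)\leq C(1+|y-x|)/\sqrt{u}$; combined with the lower bound on $u$ and the Gaussian integrability of the diffusion increments, the integrand is uniformly bounded in $L^1$, so that $E\int_0^{T_0}|dA^{(n)}_s|\leq K$ with $K$ independent of $n$. This is the technical heart of the proof: without the strict separation $T_0<1/2$, the logarithmic-derivative estimate would fail. Theorem~\ref{T:semimgX} then gives $B$ as a $\mathbb G$-special semimartingale on $[0,T_0]$, hence on $[0,1/2)$.

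For $\tilde B$: SDE inversion exhibits $\tilde B_t=-\int_{1-t}^1 \sigma(X_u)^{-1}(dX_u-b(X_u)du)$ as a functional of $X$ on $[1-t,1]$, so $\tilde B$ is $\mathbb G$-adapted. The $\mathbb G$-semimartingale property follows by a symmetric application of Theorem~\ref{T:semimgX}: take the natural filtration $\tilde{\mathbb F}$ of $\tilde B$ as the base filtration, and expand by the c\`adl\`ag process $(B_s,Z_s)_{s\geq 0}$, whose expansion of $\tilde{\mathbb F}$ recovers $\mathbb G$ (using that $Z_s$ determines $\tilde B_s$ via SDE inversion). The corresponding Jacod criterion is verified using Brownian-increment independence of $B_t$ from $\tilde{\mathcal F}_s$ (for $s\leq t\leq T_0<1/2<1-s$) together with the transition density of the time-reversed diffusion (Haussmann--Pardoux formula), and the uniform compensator bound follows by identical logarithmic-derivative estimates with the same separation $\geq 1-2T_0$.
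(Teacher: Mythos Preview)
Your route is genuinely different from the paper's, and the comparison is instructive. The paper does \emph{not} invoke Theorem~\ref{T:semimgX} here at all. Instead it argues in three short steps: (a) $(B_t)_{t<1/2}$ and $(\tilde B_t)_{t<1/2}$ are independent Brownian motions in the filtration $\tilde{\mathbb G}$ they jointly generate (Brownian-increment independence); (b) $X_1$ satisfies the classical Jacod criterion with respect to $\tilde{\mathbb G}$, so both $B$ and $\tilde B$ remain semimartingales after initial enlargement by the single random variable $X_1$; (c) the time-reversed SDE $dX_{1-t}=\sigma(X_{1-t})d\tilde B_t+(\sigma\sigma'+b)(X_{1-t})dt$ together with pathwise uniqueness gives $\sigma(Z_s:s\le t)=\sigma(\tilde B_s:s\le t)\vee\sigma(X_1)$, so the initially enlarged filtration coincides with $\mathbb G$. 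This avoids density estimates entirely and uses only the stated hypotheses (smooth coefficients with bounded derivatives and H\"ormander's condition).

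Your approach for $B$ is essentially the machinery the paper deploys later in Theorem~\ref{T:diff}, but note that Theorem~\ref{T:diff} carries the extra hypothesis $E|\partial_x\pi/\pi|(t-s,X_s,X_t)\le\phi(t-s)$ with $\phi$ integrable. You try to bypass this via the separation $1-t^n_i-s\ge 1-2T_0>0$, which is correct and useful, but you still need a pointwise bound of the form $|\partial_x\pi/\pi|(u,x,y)\le C(1+|y-x|)/\sqrt u$. In the paper this bound is only established in the subsequent Corollary, under the \emph{additional} assumptions that $b,\sigma$ are bounded and $\sigma\ge k>0$; it is not an automatic consequence of H\"ormander plus bounded derivatives. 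So your argument for $B$ has a gap at exactly the step you flag as the technical heart.

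For $\tilde B$ your ``symmetric application'' is underspecified. Expanding the natural filtration of $\tilde B$ by the pair $(B,Z)$ and checking Assumption~\ref{A:J} is not straightforward: $Z_s=X_{1-s}$ depends on the whole path $(B_u)_{u\le 1-s}$, which for $s<1/2$ reaches past $1/2$ and is entangled with $\tilde B$ in a non-Markovian way from the $\tilde{\mathbb F}$ viewpoint. The conditional density computation you need is not the same as the one for the forward diffusion, and ``identical logarithmic-derivative estimates'' does not follow by symmetry. The paper's identification $\mathbb G=\tilde{\mathbb G}\vee\sigma(X_1)$ handles $B$ and $\tilde B$ simultaneously with no extra work, which is what your approach loses.
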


\begin{proof}
First, it is well known that $\tilde{B}$ is a Brownian motion in its own natural filtration and $\sigma(B_{1-s}-B_1, 0\leq s<\frac{1}{2})$ is independent from $\sigma(B_s, 0\leq s<\frac{1}{2})$. Therefore $(B_t)_{0\leq t<\frac{1}{2}}$ and $(\tilde{B}_t)_{0\leq t<\frac{1}{2}}$ are independent Brownian motions in $\tilde{\mathbb G}$. Now, given our strong assumptions on the coefficients $b$ and $\sigma$, $X_1$ satisfies Jacod's criterion with respect to $\tilde{\mathbb G}$. Therefore $B$ and $\tilde{B}$ remain semimartingales in $\mathbb H=(\mathcal H_t)_{0\leq t<\frac{1}{2}}$ where $\mathcal H_t=\bigcap_{\frac{1}{2}>u>t}\tilde{\mathcal G}_u\vee \sigma(X_1)$. It only remains to prove that $\mathbb G=\mathbb H$. For this, use Theorem~$V.23$ in \cite{Protter:2005} to get that 
$$
dX_{1-t}=\sigma(X_{1-t})d\tilde{B}_t+(\sigma^{'}(X_{1-t})\sigma(X_{1-t})+b(X_{1-t}))dt
$$
Since $b+\sigma\sigma^{'}$ and $\sigma$ are Lipschitz, $\bigcap_{\frac{1}{2}>u>t}\sigma(X_{1-s}, 0\leq s\leq u)=\bigcap_{\frac{1}{2}>u>t}\sigma(\tilde{B}_s, 0\leq s\leq u)\vee \sigma(X_1)$ and the result follows.
\end{proof}

We apply now our results to obtain the $\mathbb G$~decomposition. 
This is the primary result of this article.

\begin{theorem}\label{T:diff}
Assume there exists a nonnegative function $\phi$ such that $\int_0^1\phi(s)ds <\infty$ and for each $0\leq s < t$,
$$
E\Big(\Big|\frac{1}{\pi}\frac{\partial \pi}{\partial x}(t-s, X_s, X_t)\Big|\Big)\leq \phi(t-s)
$$
Then the process $(B_t)_{0\leq t<\frac{1}{2}}$ is a $\mathbb G$ semimartingale and 
$$
B_t - \int_0^t\frac{1}{\pi}\frac{\partial \pi}{\partial x}(1-2s, X_s, X_{1-s})ds
$$
is a $\mathbb G$~Brownian motion.
\end{theorem}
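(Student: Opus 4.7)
The plan is to apply Theorem~\ref{T:semimgX} with $M = B$ and the expanding process taken to be the time-reversed diffusion $Z_t := X_{1-t}$, $t \in [0,1/2]$, then to identify the limiting drift explicitly. I would choose any refining sequence $(\pi_n)_{n\geq 1}$ of subdivisions of $[0,1/2]$ with mesh tending to zero; since $Z$ is continuous, the ``no fixed times of discontinuity'' alternative of that theorem applies. The generalized Jacod criterion (Assumption~\ref{A:J}) is inherited from the Markov property of $X$: conditionally on $\mathcal F_t$ (hence on $X_t$, which is $\mathcal F_t$-measurable), the joint law of $(X_{1-t_0^n},\ldots,X_{1-t_n^n}, X_{1/2})$ has a density with respect to Lebesgue measure on $\mathbb R^{n+2}$ obtained by telescoping the smooth transition densities $\pi$ forward from $X_t$ along the earliest-first ordering, and a linear change of variables produces the conditional density of the increments required by Assumption~\ref{A:J}.

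Next I would compute $A^{(n)}$ explicitly. For each $i$, the partial density $p^{i,n}_t(u_0,\ldots,u_i)$ factorizes as $\pi(1-t_i^n-t, X_t, Y_i)\,C_i(u_0,\ldots,u_i)$ with $Y_i = u_0+\cdots+u_i$ and $C_i$ independent of $t$: only the leg linking $X_t$ to the earliest future time $1-t_i^n$ carries $t$-dependence. Because $\pi(s,\cdot,y)$ satisfies Kolmogorov's backward equation, It\^o's formula applied to $t \mapsto \pi(1-t_i^n-t, X_t, Y_i)$ makes the $dt$ term cancel and leaves a local martingale with differential $\sigma(X_t)\,\partial_x\pi(1-t_i^n-t, X_t, Y_i)\,dB_t$. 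Dividing by $p^{i,n}$ and substituting $Y_i = X_{1-t_i^n}$ in~(\ref{eq:Ain}) shows that on $[t_i^n, t_{i+1}^n)$ the process $A^{(n)}$ from~(\ref{eq:An}) has Radon-Nikodym derivative $\sigma(X_s)(\partial_x\pi/\pi)(1-t_i^n-s, X_s, X_{1-t_i^n})$.

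The uniform bound $E(\int_0^{1/2}|dA^{(n)}_s|) \leq K$ then follows from the standing hypothesis on $\phi$: after the change of variable $r = 1-t_i^n-s$ on each piece, the sum over $i$ reduces to an integral of $\phi$ over a union of intervals of total length at most $1$, giving a bound by $\int_0^1 \phi(r)\,dr < \infty$ (the factor $\sigma$ is controlled by boundedness of $\sigma$ under the smoothness/H\"ormander assumptions, or by a localization argument). The hypothesis $E(\sup_{s\leq 1/2}|B_s|) < \infty$ is immediate from Doob's $L^2$ inequality. Theorem~\ref{T:semimgX}(ii) now yields the $\mathbb G$-canonical decomposition $B = N + A$ with $N$ a $\mathbb G$-martingale and $\int_0^{1/2}|dA_s|$ integrable.

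It remains to identify $A$ with the stated integral and to show $N$ is a Brownian motion. By continuity of $X$ and smoothness of $\pi$, the piecewise integrand in $A^{(n)}$ converges pointwise to $\sigma(X_s)(\partial_x\pi/\pi)(1-2s, X_s, X_{1-s})$ on $[0, 1/2)$; the $\phi$-domination upgrades this to $L^1$ convergence of $A^{(n)}_t$, forcing $A_t$ to agree with $\int_0^t \sigma(X_s)(\partial_x\pi/\pi)(1-2s, X_s, X_{1-s})\,ds$ (matching the drift in the theorem's statement up to the transport factor $\sigma$ inherent in the SDE). Since $A$ has finite variation, $\langle N \rangle_t = \langle B \rangle_t = t$, and $N$ is a continuous $\mathbb G$-martingale, so L\'evy's characterization identifies $N$ as a $\mathbb G$-Brownian motion. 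The main obstacle, in my view, is this last identification step: one must justify the passage to the limit in the Riemann-type sum defining $A^{(n)}$ uniformly in $n$, and this is precisely where the $\phi$-hypothesis is used most essentially.
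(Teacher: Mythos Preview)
Your overall strategy coincides with the paper's: apply Theorem~\ref{T:semimgX} to $M=B$ with the continuous process $Z_t=X_{1-t}$, verify Assumption~\ref{A:J} through the Markov property of $X$ and smoothness of $\pi$, compute $A^{(n)}$ explicitly, bound its total variation by $\int_0^1\phi$, and pass to the limit. Two points deserve attention.

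\medskip
\emph{The $\sigma$ factor.} Your It\^o/backward-equation computation yields the integrand $\sigma(X_s)\,(\partial_x\pi/\pi)(1-t_i^n-s,X_s,X_{1-t_i^n})$, while the paper's proof and the theorem's statement both omit the factor $\sigma(X_s)$. Your formula is what It\^o's rule actually produces from $dX_s=\sigma(X_s)\,dB_s+b(X_s)\,ds$, so this is really a wrinkle in the paper's presentation. However, once the $\sigma$ is present, the variation bound you need is on $E\big[|\sigma(X_s)(\partial_x\pi/\pi)(\cdot)|\big]$, and the stated $\phi$-hypothesis only controls $E\big[|(\partial_x\pi/\pi)(\cdot)|\big]$. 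Your appeal to ``boundedness of $\sigma$'' is not among the hypotheses of Theorem~\ref{T:diff}; that assumption is only introduced later, in the Corollary. So either the statement tacitly carries the $\sigma$ factor throughout, or $\sigma$ must be bounded; you should flag which reading you adopt rather than absorb it silently.

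\medskip
\emph{The identification of the limiting drift.} This is the genuine gap. You show that $A^{(n)}$ converges (pointwise, then in $L^1$ via $\phi$-domination) to the explicit integral, call it $\tilde A$, and then assert this ``forces $A_t$ to agree with'' $\tilde A_t$. But the $A$ produced by Theorem~\ref{T:semimgX}(ii) is the $\mathbb G$-predictable finite-variation part of $B$; nothing in your argument connects $\lim_n A^{(n)}$ to that object. Knowing $A^{(n)}\to\tilde A$ only tells you $B-\tilde A=\lim_n(B-A^{(n)})$, and a limit of $\mathbb G^n$-martingales need not be a $\mathbb G$-martingale. The paper closes this by establishing the weak convergence of filtrations $\mathbb G^n\stackrel{w}{\to}\mathbb G$: since the subdivisions are refining and every $\mathbb G$-martingale is continuous, Lemma~\ref{L:conv2} applies, which feeds into Corollary~\ref{cor:semimg}(iii) (equivalently Theorem~\ref{T:memin}) and gives $(B-A^{(n)},A^{(n)})\to(N,A)$ in probability. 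Only then does $\tilde A=A$ follow. You correctly identified this step as the main obstacle, but the $\phi$-domination alone does not resolve it; the filtration-convergence ingredient is essential.
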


\begin{proof}
Since the process $Z_t$ is a c\`adl\`ag process with no fixed times of discontinuity, we can apply Theorem \ref{T:semimgX}. First we prove that $(Z_t)_{0\leq t<\frac{1}{2}}$ and $(\mathcal F_t)_{0\leq t<\frac{1}{2}}$ satisfy Assumption~\ref{A:J}. Let $(\pi_n)_{n\geq 1}=(\{t^n_i\})_{n\geq 1}$ be a refining  sequence of subdivisions of $[0,\frac{1}{2}]$ whose mesh tends to zero. We will do more and compute directly the conditional distributions of $(Z_{t^n_0}, Z_{t^n_1}-Z_{t^n_0}, \ldots, Z_{t^n_i}-Z_{t^n_{i-1}})$ for any $1\leq i\leq n+1$. Pick such $i$ and let $0\leq t<\frac{1}{2}$ and $(z_0,\ldots, z_i)\in~\mathbb R^{i+1}$.
\begin{align*}
P(Z_{t^n_0}&\leq z_0, Z_{t^n_1}-Z_{t^n_0}\leq z_1, \ldots, Z_{t^n_i}-Z_{t^n_{i-1}}\leq z_i\mid\mathcal F_t)\\
&=P(X_1\leq z_0, X_1-X_{1-t^n_1}> -z_1, \ldots, X_{1-t^n_{i-1}}-X_{1-t^n_i}> -z_i\mid\mathcal F_t)\\
&=E\Big(\prod_{k=1}^{i-1}1_{\{X_{1-t^n_k}-X_{1-t^n_{k+1}}\geq z_{k+1}\}} P\big(X_{1-t^n_1}-z_1\leq X_1\leq z_0\mid \mathcal F_{1-t^n_1}\big)\mid \mathcal F_t\Big)\\
&=E\Big(\prod_{k=1}^{i-1}1_{\{X_{1-t^n_k}-X_{1-t^n_{k+1}}\geq z_{k+1}\}} \int_{X_{1-t^n_1}-z_1}^{\infty} 1_{\{u_1\leq z_0\}}P_{X_{1-t^n_1}}(t^n_1,u_1)du_1\mid \mathcal F_t\Big)\\
&=E\Big(\prod_{k=1}^{i-1}1_{\{X_{1-t^n_k}-X_{1-t^n_{k+1}}\geq z_{k+1}\}} \int_{-z_1}^{\infty} 1_{\{v_1\leq z_0-X_{1-t^n_1}\}}P_{X_{1-t^n_1}}(t^n_1,v_1+X_{1-t^n_1})dv_1\mid \mathcal F_t\Big)
\end{align*}
Repeating the same technique and conditioning successively w.r.t $\mathcal F_{1-t^n_2}, \ldots, \mathcal F_{1-t^n_i}$ gives
\begin{align*}
P(Z_{t^n_0}&\leq z_0, Z_{t^n_1}-Z_{t^n_0}\leq z_1, \ldots, Z_{t^n_i}-Z_{t^n_{i-1}}\leq z_i\mid\mathcal F_t)=E\Big(\int_{-z_i}^{\infty}\cdots\int_{-z_1}^{\infty}\\
&1_{\{\sum_{k=1}^iv_k\leq z_0-X_{1-t^n_i}\}}\prod_{k=1}^iP_{X_{1-t^n_i}+\sum_{j=k+1}^iv_j}(t^n_k-t^n_{k-1},\sum_{l=k}^iv_l+X_{1-t^n_i})dv_1\ldots dv_i\mid \mathcal F_t\Big)\\
&=\int_{-\infty}^{\infty}\int_{-z_i}^{\infty}\cdots\int_{-z_1}^{\infty}1_{\{u+\sum_{k=1}^iv_k\leq z_0\}}P_{X_t}(1-t^n_i-t,u)\\
&\qquad \qquad \qquad \qquad\prod_{k=1}^iP_{u+\sum_{j=k+1}^iv_j}(t^n_k-t^n_{k-1},u+\sum_{l=k}^iv_l)dv_1\ldots dv_idu
\end{align*}
Fubini's Theorem implies then
\begin{align*}
P(Z_{t^n_0}&\leq z_0, Z_{t^n_1}-Z_{t^n_0}\leq z_1, \ldots, Z_{t^n_i}-Z_{t^n_{i-1}}\leq z_i\mid\mathcal F_t)=\int_{-z_i}^{\infty}\cdots\int_{-z_1}^{\infty}\int_{-\infty}^{z_0-\sum_{k=1}^iv_k}\\
&P_{X_t}(1-t^n_i-t,u)\prod_{k=1}^iP_{u+\sum_{j=k+1}^iv_j}(t^n_k-t^n_{k-1},u+\sum_{l=k}^iv_l)dudv_1\ldots dv_i
\end{align*}
Since the transition density $\pi(t,x,y)=P_x(t,y)$ is twice continuously differentiable in $x$ by assumption, it is straightforward to check that 
$$
p^{i,n}_t(z_0,\ldots,z_i)=\prod_{k=1}^i\pi(t^n_k-t^n_{k-1}, \sum_{j=0}^kz_j, \sum_{j=0}^{k-1}z_j) \pi(1-t^n_i-t, X_t, \sum_{j=0}^iz_j)
$$
One then readily obtains
$$
d\langle p^{i,n}_{.}(z_0, \ldots, z_i), B_{.}\rangle _s = \frac{1}{\pi}\frac{\partial \pi}{\partial x}(1-t^n_i-s, X_s, \sum_{j=0}^iz_k)p^{i,n}_s(z_0, \ldots, z_i)ds
$$
Hence by taking the local martingale $M$ in (\ref{eq:Ain}) to be $B$, we get
$$
A^{i,n}_t=\int_0^t\frac{1}{\pi}\frac{\partial \pi}{\partial x}(1-t^n_i-s, X_s, X_{1-t^n_i})ds
$$
Now equation (\ref{eq:An}) becomes
\begin{align*}
A^{(n)}_t=\sum_{i=0}^n1_{\{t^n_i\leq t<t^n_{i+1}\}}\Big(&\sum_{k=0}^{i-1}\int_{t^n_k}^{t^n_{k+1}}\frac{1}{\pi}\frac{\partial \pi}{\partial x}(1-t^n_k-s, X_s, X_{1-t^n_k})ds\\
&+\int_{t^n_i}^t\frac{1}{\pi}\frac{\partial \pi}{\partial x}(1-t^n_i-s, X_s, X_{1-t^n_i})ds\Big)
\end{align*}
In order to apply Theorem \ref{T:semimgX}, it only remains to prove that $E(\int_0^{\frac{1}{2}}|dA^{(n)}_s|)\leq K$ for some constant $K$ independent from $n$. The finite constant $K=\int_0^1\phi(s)ds$ works since
\begin{align*}
E\big(\int_0^{\frac{1}{2}}|dA^{(n)}_s|\big)&\leq\sum_{i=0}^n\int_{t^n_k}^{t^n_{k+1}}E\Big|\frac{1}{\pi}\frac{\partial \pi}{\partial x}(1-t^n_k-s, X_s, X_{1-t^n_k})\Big|ds\\
&\leq \sum_{i=0}^n\int_{t^n_k}^{t^n_{k+1}}\phi(1-t^n_k-s)ds = \sum_{i=0}^n\int_{1-t^n_k-t^n_{k+1}}^{1-2t^n_{k}}\phi(s)ds\\
&\leq \sum_{i=0}^n\int_{1-2t^n_{k+1}}^{1-2t^n_{k}}\phi(s)ds = \int_0^1\phi(s)ds
\end{align*}
This proves again that $B$ is a $\mathbb G$~semimartingale. Now $A^{(n)}$ converges in probability to the process $A$ given by
$$
A_t=\int_0^t\frac{1}{\pi}\frac{\partial \pi}{\partial x}(1-2s, X_s, X_{1-s})ds
$$
Since all $\mathbb G$~martingales are continuous, the comment following Theorem \ref{T:semimgX} ensures that B-A is a $\mathbb G$~martingale. Its quadratic variation is $t$, therefore it is a $\mathbb G$~Brownian motion.
\end{proof}

In the Brownian case, the result in Theorem \ref{T:diff} can also be obtained using the usual theory of initial expansion of filtration. Assume $b=0$ and $\sigma=1$, i.e.~$Z=B_{1-\cdot}$ and $X=B$.

\begin{theorem}\label{T:Brownian}
The process $B$ is a $\mathbb G$~semimartingale and
$$
B_t - \int_0^t\frac{B_{1-s}-B_s}{1-2 s}ds, \qquad 0\leq t<\frac{1}{2}
$$
is a $\mathbb G$~Brownian motion.
\end{theorem}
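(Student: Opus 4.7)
The plan is to deduce Theorem \ref{T:Brownian} as a direct specialization of Theorem \ref{T:diff} to the case $b=0$, $\sigma=1$, so that $X = B$ is a standard Brownian motion and the transition density is the Gaussian kernel
$$
\pi(t,x,y) = \frac{1}{\sqrt{2\pi t}}\exp\left(-\frac{(y-x)^2}{2t}\right).
$$
A short computation gives $\frac{1}{\pi}\frac{\partial\pi}{\partial x}(t,x,y)=\frac{y-x}{t}$, so the candidate drift in Theorem \ref{T:diff} becomes precisely
$$
\frac{1}{\pi}\frac{\partial\pi}{\partial x}(1-2s,X_s,X_{1-s}) = \frac{B_{1-s}-B_s}{1-2s},
$$
which matches the statement. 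The Brownian setting also satisfies the smoothness and H\"ormander hypotheses trivially, so the transition density is indeed smooth on the required domain.

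The only substantive step is to produce the integrability dominator $\phi$ required by Theorem \ref{T:diff}. For $0\leq s<t\leq 1$ we have
$$
E\left|\frac{1}{\pi}\frac{\partial\pi}{\partial x}(t-s,B_s,B_t)\right| = \frac{E|B_t-B_s|}{t-s} = \frac{1}{t-s}\sqrt{\frac{2(t-s)}{\pi}} = \sqrt{\frac{2}{\pi(t-s)}},
$$
so taking $\phi(u)=\sqrt{2/(\pi u)}$ works, and $\int_0^1\phi(s)\,ds = 2\sqrt{2/\pi}<\infty$. This verifies the hypothesis of Theorem \ref{T:diff} with a finite integral, so that theorem applies and yields that $B$ is a $\mathbb{G}$-semimartingale with the announced decomposition, and that the residual process is a $\mathbb{G}$-Brownian motion (its quadratic variation being $t$ by L\'evy's characterization, as already noted in the proof of Theorem \ref{T:diff}).

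I do not anticipate any serious obstacle: there are no subtle measurability or convergence issues beyond those already handled in the general Theorem \ref{T:diff}, and the Gaussian case makes all calculations explicit. The only item to be careful about is the singularity of $\phi$ at $0$, which is integrable (an inverse square-root), so the bound $K = 2\sqrt{2/\pi}$ suffices uniformly in $n$ along any refining subdivision of $[0,\tfrac{1}{2}]$. As a sanity check, one may (as the paper hints) also derive Theorem \ref{T:Brownian} via classical initial enlargement, using the Gaussian conditional density of $B_{1-t}$ given $\mathcal{F}_t$ in the filtration initially enlarged by $B_1$, and recovering the same drift $\frac{B_{1-s}-B_s}{1-2s}$ through Jacod's formula; but the specialization of Theorem \ref{T:diff} is more economical.
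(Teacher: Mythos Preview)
Your proposal is correct and coincides with the paper's own \emph{second} proof of Theorem~\ref{T:Brownian}: the same Gaussian kernel computation $\frac{1}{\pi}\frac{\partial\pi}{\partial x}(t,x,y)=\frac{y-x}{t}$, the same dominator $\phi(u)=\sqrt{2/(\pi u)}$, and the same appeal to Theorem~\ref{T:diff}. The paper also supplies a first, more classical proof via initial enlargement with $B_{1/2}$ followed by an independent enlargement and a Brownian-bridge projection onto $\mathbb G$ --- precisely the alternative route you sketch as a sanity check --- so your one-paragraph remark already anticipates both arguments the paper gives.
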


\begin{proof}
Introduce the filtration $\mathbb H^1=(\mathcal H_t)_{0\leq t<\frac{1}{2}}$ obtained by initially expanding $\mathbb F$ with~$B_{\frac{1}{2}}$. 
$$
\mathcal H^1_t = \bigcap_{u>t} F_u \vee \sigma(B_{\frac{1}{2}})
$$
We know that $B$ remains an $\mathbb H^1$~semimartingale and 
$$
M_t:=B_t-\int_0^t\frac{B_{\frac{1}{2}}-B_s}{\frac{1}{2}-s}ds, \qquad 0\leq t<\frac{1}{2}
$$
is an $\mathbb H^1$~Brownian motion. Now expand initially $\mathbb H^1$ with the independent $\sigma$-field $\sigma (B_v - B_{\frac{1}{2}}, \frac{1}{2}<v\leq 1)$ to obtain $\mathbb H$ i.e.
$$
\mathcal H_t=\bigcap_{u>t} H^1_u\vee \sigma(B_v - B_{\frac{1}{2}}, \frac{1}{2}<v\leq 1)
$$
Obviously $(M_t)_{0\leq t<\frac{1}{2}}$ remains an $\mathbb H$~Brownian motion. But $\mathcal G_t \subset \mathcal H_t$, for all $0\leq t<\frac{1}{2}$, hence the optional projection of $M$ onto $\mathbb G$, denoted $^oM$ in the sequel, is again a martingale (see \cite{ProtterFollmer}), i.e.
$$
^oM_t=B_t-E(\int_0^t\frac{B_{\frac{1}{2}}-B_s}{\frac{1}{2}-s}ds\mid\mathcal G_t), \qquad 0\leq t<\frac{1}{2}
$$
is a $\mathbb G$~martingale. Also, $N_t:=E(\int_0^t\frac{B_{\frac{1}{2}}-B_s}{\frac{1}{2}-s}ds\mid\mathcal G_t)-\int_0^tE(\frac{B_{\frac{1}{2}}-B_s}{\frac{1}{2}-s}\mid\mathcal G_s)ds$ is a $\mathbb G$~local martingale, see for example \cite{KLP} for a proof. So
$$
B_t\ =\ ^oM_t+N_t+\int_0^tE(\frac{B_{\frac{1}{2}}-B_s}{\frac{1}{2}-s}\mid\mathcal G_s)ds
$$
We prove now the theorem using properties of the Brownian bridge. Recall that for any $0\leq T_0 <T_1<\infty$,
\begin{equation}\label{eq:condLaw}
\mathcal L\Big( (B_t)_{T_0\leq t\leq T_1} \mid B_s, s\notin ]T_0, T_1[\Big)=\mathcal L\Big( (B_t)_{T_0\leq t\leq T_1} \mid B_{T_0}, B_{T_1} \Big)
\end{equation}
and
$$
\mathcal L\Big( (B_t)_{T_0\leq t\leq T_1} \mid B_{T_0}=x, B_{T_1}=y \Big)=\mathcal L\Big(x+\frac{t-T_0}{T_1-T_0}(y-x)+(Y^{W,T_1-T_0}_{t-T_0})_{T_0\leq t\leq T_1}\Big)
$$
where $W$ is a generic standard Brownian motion and $Y^{W, T_1-T_0}$ is the standard Brownian bridge on $[0, T_1-T_0]$. It follows that for all $ T_0\leq t\leq T_1$ and all $x$ and $y$,
\begin{equation}\label{eq:bridge}
E(B_t \mid B_{T_0} = x, B_{T_1} = y)=\frac{T_1-t}{T_1-T_0}x+\frac{t-T_0}{T_1-T_0}y
\end{equation}
For any $0\leq s< t <\frac{1}{2}$, if follows from (\ref{eq:condLaw}) and (\ref{eq:bridge}) that
$$
E(B_{\frac{1}{2}}-B_s\mid\mathcal G_s)=\frac{1}{2}(B_{1-s}-B_s)
$$
Therefore
$$
B_t-\int_0^t\frac{B_{1-s}-B_s}{1-2s}ds\ =\ ^oM_t+N_t
$$
is a $\mathbb G$~local martingale. Since the quadratic variation of the $\mathbb G$~local martingale $B-A$ is~$t$, Levy's characterization of Brownian motion ends the proof.
\end{proof}

In the immediately previous proof, the properties of the Brownian bridge allow us to compute explicitly the decomposition of $B$ in $\mathbb G$. Our method obtains both the semimartingale property and the decomposition simultaneously and generalizes to diffusions, for which the computations as in the proof of Theorem \ref{T:Brownian} are hard. We provide a shorter proof for Theorem \ref{T:Brownian} based on Theorem \ref{T:diff}.  This illustrated that, given Theorem~\ref{T:diff}, even in the Brownian case our method is shorter, simpler, and more intuitive.

\begin{proof}\textbf{[Second proof of Theorem \ref{T:Brownian}]} In the Brownian case, $\pi(t,x,y)=\frac{1}{\sqrt{2\pi t}}e^{-\frac{(y-x)^2}{2t}}$. Therefore $\frac{1}{\pi}\frac{\partial \pi}{\partial x}(t,x,y)=\frac{y-x}{t}$. Hence
$$
E\Big(\big|\frac{1}{\pi}\frac{\partial \pi}{\partial x}\big|(t-s, B_s, B_t)\Big)\leq \frac{1}{t-s}E(|B_t-B_s|)=\sqrt{\frac{2}{\pi}}\frac{1}{\sqrt{t-s}}
$$
and $\phi(x)=\sqrt{\frac{2}{\pi}}\frac{1}{\sqrt{x}}$ is integrable in zero. From the closed formula for the transition density, $A_t=\int_0^t\frac{B_{1-s}-B_s}{1-2 s}ds$. Therefore $B$ is a $\mathbb G$~semimartingale, and $B-A$ is a $\mathbb G$~Brownian motion by Theorem~\ref{T:diff}.
\end{proof}

This property satisfied by Brownian motion is inherited by diffusions whose parameters $b$ and $\sigma$ satisfy some boundedness assumptions. We add the extra assumptions that $b$ and $\sigma$ are bounded and $k\leq\sigma(x)$ for some $k>0$. The following holds.

\begin{corollary}
The process $(B_t)_{0\leq t<\frac{1}{2}}$ is a $\mathbb G$ semimartingale and 
$$
B_t - \int_0^t\frac{1}{\pi}\frac{\partial \pi}{\partial x}(1-2s, X_s, X_{1-s})ds
$$
is a $\mathbb G$~Brownian motion.
\end{corollary}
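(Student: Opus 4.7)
My plan is to derive the corollary as a direct application of Theorem \ref{T:diff}. All that needs to be checked is the existence of a nonnegative integrable function $\phi$ on $[0,1]$ satisfying
\[
E\Bigl(\Bigl|\tfrac{1}{\pi}\tfrac{\partial \pi}{\partial x}(t-s, X_s, X_t)\Bigr|\Bigr)\le \phi(t-s), \qquad 0\le s<t\le 1.
\]
Once this is in hand, Theorem \ref{T:diff} delivers both the semimartingale property of $B$ in $\mathbb G$ and the stated Brownian decomposition without further work.

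The first step is to rewrite the expectation by conditioning on $X_s$ and using the Markov property: $X_t$ given $\mathcal F_s$ has density $y\mapsto \pi(t-s, X_s, y)$, so
\[
E\Bigl[\Bigl|\tfrac{1}{\pi}\tfrac{\partial \pi}{\partial x}(t-s, X_s, X_t)\Bigr|\,\Big|\,\mathcal F_s\Bigr]
= \int_{\mathbb R}\bigl|\partial_x \pi(t-s, X_s, y)\bigr|\,dy,
\]
so the density itself cancels out and we are reduced to an $L^1$-in-$y$ estimate on $|\partial_x\pi|$.

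The second step is to invoke the Gaussian-type upper bound on the spatial derivative of the transition density. Under the assumptions that $b,\sigma$ are smooth with bounded derivatives, $\sigma\ge k>0$, and with $b,\sigma$ themselves now bounded, the Bally--Talay estimates (and the classical Aronson-type parabolic estimates that underlie them) give constants $C,c>0$ such that for $0<u\le 1$,
\[
|\partial_x \pi(u, x, y)|\le \frac{C}{u}\,\exp\Bigl(-\frac{c\,(y-x)^2}{u}\Bigr).
\]
Integrating in $y$ yields $\int_{\mathbb R}|\partial_x\pi(u,x,y)|\,dy\le C'/\sqrt{u}$, uniformly in $x$. Taking outer expectation and setting $\phi(u):=C'/\sqrt{u}$, we obtain the required bound, and clearly $\int_0^1 \phi(s)\,ds<\infty$.

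The third and final step is to plug this $\phi$ into Theorem \ref{T:diff}, which concludes the proof. The main (and essentially only) obstacle is producing the Gaussian upper bound on $|\partial_x \pi|$, but under the present hypotheses this is exactly the output of the density estimates from Bally--Talay cited earlier in the paper, so no new analytical work is required here.
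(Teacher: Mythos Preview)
Your argument is correct and reaches the same conclusion, but the route differs from the paper's. The paper does not invoke Aronson--type heat-kernel derivative bounds; instead it applies the Lamperti transform $s(x)=\int_0^x \sigma(y)^{-1}\,dy$, so that $Y_t=s(X_t)$ solves $dY_t=\mu(Y_t)\,dt+dB_t$, and then uses the semi-closed formula for $\pi(t,x,y)$ from \cite{Zmirou} to compute $\tfrac{1}{\pi}\partial_x\pi$ explicitly. From this explicit expression the paper extracts a pointwise bound of the form $M\bigl(1+\tfrac{|s(y)-s(x)|}{t}\bigr)$ and then controls $E|s(X_t)-s(X_s)|=E|Y_t-Y_s|$ directly from the SDE for $Y$, obtaining $\phi(u)=C\bigl(1+u^{-1/2}\bigr)$. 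Your approach is slicker in one respect: the conditioning step that cancels the $1/\pi$ factor reduces the problem to an $L^1(dy)$ estimate on $|\partial_x\pi|$, and then a single black-box Gaussian derivative estimate finishes the job without any explicit formula for $\pi$. The paper's approach, by contrast, is more self-contained---it never needs the general gradient heat-kernel bound, only the concrete density formula and an elementary moment estimate---at the price of being tied to the one-dimensional Lamperti reduction. Either way the verification of the hypothesis of Theorem~\ref{T:diff} yields the same $\phi(u)\sim C u^{-1/2}$, and the corollary follows.
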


\begin{proof}
Introduce the following quantities
$$
s(x)=\int_0^x\frac{1}{\sigma(y)}dy \qquad g=s^{-1} \qquad \mu=\frac{b}{\sigma}\circ g - \frac{1}{2}\sigma^{'}\circ g
$$
The process $Y_t=s(X_t)$ satisfies the SDE $dY_t=\mu(Y_t)dt+dB_t$. The transition density is known in semi-closed form (see \cite{Zmirou}) and given by
$$
\pi(t,x,y)=\frac{1}{\sqrt{2\pi t}}\frac{1}{\sigma(y)}e^{-\frac{(s(y)-s(x))^2}{2t}}U_t(s(x),s(y))
$$
where $U_t(x,y)=H_t(x,y)e^{A(y)-A(x)}$, $H_t(x,y)=E(e^{-t\int_0^1h(x+z(y-x)+\sqrt{t}W_z)dz})$, $W$ is a Brownian bridge, $A$ a primitive of $\mu$ and $h=\frac{1}{2}(\mu^2+(\mu^{'})^2)$. It is then straightforward to compute the ratio
\begin{align*}
\frac{1}{\pi}\frac{\partial \pi}{\partial x}(t,x,y)&=\frac{1}{\sigma(x)}\Big(\frac{s(y)-s(x)}{t}+\frac{1}{U_t(s(x),s(y))}\frac{\partial U_t}{\partial x}(s(x),s(y))\Big)\\
&=\frac{1}{\sigma(x)}\Big(\frac{s(y)-s(x)}{t}+\frac{1}{H_t(s(x),s(y))}\frac{\partial H_t}{\partial x}(s(x),s(y))-\mu(s(x))\Big)
\end{align*}
From the boundedness assumptions of $b$ and $\sigma$ and their derivatives, there exists a constant $M$ such that $|\frac{1}{\pi}\frac{\partial \pi}{\partial x}(t,x,y)|\leq M(1+\frac{|s(y)-s(x)|}{t})$. Hence, for $0\leq s<t$
$$
E\Big|\frac{1}{\pi}\frac{\partial \pi}{\partial x}(t-s,X_s,X_t)\Big|\leq M\big(1+E\Big|\frac{s(X_t)-s(X_s)}{t-s}\Big|\big)
$$
But $s(X_t)-s(X_s)=Y_t-Y_s=\int_s^t\mu(Y_u)du+W_t-W_s$. But $\mu$ is bounded, hence
$$
E|s(Y_t)-s(Y_s)|\leq ||\mu||_{\infty}|t-s|+E|W_t-W_s|=||\mu||_{\infty}|t-s|+\sqrt{\frac{2}{\pi}}\sqrt{t-s}
$$
This proves the existence of a constant $C$ such that
$$
E\Big|\frac{1}{\pi}\frac{\partial \pi}{\partial x}(t-s,X_s,X_t)\Big|\leq C (1+\frac{1}{\sqrt{t-s}})
$$
Since $\phi(x)=C (1+\frac{1}{\sqrt{x}})$ is integrable in zero, we can apply Theorem \ref{T:diff} and conclude.
\end{proof}

\subsection{Application to stochastic volatility models}

Let $(\Omega, \mathcal H, P, \mathbb H)$ be a filtered probability space. Assume that we are given an $\mathbb H$~Brownian motion $W$ and a positive continuous $\mathbb H$ adapted process $\sigma$. Consider the following stochastic volatility model 
$$
dS_t=S_t\sigma_t dW_t
$$
and $\sigma$ is such that $(\sigma, A)$ is Markov w.r.t~its natural filtration with transition density $p_t((u,a),(v,b))$, where $A_t:=\int_0^t\sigma^2_sds$. Define $Z_t=\int_0^t\sigma_sdW_s$, so that $S_t=\mathcal E(Z)_t$ and let $\mathbb F$ be the filtration generated by $S$ and $\sigma$. Then $\mathcal F_t=\bigcap_{u>t}\sigma(Z_s, \sigma_s, s\leq u)$. Since $\langle Z, Z\rangle_t=A_t$, A is $\mathbb F$~adapted. 

We want to expand $(\mathcal F_t)_{0\leq t<\frac{T}{2}}$ progressively with the continuous process $X_t:=\int_{T-t}^T\sigma^2_sds=A_T-A_t$, $0\leq t<\frac{1}{2}$. The process $(X_t)_{0\leq t<\frac{T}{2}}$ satisfies Assumption \ref{A:J} with density
\begin{align*}
\int_{a_1=0}^{\infty}\int_{u_1=0}^{\infty}\ldots\int_{u_{n+1}=0}^{\infty}\prod_{k=1}^np_{t^n_{k+1}-t^n_k}&\big((u_k,A_t+\sum_{i=1}^ka_i),(u_{k+1},A_t+\sum_{i=1}^{k+1}a_i)\big)\\
&p_{\frac{T}{2}-t}\big((\sigma_t, A_t), u_1,a_1+A_t)\big)du_{n+1}\ldots du_1da_1
\end{align*}
along any refining subdivision $\{t^n_k\}$ of $[0,\frac{T}{2}]$. Let $M$ be a continuous $\mathbb F$~martingale such that $\sup_{s}|M_s|$ is integrable. Then $A^{i,n}$ and $A^{(n)}$ can be computed using the formulas in (\ref{eq:Ain}) and (\ref{eq:An}) and the $\mathbb G$~semimartingale property of $M$ will be guaranteed as soon as $E(\int_0^{\frac{T}{2}}|dA^{(n)}_s\vert)\leq K$ for some $K$ and all $n$.

\end{document}